\numberwithin{equation}{section}
\newtheorem{theorem}{Theorem}[section]
\newtheorem{lemma}[theorem]{Lemma}
\newtheorem{corollary}[theorem]{Corollary}
\newtheorem{remark}[theorem]{Remark}
\newtheorem{proposition}[theorem]{Proposition}
\theoremstyle{definition}
\newtheorem{definition}[theorem]{Definition}
\newtheorem{problem}[theorem]{Optimal Extension Problem}
\newcommand{\dd}{\,\mathrm{d}}
\renewcommand{\d}{\mathrm{d}}
\newcommand{\R}{\mathbb{R}}
\newcommand{\N}{\mathbb{N}}
\newcommand{\X}{{\bf X}}
\newcommand{\Y}{{\bf Y}}
\newcommand{\Z}{{\bf Z}}
\newcommand{\var}{\text{-}\mathsf{var}}
\newcommand{\vertiii}[1]{{\left\vert\kern-0.25ex\left\vert\kern-0.25ex\left\vert #1 \right\vert\kern-0.25ex\right\vert\kern-0.25ex\right\vert}}
\title{Optimal extension to Sobolev rough paths}
\author[Liu]{Chong Liu}
\address{Chong Liu, University of Oxford, United Kingdom}
\email{chong.liu@maths.ox.ac.uk}
\author[Pr{\"o}mel]{David J. Pr{\"o}mel}
\address{David J. Pr{\"o}mel, Universit{\"a}t Mannheim, Germany}
\email{proemel@uni-mannheim.de}
\author[Teichmann]{Josef Teichmann}
\address{Josef Teichmann, Eidgen{\"o}ssische Technische Hochschule Z{\"u}rich, Switzerland}
\email{josef.teichmann@math.ethz.ch}
\date{\today}
\begin{document}

\begin{abstract}
  We show that every $\R^d$-valued Sobolev path with regularity~$\alpha$ and integrability~$p$ can be lifted to a weakly geometric rough path in the sense of T.~Lyons with exactly the same regularity and integrability, provided $\alpha >1/p>0$. Moreover, we prove the existence of unique rough path lifts which are optimal w.r.t. strictly convex functionals among all possible rough path lifts given a Sobolev path. This paves a way towards classifying rough path lifts as solutions of optimization problems. As examples, we consider the rough path lift with minimal Sobolev norm and characterize the Stratonovich rough path lift of a Brownian motion as optimal lift w.r.t. a suitable convex functional. Generalizations of the results to Besov spaces are briefly discussed. 
\end{abstract}

\maketitle

\noindent\textbf{Keywords:} Besov space, Brownian motion, convex optimization, Lyons--Victoir extension theorem, Sobolev space, Stratonovich integration, rough path. \\
\textbf{MSC 2020 Classification:} 60L20, 60H05.



\section{Introduction}

The original motivation and central aim of rough path theory is the study of controlled ordinary differential equations
\begin{equation}\label{eq:intro equation}
  \d Y_t = V(Y_t)\dd X_t, \quad Y_0 = y_0, \quad t \in [0,T],
\end{equation}
where $(X_{t})_{t\in [0,T]}$ is a continuous $d$-dimensional driving signal, $y_0 \in \R^e$ is an initial value and $V$ is a smooth map from $ \R^d $ to the space of endomorphisms of $\R^e$. Ordinary differential equations of this type are classical objects as long as the driving signal is at least weakly differentiable with $p$-integrable derivative, that is, $X$ belongs to some Sobolev space $W^{1}_p$. However, it is a delicate problem to extend the solution map $X \mapsto Y$ in a meaningful way to larger spaces of driving signals containing, e.g., sample paths of frequently considered stochastic processes like Brownian motion, see \cite{Lyons1991}.

In order to set up a deterministic solution theory consistent with the classical one but covering many interesting examples of stochastic processes, T.~Lyons~\cite{Lyons1998} realized that the driving signal~$X$ needs not only to take values in $\R^d$ but instead in the step-$N$ free nilpotent group $G^N(\mathbb{R}^d)$ (see Subsection~\ref{subsec:Sobolev rough path} for all necessary details), which equals by the Chow--Rashevskii theorem (see, e.g., \cite{Gromov1996}) to the values of step-$N$ signatures of paths with bounded variation, i.e.,
\begin{equation*}
  G^N(\mathbb{R}^d)= \{S_N(Z)_{0,T} \,:\, Z\in C^{1\var}([0,T];\mathbb{R}^d)\},\quad N\in\mathbb{N},
\end{equation*}
where the step-$N$ signature of a path~$Z$ of bounded variation is given by 
\begin{align*}
  S_N(Z)_{s,t}:=&\bigg (1, \int_{s<u<t}\dd Z_u, \dots, \int_{s<u_1<\dots <u_N<t}\dd Z_{u_1} \otimes \cdots \otimes \d Z_{u_N} \bigg).
\end{align*}
This led to the notion of rough paths: $\X\colon [0,T]\to G^N(\mathbb{R}^d)$ is called a (weakly geometric) rough path if $\X$ is $\alpha$-H\"older continuous or of finite $1/\alpha$-variation, for $\alpha >1/N$. Assuming that the driving signal~$\X$ of the differential equation~\eqref{eq:intro equation} is a rough path, the theory of rough paths initiated by T.~Lyons establishes that~\eqref{eq:intro equation} possesses a unique solution~$Y$ and that the solution map $\X\mapsto Y$ is locally Lipschitz continuous. In the area of stochastic analysis this solution map is then often called It{\^o}--Lyons map. For more details about rough path theory we refer the interested reader to the introductory textbooks~\cite{Lyons2007,Lejay2009,Friz2010,Friz2014}.

This immediately raises the question whether every $\R^d$-valued path~$X$ can be lifted to a weakly geometric rough path~$\X$ in the sense that the projection of $\X$ onto the path-level is $X$. Of course, for sufficiently regular paths this can easily be achieved by, e.g., Young integration \cite{Young1936}, but in general this question becomes rather challenging. The first affirmative answer was given by Lyons and Victoir~\cite{Lyons2007a}. They show, in particular, that an $\R^d$-valued H{\"o}lder continuous path can always be lifted to a H{\"o}lder continuous rough path, which then immediately extends to $p$-variation by a re-parameterization argument. While the approach of Lyons and Victoir is non-constructive, an explicit approach based on so-called Fourier normal ordering was developed by J.~Unterberger~\cite{Unterberger2010}. The latter approach is in a related spirit to the one relying on Hairer's regularity structures~\cite[Section~13]{Friz2014}, see also \cite{Brault2019,Liu2021b}. Recently, Tapia and Zambotti~\cite{Tapia2020} generalized Lyons--Victoir extension theorem to the case of anisotropic H{\"o}lder continuous paths, i.e., allowing each component to have a different regularity. They provide a constructive version of Lyons--Victoir approach by using an explicit form of the Baker--Campbell--Hausdorff formula. Applications of these extension theorems for rough paths can be found in, e.g., \cite{Qian2011,Cass2015}.
 
In this article we shall consider weakly geometric rough paths with respect to a fractional Sobolev topology with regularity~$\alpha$ and integrability~$p$. This seems to be a very natural choice as Sobolev spaces provide a very successful framework to deal with (classical) ordinary and partial differential equations. It turns out that the notion of so-called Sobolev rough paths (Definition~\ref{def:Sobolev geometric rough path}) is not only a feasible object in the context of rough path theory, see~\cite{Liu2021}, but also offers many favourable properties, which are not provided by the commonly used distances on the rough path spaces such as H{\"o}lder or $p$-variation. For instance, let us recall that the classical Sobolev spaces for $p<\infty$ are known to be strictly convex Banach spaces. Furthermore, note that Sobolev spaces cover the H{\"o}lder spaces as a special case by setting $p=\infty$.

Our first contribution is to prove that every path of Sobolev regularity~$\alpha$ and integrability~$p$ can be lifted to a Sobolev rough path provided $1/p <\alpha <1$, which represents a Lyons--Victoirs extension theorem for Sobolev paths with arbitrary low regularity. For this purpose, our approach combines ideas from \cite{Lyons2007a} with a discrete characterization of fractional Sobolev spaces as recently provided by~\cite{Liu2020}. Furthermore, our Sobolev extension theorem provides the existence of a joint Sobolev rough path extension given, for example, two Sobolev rough paths taking values in lower dimensional free nilpotent groups.    

Besides its originally non-constructive nature, a second concern with the Lyons--Victoir extension theorem is the fact that it does of course not provide a canonical way to lift a path. It is even well-known that the existence of one rough path lift ensures the existence of infinitely many rough path lifts and thus one is left with the task to select in some way a canonical one. The usual way to circumvent this issue is to keep (if possible) the probabilistic nature of the driving signal~$X$ in mind. Then, it is possible to select a canonical rough path lift based on some type of stochastic integration, see, e.g., for fractional Brownian motions~\cite{Coutin2002} or for martingales~\cite{Coutin2005}. 

Our second contribution is, using the present Sobolev stetting, to provide the existence of unique rough path lifts which are optimal with respect to a deterministic selection criterion. Let us emphasize that here the strictly convex nature of the Sobolev topology seems to be essential. As a first example, we show that there always exists a unique rough path lift of ``minimal length'' for every given path~$X$ with Sobolev regularity $\alpha \in (1/3,1/2)$, in the sense that there exists a unique rough path lift possessing the minimal Sobolev norm among all rough path lifts of~$X$. As a second example, we prove that the Stratonovich rough path lift of a Brownian motion is almost surely the unique optimal rough path lift with respect to a suitable selection criterion.  

\smallskip
\noindent{\bf Organization of the paper:} In Section~\ref{sec:lifting} the rough path lift of a path with sufficient Sobolev regularity is constructed. In Section~\ref{sec:optimal lift} the existence of a unique optimal rough path lift is proven and examples of optimal rough path lifts are presented. The generalizations of some results to Besov spaces are discussed in Section~\ref{sec:Besov spaces}.

\smallskip
\noindent{\bf Acknowledgment:} C.~Liu and J.~Teichmann gratefully acknowledge support by the ETH foundation. C.~Liu was employed at ETH Zurich when this project was commenced. D.J. Pr\"omel is grateful to Martin Huesmann for inspiring discussions about the problem of lifting a path in a ``optimal'' manner. D.J.~Pr{\"o}mel and J.~Teichmann gratefully acknowledge support by the SNF Project 163014.

\subsection{Basic notation and function spaces}

As usual, $\R$ and $\mathbb{Z}$ are the real numbers resp. the integers, $\N :=\{1,2,\dots \}$ are the natural numbers and we set $\N_0:=\N \cup \{0\}$. For two real functions $a,b$ depending on variables~$x$ one writes $a\lesssim b$ or $a\lesssim_z b$ if there exists a constant $C(z)>0$ such that $a(x) \leq C(z)\cdot b(x)$ for all $x$, and $a\sim b$ if $a\lesssim b$ and $b\lesssim a$ hold simultaneously. A partition~$\pi$ of an interval $[0,T]$ is a collection of finitely many essentially disjoint interval covering $[0,T]$, i.e., $\pi := \{ [t_{k-1},t_{k}]\,:\,0=t_0<t_1<\dots<t_n=T,\, n\in \N\}$. 

Let $(E,d)$ be a metric space and $p\in [1,+\infty)$. The space of all continuous functions $f\colon [0,T]\to E$ is denoted by $C([0,T];E)$. We can define a metric thereon by $ d_\infty(f,g) := \sup_{0\leq t\leq T}d(f(t),g(t)) $. If $ E $ is normed vector space, we set $\|f\|_{\infty}:=\sup_{0\leq t\leq T}\|f(t)\|$. The space $C^{p\var}([0,T];E)$ consists of all continuous functions $f\colon [0,T]\to E$ of finite $p$-variation, i.e.,
\begin{equation*}
  \| f\|_{p\var ;[0,T]}:= \sup_{\pi \subset [0,T]}\bigg( \sum_{[s,t]\in \pi} d(f(s),f(t))^p \bigg)^{1/p}<+\infty,
\end{equation*}
where the supremum is taken over all partitions~$\pi$ of the interval $[0,T]$. For $\alpha \in (0,1)$ and $p\in (1,+\infty)$ the space $W^{\alpha}_p([0,T];E)$ consists of all measurable functions $f\colon [0,T]\to E$ such that
\begin{equation*}
  \iint_{[0,T]^2} \frac{d(f(s),f(t))^p}{|t-s|^{\alpha p+1}} \dd s \dd t<+\infty.
\end{equation*}

\section{Lifting Sobolev paths to Sobolev rough paths}\label{sec:lifting}

This section is devoted to show that every path of suitable Sobolev regularity can be lifted to a weakly geometric rough path \emph{possessing exactly the same Sobolev regularity}, see in particular Subsection~\ref{subsec:Lyons--Victoir lift}.

\subsection{Sobolev rough path}\label{subsec:Sobolev rough path}

Let us start by introducing the notion of Sobolev rough paths and by fixing the basic definitions of rough path theory, following the commonly used notation as, e.g., introduced in~\cite{Friz2010} or \cite{Lyons2007a}. 

Let $\R^d$ be the Euclidean space with norm~$| \cdot |$ for $d\in \N$. The tensor algebra over $\R^d$ is denoted by $T(\R^d) := \bigoplus_{n=0}^\infty (\R^d)^{\otimes n}$ where $\big(\mathbb{R}^d\big)^{\otimes n}$ stands for the $n$-tensor space of $\mathbb{R}^d$ and where we use the convention $(\R^d)^{\otimes 0}:=\mathbb{R}$. $T(\R^d)$ is equipped with the standard addition $+$, tensor multiplication~$\otimes$ and scalar product. We consider it as \emph{a} representation of the free algebra with $d$ indeterminates.

We recall further some group theoretic prerequisites. For any $N \in \N_0$, $T^{N}(\R^d)$ denotes the quotient algebra of $T(\R^d)$ by the ideal $\bigoplus_{m=N+1}^\infty (\R^d)^{\otimes m}$ with the corresponding algebraic structures making it a free $N$-step nilpotent algebra with $d$ indeterminates. On $T^{N}(\R^d)$ such as on $T(\R^d)$ one can define a Lie bracket by the commutator formula
$$
  [a,b] := a\otimes b - b \otimes a,
$$
which makes $T^{N}(\R^d)$ into a Lie algebra. Let $\mathcal{G}^{N}(\R^d)$ be the Lie subalgebra of $T^{N}(\R^d)$ generated by elements in $\R^d$. Note that
\begin{equation*}
  \mathcal{G}^{N}(\R^d) := \bigoplus_{i=1}^n V_i,
\end{equation*}
where $V_1 := \R^d$ and $V_{i+1} := [\R^d,V_i]$. $\mathcal{G}^{N}(\R^d)$ is called the free nilpotent Lie algebra of step~$N$. The exponential, logarithm and inverse function are defined on $T^{(N)}(\R^d)$ by means of their power series. We denote by $G^{N}(\R^d) := \exp(\mathcal{G}^{N}(\R^d))$, which is a connected nilpotent Lie group with the group operator $\otimes$. By construction, $G^{N}(\R^d)$ is the Carnot group with Lie algebra $\mathcal{G}^N(\R^d)$. On $G^N(\mathbb{R}^d)$ usually two types of complete metrics are considered, which generate the standard trace topology inherited from $ T^N(\mathbb{R}^d)$: the first metric is defined by
\begin{equation*}
  \rho(g,h) := \max_{i=1,\dots,N}|\pi_i(g - h)|\quad \text{for} \quad g,h \in G^N(\R^d),
\end{equation*}
where $\pi_i$ denotes the projection from $\bigoplus_{i=0}^N (\R^d)^{\otimes i}$ onto the $i$-th level. We set $|g|:=\rho(g, 1)$ for $g \in G^N(\R^d)$. The second one is the Carnot--Caratheodory metric $d_{cc}$, which is given by
$$
  d_{cc}(g,h) := \|g^{-1} \otimes h\|  \quad \text{for} \quad  g,h \in G^N(\R^d),
$$
where $\| \cdot \| $ is the Carnot--Caratheodory norm defined via \cite[Theorem~7.32]{Friz2010}, cf. \cite[Definition~7.41]{Friz2010}. These two metrics are in general not equivalent (unless $d = 1$) in the sense that there exist constants $C_1,C_2$ such that $C_1 \rho(g,h) \leq d_{cc}(g,h) \leq C_2 \rho(g,h)$ for all $g,h \in G^N(\R^d)$. Instead, it holds that 
$$
  \rho(g,h) \lesssim d_{cc}(g,h) \quad \text{and} \quad  d_{cc}(g,h) \lesssim \rho(g,h)^{1/N}
$$ 
uniformly on bounded sets (w.r.t.~the Carnot--Caratheodory norm), see \cite[Proposition~7.49]{Friz2010}. For more information regarding $G^N(\R^d)$ we refer to \cite[Chapter~7]{Friz2010}. Notice that we do \emph{not} consider the Carnot--Caratheodory metric in the sense of sub-Riemannian geometry in this article.

For $N\in\mathbb{N}$ and a path $Z\in C^{1\var}([0,T];\mathbb{R}^d)$, its $N$-step signature is given by 
\begin{align*}
  S_N(Z)_{s,t}:=&\bigg (1, \int_{s<u<t}\dd Z_u, \dots, \int_{s<u_1<\dots <u_N<t}\dd Z_{u_1} \otimes \cdots \otimes \d Z_{u_N} \bigg) \\
  &\in T^N(\mathbb{R}^d):= \bigoplus_{k=0}^N \big(\mathbb{R}^d\big)^{\otimes k}\subset T(\R^d),
\end{align*}
cf. \cite[Definition~7.2]{Friz2010}. As we have mentioned in the introduction, the corresponding space of endpoints at \emph{one fixed} point in time $T$ of all these lifted paths equals the step-$N$ free nilpotent group (w.r.t. $\otimes$) by the Chow--Rashevskii theorem:
\begin{equation*}
  G^N(\mathbb{R}^d)= \{S_N(Z)_{0,T} \,:\, Z\in C^{1\var}([0,T];\mathbb{R}^d)\}\subset T^N(\mathbb{R}^d).
\end{equation*}

If $\X$ defined on $[0,T]$ is a path taking values in $G^N(\R^d)$, we set $\X_{s,t} := \X_s^{-1} \otimes \X_t$ for any subinterval $[s,t] \subset [0,T]$. For $r \in \R_{+}$, we define 
$$
  [r] := \sup\{n \in \mathbb{Z}\, :\, n \leq r\}\quad \text{and}\quad\lfloor r \rfloor := \sup\{n \in \mathbb{Z}\, :\, n < r\}.
$$
In this article we shall always equip the free nilpotent Lie group $G^N(\R^d)$ with the Carnot--Caratheodory metric~$d_{cc}$, which gives then a metric space. This allows for defining the \textit{fractional Sobolev (semi)-norm} for a path $\X\colon [0,T] \to G^N(\R^d)$ by 
\begin{equation*}
  \| \X \|_{W^{\alpha}_p} := \Big(\iint_{[0,T]^2}\frac{d_{cc}(\X_s,\X_t)^p}{|t-s|^{\alpha p + 1}} \dd s \dd t\Big)^{\frac{1}{p}}
\end{equation*}
for $\alpha \in (0,1)$ and $p\in (1,+\infty)$. Note that, for a continuous path $\X\colon [0,T] \to G^N(\R^d)$ and $T=1$, the fractional Sobolev (semi)-norm can be equivalently defined in a discrete way by 
\begin{equation}\label{eq:discrete Sobolev norm}
  {\|\X\|}_{W^{\alpha}_p,(1)} :=\bigg( \sum_{j \geq 0} 2^{j( \alpha p - 1)} \sum_{m=0}^{2^j-1}  d_{cc} \big(\X_{ \frac{m}{2^j}},\X_{\frac{m+1}{2^j}} \big)^p \bigg)^{1/p} <+\infty,
\end{equation}
see \cite[Theorem~2.2]{Liu2020}. Furthermore, we use
\begin{equation*}
  | \X |_{W^{\alpha}_p} := \Big(\iint_{[0,T]^2}\frac{\rho(\X_s,\X_t)^p}{|t-s|^{\alpha p + 1}} \dd s \dd t\Big)^{\frac{1}{p}}
\end{equation*}
for $\alpha \in (0,1)$ and $p\in (1,+\infty)$. The Sobolev topology leads naturally to the notion of (fractional) Sobolev rough paths.

\begin{definition}[Sobolev rough path]\label{def:Sobolev geometric rough path}
  Let $\alpha \in (0,1)$ and $p\in (1,+\infty)$ be such that $\alpha > 1/p$. The space $W^{\alpha}_p([0,T];G^{[\frac{1}{\alpha}]}(\mathbb{R}^d))$ consists of all paths $\X\colon[0,T]\to G^{[\frac{1}{\alpha}]}(\mathbb{R}^d) $ such that $\| \X \|_{W^{\alpha}_p} <+\infty$, and is called \textit{weakly geometric Sobolev rough path space}. Every element $\X$ in the space $W^{\alpha}_p([0,T];G^{[\frac{1}{\alpha}]}(\mathbb{R}^d))$ is called a \textit{weakly geometric rough path of Sobolev regularity} $(\alpha,p)$ or in short \textit{Sobolev rough path}. We say that $\X\in W^{\alpha}_p([0,T];G^{[\frac{1}{\alpha}]}(\mathbb{R}^d))$ is a (weakly geometric) \textit{rough path lift} of~$X$ if $\pi_{1}(\X) = X$ for a $\R^d$-valued continuous path $X\colon [0,T]\to \R^d$.
\end{definition}

\begin{remark}
  With the parameters $\alpha \in (0,1)$ and $p\in (1,+\infty)$ such that $\alpha > 1/p$ every weakly geometric rough path of Sobolev regularity $(\alpha,p)$ is continuous. Indeed, an application of the Garsia--Rodemich--Rumsey inequality, see e.g.~\cite[Theorem~A.1]{Friz2010}, implies the existence of a constant $C > 0$ such that
  \begin{equation*}
    d_{cc}(\X_s,\X_t) \leq C {|t-s|}^{\alpha-\frac{1}{p}} 
  \end{equation*}
  for all $s,t \in [0,T]$ and all $\X \in W^{\alpha}_p([0,T];G^{[\frac{1}{\alpha}]}(\mathbb{R}^d))$. 
\end{remark}

In the sequel, $K$ stands always for a closed normal subgroup of $G^{N}(\R^d)$ with the corresponding Lie algebra $\mathcal{K} \subset \mathcal{G}^N(\R^d)$. Again, by $ \| \cdot \| $ we denote the Carnot--Caratheodory norm on $G^N(\R^d)$. The quotient Lie group $G^{N}(\R^d)/K$ is equipped with the quotient homogeneous norm 
\begin{align*}
  \| \cdot \|_{G^{N}(\R^d)/K} \colon &G^{N}(\R^d)/K \rightarrow \R, \\
  & gK \mapsto \inf_{k \in K}\|g\otimes k\| ,
\end{align*}
which defines a metric $d$ on $G^{N}(\R^d)/K$. The canonical homomorphism from $G^{N}(\R^d)$ onto $G^{N}(\R^d)/K$ is denoted by $\pi_{G^{N}(\R^d), G^{N}(\R^d)/K}$. For more details about Carnot groups and free nilpotent Lie group we refer the reader to \cite[Section~3]{Lyons2007a} and the references therein.

We say that a path $X\colon [0,T] \rightarrow G^N(\R^d)/K$ is of Sobolev regularity $(\alpha,p)$, in notation $X \in W^\alpha_p([0,T];G^N(\R^d)/K)$, if it satisfies that
$$
  \|X\|_{W^\alpha_p} := \Big(\iint_{[0,T]^2}\frac{d(X_s,X_t)^p}{|t-s|^{\alpha p + 1}} \dd s \dd t\Big)^{\frac{1}{p}} < +\infty,
$$
where $d$ is the metric induced by the quotient norm $\| \cdot\|_{G^N(\R^d)/K}$ defined as above. The norm $\|X\|_{W^\alpha_p,(1)}$ is defined as in Formula \eqref{eq:discrete Sobolev norm} by replacing $d_{cc}$ through $d$. By \cite[Theorem~2.2]{Liu2020} we know that $\|\cdot\|_{W^\alpha_p}$ and $\|\cdot\|_{W^\alpha_p,(1)}$ are equivalent norms on $ W^\alpha_p([0,T];G^N(\R^d)/K)$ for $T=1$.

\subsection{Lyons--Victoir extension theorem for Sobolev paths}\label{subsec:Lyons--Victoir lift}

To obtain the Lyons--Victoir extension theorem for paths of arbitrary low Sobolev regularity, we generalizes the original approach of Lyons and Victoir~\cite{Lyons2007a} using a discrete characterization of fractional Sobolev space provided in \cite{Liu2020}. Recall that Lyons and Victoir show how to lift a H{\"o}lder continuous path to a rough path, which then extends to path of finite $p$-variation by a re-parameterization argument. The key idea is to construct the rough path lift of the H\"older continuous path on the dyadic grid by an induction argument and then verify that this gives indeed rise to a H\"older continuous rough path. This last step is based on a simple discrete characterization of H\"older spaces, see \cite[Lemma~2]{Lyons2007a}, which surprisingly generalizes to fractional Sobolev spaces, see \cite[Theorem~2.2]{Liu2020}. 

The next lemma can be viewed as a generalization of \cite[Lemma~13]{Lyons2007a} from H{\"o}lder paths to Sobolev paths, which relies on the axiom of choice. 

\begin{lemma}\label{lem:Generalization of Lemma 13 in Lyons-Victoir}
  Let $(G,\|\cdot\|_G)$ be a normed Carnot group with graded Lie algebra 
  $$
    \mathcal{G} = W_1 \oplus W_2\oplus \dots \oplus W_n, \quad W_{i+1}:=[W_i,W_1],\quad \text{for } i=1,\dots,n-1,
  $$
  for a normed space $W_1$. Let $K$ be a closed subgroup of $\exp(W_n)$, which gives a normed Carnot group $(G/K, \|\cdot \|_{G/K})$. Let $0<\alpha<1$ and $1\le p \le + \infty$ be such that $\alpha> 1/p$, and $X$ be a continuous path belonging to $W^\alpha_{p}([0,T];(G/K, \|\cdot \|_{G/K}))$. Then, if $\alpha < 1/n$, there exists a $(G,\|\cdot\|_G)$-valued path $\tilde{X}$ such that $\tilde{X} \in W^\alpha_{p}([0,T];(G, \|\cdot \|_{G}))$ and $\pi_{G,G/K}(\tilde{X}) = X$.
\end{lemma}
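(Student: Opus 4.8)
The plan is to build the lift $\tilde X$ on the dyadic rationals of $[0,T]$ by induction over the scale, to check that it has finite \emph{discrete} fractional Sobolev norm in the sense of \cite[Theorem~2.2]{Liu2020}, and then to extend it continuously. After rescaling I assume $T=1$, and after a left translation that $X_0$ is neutral. Since $\exp(W_n)$ is abelian, the identity component of $K$ is $\exp(\mathcal K)$ for a linear subspace $\mathcal K\subseteq W_n$, and $G/\exp(\mathcal K)\to G/K$ is a covering of Carnot groups which is a local isometry; lifting the continuous path $X$ along it produces a continuous $G/\exp(\mathcal K)$-valued path whose dyadic increments coincide with those of $X$ at all sufficiently fine scales, hence which again lies in $W^\alpha_p$. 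So I may assume $K=\exp(\mathcal K)$; I fix a linear complement $W_n=\mathcal K\oplus\mathcal M$ and write $\pi:=\pi_{G,G/K}$.

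The key point is that, for $g\in G$, the image $\pi(g)$ determines every component of $\log g$ except the $\mathcal K$-part of its top-degree component; call the latter $\kappa(g)\in\mathcal K$, so that $\|g\|_G\sim\|\pi(g)\|_{G/K}\vee|\kappa(g)|^{1/n}$. Assume $\tilde X$ is defined on $\Lambda_j\cap[0,1]$ with $\pi(\tilde X_q)=X_q$. For a dyadic interval $I=[m2^{-j},(m+1)2^{-j}]$ with midpoint $\mu$, a choice of $\tilde X_\mu$ projecting onto $X_\mu$ is exactly a choice of $a:=\tilde X_{m2^{-j},\mu}$ in its $K$-coset, which makes $\kappa_{j+1,2m}:=\kappa(a)$ an arbitrary element of $\mathcal K$; then $b:=\tilde X_{\mu,(m+1)2^{-j}}$ is determined, and expanding $\tilde X_I=a\otimes b$ by Baker--Campbell--Hausdorff,
\begin{equation*}
  \kappa_{j+1,2m}+\kappa_{j+1,2m+1}=\kappa_{j,m}-\beta_{j,m},
\end{equation*}
where $\beta_{j,m}\in\mathcal K$ is the $\mathcal K$-part of the degree-$n$ correction term, which depends only on the components of degree $<n$ of $\tilde X_I$, hence only on $X$. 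Using the axiom of choice to select such a preimage at every dyadic interval, I make the balanced choice $\kappa_{j+1,2m}=\kappa_{j+1,2m+1}=\tfrac12(\kappa_{j,m}-\beta_{j,m})$, starting from $\kappa_{0,0}:=\kappa(\tilde X_{0,1})=0$.

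The estimate is the heart of the matter. By the above $\|\tilde X_{s,t}\|_G\lesssim d(X_s,X_t)+|\kappa(\tilde X_{s,t})|^{1/n}$, and unrolling the recursion gives, with $\delta_{\ell,b}:=d(X_{b2^{-\ell}},X_{(b+1)2^{-\ell}})$, $m^{(\ell)}:=\lfloor m2^{\ell-j}\rfloor$ and $s^{(\ell)}$ the sibling of $m^{(\ell)}$,
\begin{equation*}
  |\kappa_{j,m}|\le\sum_{i=0}^{j-1}2^{-(j-i)}|\beta_{i,m^{(i)}}|\lesssim\sum_{\ell=1}^{j}2^{-(j-\ell)}\big(\delta_{\ell,m^{(\ell)}}^{n}+\delta_{\ell,s^{(\ell)}}^{n}\big),
\end{equation*}
since the degree-$n$ BCH correction over a dyadic interval is controlled by the $n$-th power of the quotient sizes of its two halves. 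Writing $\|\tilde X\|^p_{W^\alpha_p,(1)}=\sum_{j\ge0}2^{j(\alpha p-1)}\sum_{m}\|\tilde X_{[j,m]}\|_G^p\lesssim\|X\|^p_{W^\alpha_p,(1)}+\sum_{j\ge0}2^{j(\alpha p-1)}\sum_m|\kappa_{j,m}|^{p/n}$, I would bound the last sum by: using subadditivity of $t\mapsto t^{1/n}$ to get $|\kappa_{j,m}|^{1/n}\lesssim\sum_{\ell\le j}2^{-(j-\ell)/n}(\delta_{\ell,m^{(\ell)}}+\delta_{\ell,s^{(\ell)}})$; splitting off a factor $2^{-(j-\ell)\theta/n}$ for a small $\theta>0$ and applying H\"older in $\ell$ so as to keep a geometric weight $2^{-(j-\ell)(1-\theta)p/n}$ inside $|\kappa_{j,m}|^{p/n}$; summing over the $2^j$ intervals $m$, each level-$\ell$ interval being the ancestor (respectively the sibling of the ancestor) of $2^{j-\ell}$ of them; and interchanging the $j$- and $\ell$-sums. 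The resulting $j$-series is geometric with ratio $2^{p(\alpha-(1-\theta)/n)}$, which is $<1$ for $\theta$ small because $\alpha<1/n$, leaving $\sum_j2^{j(\alpha p-1)}\sum_m|\kappa_{j,m}|^{p/n}\lesssim\sum_{\ell\ge0}2^{\ell(\alpha p-1)}\sum_{m'}\delta_{\ell,m'}^{p}=\|X\|^p_{W^\alpha_p,(1)}$ (for $p=\infty$ one sums the geometric series in $\ell$ directly and recovers the Lyons--Victoir bound). Finite discrete Sobolev norm forces $\tilde X$ restricted to the dyadics to be $(\alpha-1/p)$-H\"older, so it extends continuously to $[0,1]$; continuity of $X$ and $\pi$ gives $\pi(\tilde X)=X$, and \cite[Theorem~2.2]{Liu2020} yields $\tilde X\in W^\alpha_p([0,1];(G,\|\cdot\|_G))$.

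The main obstacle I anticipate is precisely this chain of inequalities: a crude even split combined with a direct application of Jensen's or H\"older's inequality either destroys the geometric decay carried by the factors $2^{-(j-\ell)}$ (and then the $j$-series diverges because $\alpha>1/p$) or only closes under the strictly stronger, and in general false, condition $\alpha p<1/n$; keeping almost all of the decay through the interpolation is what makes $\alpha<1/n$ exactly the right threshold, in parallel with \cite[Lemma~13]{Lyons2007a}. By comparison, the reduction to connected $K$, the verification that $\beta_{j,m}$ depends only on $X$, and the passage from a dyadically defined map to a genuine continuous path are routine.
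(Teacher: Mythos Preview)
Your proposal is correct and follows essentially the paper's approach: a Lyons--Victoir dyadic construction together with the discrete Sobolev characterization \cite[Theorem~2.2]{Liu2020}. The paper organizes the key estimate more compactly by setting $a_m:=2^{m(\alpha-1/p)}\bigl(\sum_k\|Y_{k2^{-m},(k+1)2^{-m}}\|_G^p\bigr)^{1/p}$ and deriving the scalar recursion $a_{m+1}\le 2^{\alpha-1/n-1/p}a_m+b_m$, which is then summed in $\ell^p$ via a discrete Young/Jensen argument; this is equivalent to your pointwise H\"older interpolation in the scale index but makes the threshold $\alpha<1/n$ (i.e.\ $r:=\alpha-1/n-1/p<0$) appear immediately, and it bypasses your preliminary reduction to connected $K$ by working throughout with the Lyons--Victoir injection $i_{G/K,G}$.
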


While the construction of $\tilde{X}$ follows the same lines as in the proof of \cite[Lemma~13]{Lyons2007a}, to prove its Sobolev regularity, however, requires some substantial extra work. 

\begin{proof}
  W.l.o.g. we may assume that $T=1$.
  In order to construct the lifted path $\tilde{X}$, we will first construct its increments $\tilde{X}_{s,t}$, where $s,t$ are two adjacent dyadic numbers in $[0,1]$. Such a collection of $\tilde{X}_{s,t}$ will satisfies that 
  \begin{equation}\label{eq:discrete characterization of Lie group valued Besov functions}
    \sum_{j=0}^\infty 2^{j (\alpha p- 1)} \sum_{m=0}^{2^j-1}  \Big \| \tilde{X}_{\frac{m}{2^j},\frac{m+1}{2^j}} \Big \|_{G}^p < + \infty,
  \end{equation}
  and 
  \begin{equation}\label{eq:the projection condition}
    \pi_{G,G/K}(\tilde{X}_{s,t}) = X_{s,t}.
  \end{equation}
  Then, due to \cite[Theorem~2.2]{Liu2020}, by multiplying these increments and then extending to the whole interval $[0,1]$, we will get a continuous path $\tilde{X}$ such that $\tilde{X} \in W^\alpha_{p}([0,1];(G, \|\cdot \|_{G}))$ and $\pi_{G,G/K}(\tilde{X}) = X$.
 
  Following the same construction procedure as in the proof of \cite[Lemma~13]{Lyons2007a}, we define recursively for $m \in \N_0$ some elements $Y_{\frac{k}{2^m},\frac{k+1}{2^m}} \in K$, $k = 0,\dots,2^m-1$, and the elements $\tilde{X}_{\frac{k}{2^m},\frac{k+1}{2^m}}$ by the formula
  $$
    \tilde{X}_{\frac{k}{2^m},\frac{k+1}{2^m}} = i_{G/K,G}(X_{\frac{k}{2^m},\frac{k+1}{2^m}}) \otimes Y_{\frac{k}{2^m},\frac{k+1}{2^m}},
  $$
  where $i_{G/K,G}$ is the injection of \cite[Proposition~6]{Lyons2007a}. This ensures that $\pi_{G,G/K}(\tilde{X}) = X$. Note that the proof of \cite[Proposition~6]{Lyons2007a} requires the axiom of choice.
  
  Let $Y_{0,1} := \exp(0)$. Then, we assume that $Y_{\frac{k}{2^m},\frac{k+1}{2^m}}$ and $\tilde{X}_{\frac{k}{2^m},\frac{k+1}{2^m}}$ have been constructed for all $0 \le k \le 2^m-1$ and a fixed $m \in \N_0$, and we define the two elements $Y_{\frac{2k}{2^{m+1}},\frac{2k+1}{2^{m+1}}}$ and $Y_{\frac{2k+1}{2^{m+1}},\frac{2k+2}{2^{m+1}}}$ to be both equal, and equal to the inverse of
  $$
    \delta_{2^{-\frac{1}{n}}}\Big(i_{G/K,G}(X_{\frac{2k}{2^{m+1}},\frac{2k+1}{2^{m+1}}})\otimes i_{G/K,G}(X_{\frac{2k+1}{2^{m+1}},\frac{2k+2}{2^{m+1}}}) \otimes \tilde{X}^{-1}_{\frac{k}{2^m},\frac{k+1}{2^m}}\Big),
  $$
  where $\delta_{\cdot}$ denotes the dilation operator on $G$ as defined in \cite[Definition~4]{Lyons2007a}. Using the same reasoning as in the proof of \cite[Lemma~13]{Lyons2007a}, one can verify that $Y_{\frac{2k}{2^{m+1}},\frac{2k+1}{2^{m+1}}} = Y_{\frac{2k+1}{2^{m+1}},\frac{2k+2}{2^{m+1}}}$ are elements in $K$, and $\tilde{X}_{\frac{k}{2^m},\frac{k+1}{2^m}} = \tilde{X}_{\frac{2k}{2^{m+1}},\frac{2k+1}{2^{m+1}}} \otimes \tilde{X}_{\frac{2k+1}{2^{m+1}},\frac{2k+2}{2^{m+1}}}$.

  Now, we set
  $$
    a_m := 2^{m(\alpha-\frac{1}{p})}\Big(\sum_{k=0}^{2^m-1} \|Y_{\frac{k}{2^m},\frac{k+1}{2^m}}\|_G^p\Big)^{\frac{1}{p}}.
  $$ 
  Since $\| \cdot \|_G$ is a sub-additive homogeneous norm, using the bounds given in \cite[Proposition~6]{Lyons2007a} for the injection $i_{G/K,G}$ and Minkowski's inequality, we can check that (the constant~$C$ may vary from line to line, but it only depends on $n,\alpha,p$ and $q$):
  \begin{align*}
    &2^{\frac{1}{n}}2^{-(m+1)(\alpha -\frac{1}{p})}a_{m+1} = 2^{\frac{1}{n}}\Big(\sum_{k=0}^{2^{m+1}-1} \|Y_{\frac{k}{2^{m+1}},\frac{k+1}{2^{m+1}}}\|_G^p\Big)^{\frac{1}{p}} \\
    &\quad= 2^{\frac{1}{n}}\Big(\sum_{k=0}^{2^m-1}\|\delta_{2^{-\frac{1}{n}}}\Big(i_{G/K,G}(X_{\frac{2k}{2^{m+1}},\frac{2k+1}{2^{m+1}}})\otimes i_{G/K,G}(X_{\frac{2k+1}{2^{m+1}},\frac{2k+2}{2^{m+1}}}) \otimes \tilde{X}^{-1}_{\frac{k}{2^m},\frac{k+1}{2^m}}\Big) \|_G^p\Big)^{\frac{1}{p}} \\
    &\quad\le \Big(\sum_{k=0}^{2^m-1}\Big(\|i_{G/K,G}(X_{\frac{2k}{2^{m+1}},\frac{2k+1}{2^{m+1}}})\|_G +  \|i_{G/K,G}(X_{\frac{2k+1}{2^{m+1}},\frac{2k+2}{2^{m+1}}})\|_G + \|\tilde{X}_{\frac{k}{2^m},\frac{k+1}{2^m}}\|_G\Big)^p\Big)^{\frac{1}{p}} \\
    &\quad\le \Big(\sum_{k=0}^{2^m-1}\|\tilde{X}_{\frac{k}{2^{m}},\frac{k+1}{2^{m}}}\|_G^p\Big)^{\frac{1}{p}} + C\Big(\sum_{k=0}^{2^{m+1}-1}\|X_{\frac{k}{2^{m+1}},\frac{k+1}{2^{m+1}}}\|_{G/K}^p\Big)^{\frac{1}{p}},
  \end{align*}
  and 
  \begin{align*}
    \Big(\sum_{k=0}^{2^m-1}\|\tilde{X}_{\frac{k}{2^{m}},\frac{k+1}{2^{m}}}\|_G^p\Big)^{\frac{1}{p}} &\le C\Big(\sum_{k=0}^{2^{m}-1}\|X_{\frac{k}{2^{m}},\frac{k+1}{2^{m}}}\|_{G/K}^p\Big)^{\frac{1}{p}} + \Big(\sum_{k=0}^{2^m-1}\|Y_{\frac{k}{2^{m}},\frac{k+1}{2^{m}}}\|_G^p\Big)^{\frac{1}{p}} \\
    &= C\Big(\sum_{k=0}^{2^{m}-1}\|X_{\frac{k}{2^{m}},\frac{k+1}{2^{m}}}\|_{G/K}^p\Big)^{\frac{1}{p}} +2^{-m(\alpha-\frac{1}{p})}a_m.
  \end{align*}
  Therefore, we obtain that
  \begin{align*}
    2^{\frac{1}{n}}&2^{-(m+1)(\alpha-\frac{1}{p})}a_{m+1} \\
    &\le 2^{-m(\alpha-\frac{1}{p})}a_m + C\Big(\sum_{k=0}^{2^{m}-1}\|X_{\frac{k}{2^{m}},\frac{k+1}{2^{m}}}\|_{G/K}^p\Big)^{\frac{1}{p}} + C\Big(\sum_{k=0}^{2^{m+1}-1}\|X_{\frac{k}{2^{m+1}},\frac{k+1}{2^{m+1}}}\|_{G/K}^p\Big)^{\frac{1}{p}},
  \end{align*}
  which in turn implies that
  \begin{equation}\label{eq:bound for am}
    a_{m+1} \le 2^{\alpha-\frac{1}{n}-\frac{1}{p}}a_m + b_m,
  \end{equation}
  where 
  $$
    b_m:= C2^{m(\alpha -\frac{1}{p})}\Big(\sum_{k=0}^{2^{m}-1}\|X_{\frac{k}{2^{m}},\frac{k+1}{2^{m}}}\|_{G/K}^p\Big)^{\frac{1}{p}} + C2^{(m+1)(\alpha-\frac{1}{p})}\Big(\sum_{k=0}^{2^{m+1}-1}\|X_{\frac{k}{2^{m+1}},\frac{k+1}{2^{m+1}}}\|_{G/K}^p\Big)^{\frac{1}{p}}.
  $$ 
  Since $\alpha < \frac{1}{n}$ holds by assumption, $r:= \alpha -\frac{1}{n}-\frac{1}{p} < 0$. Moreover, since $X$ is an element in $W^\alpha_{p}([0,1];(G/K, \|\cdot\|_{G/K}))$, by \cite[Theorem~2.2]{Liu2020} we have 
  $
    \sum_{m=0}^\infty b_m^p \lesssim \|X\|_{W^\alpha_p,(1)}  < + \infty.
  $
  Iterating applications of inequality~\eqref{eq:bound for am}, we see that
  $$
    a_{m+1} \le 2^{r(m+1)}a_0 + \sum_{k=0}^{m}2^{r(m-k)}b_k
  $$
  holds for all $m \in \N_0$. Consequently, we can apply Minkowski's and Jensen's inequality to deduce that
  \begin{align*}
    \Big(\sum_{m=0}^\infty a_m^p\Big)^{\frac{1}{p}} 
    &\le \Big(\sum_{m=0}^\infty (2^{rm}a_0)^p\Big)^{\frac{1}{p}} + \Big(\sum_{m=0}^\infty \Big(\sum_{k=0}^{m}2^{r(m-k)}b_k\Big)^p \Big)^{\frac{1}{p}} \\
    &\le C + C\Big(\sum_{m=0}^\infty \sum_{k=0}^{m}2^{r(m-k)}b_k^p \Big)^{\frac{1}{p}} \\
    &\le C+C\Big(\sum_{m=0}^\infty b_m^p\Big)^{\frac{1}{p}} 
    \le C + C\|X\|_{W^\alpha_p,(1)} < + \infty. 
  \end{align*} 
  Combining all above estimates, we obtain that
  \begin{align*}
    \Big (\sum_{m=0}^\infty & \Big(2^{m(\alpha-\frac{1}{p})}\Big(\sum_{k=0}^{2^m-1}\|\tilde{X}_{\frac{k}{2^m},\frac{k+1}{2^m}}\|_G^p\Big)^{\frac{1}{p}} \Big)^p \Big)^{\frac{1}{p}}\\
    &\le C \|X\|_{W^\alpha_p,(1)}
    +\sum_{m=0}^\infty \Big(2^{mp(\alpha-\frac{1}{p})}\sum_{k=0}^{2^m-1}\|Y_{\frac{k}{2^m},\frac{k+1}{2^m}}\|_G^p \Big)^{\frac{1}{p}} \\
    &\le C\|X\|_{W^\alpha_p,(1)} + \Big(\sum_{m=0}^\infty a_m^p\Big)^{\frac{1}{p}} 
    \le C + C\|X\|_{W^\alpha_p,(1)} < +\infty,
  \end{align*}
  which gives the bound~\eqref{eq:discrete characterization of Lie group valued Besov functions}. Furthermore, this shows that $\tilde{X}$ can be extended from the dyadic numbers to the whole interval~$[0,1]$ and then the left-hand side of the above inequality will be equal to $\|\tilde{X}\|_{W^\alpha_p,(1)}$. Now, using \cite[Theorem~2.2]{Liu2020} again for $\tilde{X}$ we can conclude that $\tilde{X}$ belongs to $W^\alpha_{p}([0,1];(G, \|\cdot \|_{G}))$ and the condition $\pi_{G,G/K}(\tilde{X}) = X$ is guaranteed by~\eqref{eq:the projection condition}.
\end{proof}

As an application of Lemma~\ref{lem:Generalization of Lemma 13 in Lyons-Victoir}, we obtain the following two results, to which we refer to as \textit{Lyons--Victoir extension theorem for Sobolev paths}. Note that Theorem~\ref{thm:Generalization of Theorem 14 in Lyons-Victoir} and Corollary~\ref{cor:Generalization of Corollary 19 in Lyons-Victoir} are the counterparts of \cite[Theorem~14]{Lyons2007a} and \cite[Corollary~19]{Lyons2007a}, respectively, in the Sobolev setting, and all arguments used for establishing these two results as given in \cite{Lyons2007a} remain valid in the current setting up to changing the H{\"o}lder norms to Sobolev norms.

\begin{theorem}\label{thm:Generalization of Theorem 14 in Lyons-Victoir}
  Let $0<\alpha<1$ and $1\le p\le + \infty$ be such that $\alpha > 1/p$ and $\frac{1}{\alpha}\notin \N\setminus \{1\}$. Let $K$ be a closed normal subgroup of $G^{[\frac{1}{\alpha}]}(\R^d)$. If $X\in W^\alpha_{p}([0,T];G^{[\frac{1}{\alpha}]}(\R^d)/K)$, then there exists a rough path $\X \in W^\alpha_{p}([0,T];G^{[\frac{1}{\alpha}]}(\R^d))$ such that
  $$
    \pi_{G^{[\frac{1}{\alpha}]}(\R^d), G^{[\frac{1}{\alpha}]}(\R^d)/K}(\X) = X.
  $$
\end{theorem}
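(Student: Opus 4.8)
The plan is to follow the proof of \cite[Theorem~14]{Lyons2007a} essentially verbatim, performing two substitutions throughout: every H\"older norm is replaced by the corresponding Sobolev norm $W^\alpha_p$, and the two inputs of Lyons--Victoir---their lifting lemma \cite[Lemma~13]{Lyons2007a} and the elementary discrete characterization of H\"older spaces \cite[Lemma~2]{Lyons2007a}---are replaced by their Sobolev counterparts, namely Lemma~\ref{lem:Generalization of Lemma 13 in Lyons-Victoir} above and the discrete characterization of Sobolev spaces \cite[Theorem~2.2]{Liu2020}. First I would reduce to $T=1$. Then, writing $N:=[1/\alpha]$, I would record the elementary observation that, since $\alpha<1$ forces $1/\alpha>1$ and the hypothesis $1/\alpha\notin\N\setminus\{1\}$ then says that $1/\alpha$ is not an integer, one has $N<1/\alpha$, i.e. $\alpha<1/N$, and hence $\alpha<1/m$ for every integer $1\le m\le N$.

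Next, using the normality of $K$, I would construct---exactly as in \cite{Lyons2007a}---a finite chain of closed normal subgroups
\[
  \{1\}=K_0\subseteq K_1\subseteq\cdots\subseteq K_r=K
\]
of $G^{N}(\R^d)$ such that, setting $M_i:=G^{N}(\R^d)/K_i$ equipped with its quotient homogeneous norm (so that $M_0=G^{N}(\R^d)$ and $M_r=G^{N}(\R^d)/K$), each $M_i$ is a stratified normed Carnot group of depth $m_i\le N$ in the sense required by Lemma~\ref{lem:Generalization of Lemma 13 in Lyons-Victoir}, and the kernel of the canonical projection $M_{i-1}\twoheadrightarrow M_i$ is a closed subgroup of the exponential of the top stratum of $M_{i-1}$. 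I would then climb this tower from the bottom: starting from $X\in W^\alpha_p([0,T];M_r)$, I would inductively lift, for $i=r,r-1,\dots,1$, a path $X^{(i)}\in W^\alpha_p([0,T];M_i)$ to a path $X^{(i-1)}\in W^\alpha_p([0,T];M_{i-1})$ with $\pi_{M_{i-1},M_i}(X^{(i-1)})=X^{(i)}$, by applying Lemma~\ref{lem:Generalization of Lemma 13 in Lyons-Victoir} with $G=M_{i-1}$ and $K'=\ker(M_{i-1}\to M_i)$, the hypothesis $\alpha<1/m_{i-1}$ being guaranteed by the first paragraph. After $r$ steps this yields $\X:=X^{(0)}\in W^\alpha_p([0,T];G^{N}(\R^d))$, and since the composition of the projections $M_0\to M_1\to\cdots\to M_r$ equals $\pi_{G^{N}(\R^d),G^{N}(\R^d)/K}$, this $\X$ satisfies $\pi_{G^{N}(\R^d),G^{N}(\R^d)/K}(\X)=X$, which is the claim.

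The only genuine analytic difficulty---turning the dyadic recursive construction of the lift into a true Sobolev bound rather than a H\"older bound---is already resolved inside Lemma~\ref{lem:Generalization of Lemma 13 in Lyons-Victoir}, whose proof, via the discrete characterization \cite[Theorem~2.2]{Liu2020}, is the exact counterpart of \cite[Lemma~13]{Lyons2007a}. Consequently the work remaining for the theorem is of two purely routine kinds. The first, which I expect to be the main point requiring attention, is the Lie-theoretic construction of the tower of quotient Carnot groups with top-stratum kernels: this is where the normality of $K$ is used, and one must also check that every intermediate quotient $M_i$ genuinely carries the stratified structure demanded by Lemma~\ref{lem:Generalization of Lemma 13 in Lyons-Victoir}, which follows once the $K_i$ are taken with homogeneous Lie algebras; this step is handled identically to \cite{Lyons2007a}. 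The second is the bookkeeping that $\alpha<1/N$---equivalently, that $1/\alpha$ is not an integer $\ge2$---is precisely what makes Lemma~\ref{lem:Generalization of Lemma 13 in Lyons-Victoir} applicable at \emph{every} stage of the tower, the critical stage being the last one, at which $M_0=G^{N}(\R^d)$ has depth exactly $N$ and the strict inequality $\alpha<1/N$ is indispensable. I therefore do not expect any obstacle beyond what Lemma~\ref{lem:Generalization of Lemma 13 in Lyons-Victoir} already overcomes.
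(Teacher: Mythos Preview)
Your proposal is correct and follows exactly the approach taken in the paper: the paper does not give a detailed proof but simply notes that ``all arguments used for establishing these two results as given in \cite{Lyons2007a} remain valid in the current setting up to changing the H\"older norms to Sobolev norms,'' with Lemma~\ref{lem:Generalization of Lemma 13 in Lyons-Victoir} playing the role of \cite[Lemma~13]{Lyons2007a}. Your outline of the tower of quotients and the use of the condition $\alpha<1/N$ at each stage is precisely the structure of the Lyons--Victoir argument, so there is nothing to add.
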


\begin{corollary}\label{cor:Generalization of Corollary 19 in Lyons-Victoir}
  Let $0<\alpha<1$ and $1\le p\le + \infty$ be such that $\alpha > \frac{1}{p}$ and $\frac{1}{\alpha} \notin \N\setminus \{1\}$. Then, every $\R^d$-valued path $X\in W^\alpha_{p}([0,T];\R^d)$ can be lifted to a weakly geometric Sobolev rough path $\X\in W^{\alpha}_{p}([0,T];G^{[\frac{1}{\alpha}]}(\R^d))$.
\end{corollary}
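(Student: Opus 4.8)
The plan is to obtain the corollary as the special case of Theorem~\ref{thm:Generalization of Theorem 14 in Lyons-Victoir} in which $K$ is the kernel of the projection onto the first tensor level. Set $N := [\frac{1}{\alpha}]$. If $\frac{1}{2} < \alpha < 1$, then $N = 1$, $G^1(\R^d)$ is just $(\R^d,+)$, and there is nothing to prove: $\X := X$ (which is continuous since $\alpha > 1/p$) is already a rough path lift of itself. Assume therefore $N \geq 2$. Let $\pi_1 \colon G^N(\R^d) \to \R^d$ denote the projection onto the level-one component. Since the level-one part of $g \otimes h$ equals $\pi_1(g) + \pi_1(h)$, the map $\pi_1$ is a surjective group homomorphism onto $G^1(\R^d) = (\R^d,+)$; hence $K := \ker \pi_1$ is a closed normal subgroup of $G^N(\R^d)$ (explicitly $K = \exp(V_2 \oplus \cdots \oplus V_N)$ by the Baker--Campbell--Hausdorff formula), and $\pi_1$ induces a group isomorphism $G^N(\R^d)/K \cong \R^d$.

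Next I would verify that this identification preserves the relevant function spaces. The quotient homogeneous norm $\|\cdot\|_{G^N(\R^d)/K}$ transported to $\R^d$ via the above isomorphism is a homogeneous norm on the abelian Carnot group $\R^d$, hence comparable to the Euclidean norm $|\cdot|$; therefore the induced metrics are bi-Lipschitz equivalent and, for every admissible pair $(\alpha,p)$, the seminorms $\|\cdot\|_{W^\alpha_p}$ built from them coincide up to constants. Consequently $X \in W^\alpha_p([0,T];\R^d)$ is, after this identification, an element of $W^\alpha_p([0,T];G^N(\R^d)/K)$. Since $\alpha > 1/p$ and $\frac{1}{\alpha} \notin \N \setminus \{1\}$, Theorem~\ref{thm:Generalization of Theorem 14 in Lyons-Victoir} applies with this $K$ and yields a rough path $\X \in W^\alpha_p([0,T];G^N(\R^d))$ with $\pi_{G^N(\R^d),\,G^N(\R^d)/K}(\X) = X$. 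Unravelling the isomorphism $G^N(\R^d)/K \cong \R^d$, this is exactly the assertion $\pi_1(\X) = X$, so $\X$ is a weakly geometric Sobolev rough path lift of $X$ in the sense of Definition~\ref{def:Sobolev geometric rough path}.

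Equivalently, and perhaps more transparently, one may construct $\X$ layer by layer: starting from $\X^{(1)} := X \in W^\alpha_p([0,T];G^1(\R^d))$, for each $k = 1,\dots,N-1$ one observes that $G^{k+1}(\R^d)/\exp(V_{k+1}) \cong G^k(\R^d)$ with $\exp(V_{k+1})$ a closed subgroup of the top graded layer of $G^{k+1}(\R^d)$, and applies Lemma~\ref{lem:Generalization of Lemma 13 in Lyons-Victoir} — which is legitimate because $\alpha < \frac{1}{[1/\alpha]} \le \frac{1}{k+1}$, using $\frac{1}{\alpha} \notin \N$ — to lift $\X^{(k)}$ to $\X^{(k+1)} \in W^\alpha_p([0,T];G^{k+1}(\R^d))$; then $\X := \X^{(N)}$. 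I do not expect a genuine obstacle in either route: all the analytic substance (the recursion bound for $a_m$ in~\eqref{eq:bound for am} and the equivalence of the discrete and fractional Sobolev seminorms supplied by \cite[Theorem~2.2]{Liu2020}) is already contained in Lemma~\ref{lem:Generalization of Lemma 13 in Lyons-Victoir} and Theorem~\ref{thm:Generalization of Theorem 14 in Lyons-Victoir}. The only point in the present statement that demands any care is the group-theoretic bookkeeping — selecting the correct quotient and checking that its quotient norm is equivalent to the Euclidean norm, so that no Sobolev regularity is lost under the identification $G^N(\R^d)/K \cong \R^d$.
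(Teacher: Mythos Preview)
Your proposal is correct and is precisely the argument the paper has in mind: the paper gives no explicit proof of the corollary, merely noting that it is the Sobolev counterpart of \cite[Corollary~19]{Lyons2007a} and that the original argument carries over verbatim, which amounts to applying Theorem~\ref{thm:Generalization of Theorem 14 in Lyons-Victoir} with $K=\ker\pi_1$ (or, equivalently, iterating Lemma~\ref{lem:Generalization of Lemma 13 in Lyons-Victoir} layer by layer). Your check that the quotient norm on $G^N(\R^d)/K\cong\R^d$ is equivalent to the Euclidean norm is the only detail worth making explicit, and you handle it correctly.
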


\begin{remark}
  \begin{enumerate}
    \item In the present paper, we mainly consider the following two closed normal subgroups $K \subset G^{[\frac{1}{\alpha}]}(\R^d)$:
    \begin{itemize}
      \item $K = \exp(W_n)$ for $n = [\frac{1}{\alpha}]$ and $W_n$ defined as in Lemma \ref{lem:Generalization of Lemma 13 in Lyons-Victoir}. Using such a $K$ in Theorem~\ref{thm:Generalization of Theorem 14 in Lyons-Victoir}, we can prove Corollary~\ref{cor:Generalization of Corollary 19 in Lyons-Victoir} above.
      \item For $\alpha \in (\frac{1}{3}, \frac{1}{2})$ and $k, \ell \in \N$ such that $\R^d = \R^k \oplus \R^\ell$. Let $K = \exp([\R^k,\R^\ell])$, where $[\R^k,\R^\ell] = \text{span}\{e_i \otimes e_{k+j} - e_{k+j} \otimes e_i: i \le k, j \le \ell\}$. Using such a $K$ in Theorem~\ref{thm:Generalization of Theorem 14 in Lyons-Victoir} we can prove the existence of a joint lift rough path $\Z \in W^{\alpha}_{p}([0,T];G^{[\frac{1}{\alpha}]}(\R^d))$ over two rough paths $\X \in W^{\alpha}_{p}([0,T];G^{[\frac{1}{\alpha}]}(\R^k))$ and $\Y \in W^{\alpha}_{p}([0,T];G^{[\frac{1}{\alpha}]}(\R^\ell))$, see Proposition~\ref{prop:optimization of 2 levels joint lifts}.
    \end{itemize}
    \item  As in the paper~\cite{Lyons2007a} of Lyons and Victoir, the results in this subsection still hold if one replaces the Euclidean space~$\R^d$ by a general normed space~$V$ (and then equip the Carnot group $G^n(V)$ with a homogeneous norm which can induce a metric), since completeness is actually neither needed for the construction nor for the discrete characterizations.
  \end{enumerate}
\end{remark}

\begin{remark}
  It was shown in \cite{Yang2012} that the Lyons--Victoir extension does not hold true, in general, in the case $1/s \in \N$.
\end{remark}

\section{Optimal rough path extension}\label{sec:optimal lift}

Due to the extension theorems provided in the last section, we know that every path of suitable Sobolev regularity can be lifted to a Sobolev rough path. This leads, in general, to an infinite set of possible rough path lifts for one given $\R^d$-valued path, see for example \cite[Chapter~2.1]{Friz2014}. Therefore, in order to select a specific rough path lift usually stochastic methods such as It\^o or Stratonovich integration are applied. The purpose of this section is to investigate the possibility of using purely deterministic selection criteria, which will be modelled by a given function acting on the set of possible rough path lifts.\smallskip

To be more precise, we assume that $\alpha \in (0,1)$ and $p \in (1, +\infty)$ such that $\alpha > 1/p$ throughout the whole section. Let $K$ be a closed normal subgroup of $G^{N}(\R^d)$.

Given a path $X\in W^{\alpha}_p([0,T];G^{[\frac{1}{\alpha}]}(\R^d)/K)$ we define the set  
\begin{equation*}
  \mathcal{A}(X):= \Big\{ \X \in W^{\alpha}_p([0,T];G^{[\frac{1}{\alpha}]}(\mathbb{R}^d))\,:\, \pi_{G^{[\frac{1}{\alpha}]}(\R^d),G^{[\frac{1}{\alpha}]}(\R^d)/K} (\X) =X\Big\}.
\end{equation*}
The set $\mathcal{A}(X)$ is called \textit{admissible set} of rough path lifts above~$X$. Thanks to the Lyons--Victoir extension theorem for Sobolev paths (Theorem~\ref{thm:Generalization of Theorem 14 in Lyons-Victoir}), the admissible set $\mathcal{A}(X)$ is always non-empty. In words, $\mathcal{A}(X)$ denotes the set of all rough path lifts $\X$ of $X$ which have the same Sobolev regularity as the given path~$X$. 

\begin{problem}\label{prob:optimal extension}
  Given a path $X\in W^{\alpha}_p([0,T];G^{[\frac{1}{\alpha}]}(\R^d)/K)$ and a selection criterion $F\colon \mathcal{A}(X) \to \R \cup \{+\infty\}$, we are looking for an admissible rough path lift $\X^*\in \mathcal{A}(X)$ such that
  \begin{equation}\label{problem:optimal lift}
    F(\X^*)=\min_{\X\in \mathcal{A}(X)} F(\X).
  \end{equation}
\end{problem}

In the sequel, we will demonstrate that, for suitable convex functionals~$F$ and for certain classes of closed subgroups~$K$, the Optimal Extension Problem~\ref{prob:optimal extension} admits indeed a unique solution~$\X^*$. One crucially ingredient will be that the space of Sobolev rough paths (and thus the admissible set~$\mathcal{A}(X)$) can be embedded naturally into a Sobolev space, which is a reflexive Banach space. This property does not hold true if the Sobolev topology is replaced by, e.g., a H\"older or $p$-variation topology, which makes optimization problems with respect to H\"older or $p$-variation topologies very cumbersome.

\subsection{Existence and uniqueness}

A classical approach to solve a minimizing problem like the Optimal Extension Problem~\ref{prob:optimal extension} is to rely on convex analysis and convex optimization. To proceed in our setting via such methods, some care is required. Indeed, since the Lie group $G^{N}(\mathbb{R}^d)$ is not a vector space, there is no canonical notion of convexity on $G^{N}(\mathbb{R}^d)$.

\begin{remark}
  For example, taking $g,h \in G^N(\R^d)$ and $\lambda \in (0,1)$, both elements $\delta_{1 - \lambda}g \otimes \delta_{\lambda}h$ and $g \otimes \delta_{\lambda}(g^{-1} \otimes h)$ can be viewed as convex combination of $g$ and $h$. However, unlike to the vector space case, the relation $\delta_{1 - \lambda}g \otimes \delta_{\lambda}h = g \otimes \delta_{\lambda}(g^{-1} \otimes h)$ fails in general. 
 
  For our minimizing problem~\eqref{problem:optimal lift} over an admissible set $\mathcal{A}(X)$ of rough paths, we expect the notion of convexity on $G^{[\frac{1}{\alpha}]}(\mathbb{R}^d)$ can be inherited by the admissible set $\mathcal{A}(X)$. More precisely, any convex combination of two elements $\mathbf{X}$ and $\mathbf{Y}$ from $\mathcal{A}(X) \subset W^\alpha_p([0,T];G^{[\frac{1}{\alpha}]}(\R^d)) $ should be an element in $\mathcal{A}(X)$. 
\end{remark}

Based on this consideration, one possible choice is to apply the $\log$ mapping to transfer elements $g,h$ of $G^{N}(\R^d)$ into its Lie algebra $\mathcal{G}^N(\R^d)$, then performing the classical convex combinations on the vector space $\mathcal{G}^N(\R^d)$ and finally using the $\exp$ mapping to obtain the convex combinations of $g$ and $h$ in $G^N(\R^d)$:
\begin{equation*}
  \mathbf{C}_{\lambda}(g,h) := \exp\Big((1-\lambda)\log g + \lambda \log h\Big), \quad \lambda \in (0,1).
\end{equation*}
Using this notion of convexity, we ensure that, for any closed normal subgroup $K \subset G^{[\frac{1}{\alpha}]}(\R^d)$, $X \in W^{\alpha}_p([0,T];G^{[\frac{1}{\alpha}]}(\mathbb{R}^d)/K)$, $\mathbf{X},\mathbf{Y} \in \mathcal{A}(X)$ and $\lambda \in (0,1)$, the convex combination of  $\mathbf{X}$ and $\mathbf{Y}$, which is denoted by $\mathbf{Z}^\lambda := \mathbf{C}_{\lambda}(\mathbf{X},\mathbf{Y})$, satisfies $\pi_{G^{[\frac{1}{\alpha}]}(\R^d),G^{[\frac{1}{\alpha}]}(\R^d)/K} (\Z^\lambda) =X$. However, in general we may not have $\Z^\lambda \in W^{\alpha}_p([0,T];G^{[\frac{1}{\alpha}]}(\mathbb{R}^d))$ because of the mixed Lie brackets of $\log \X$ and $\log \Y$ from the Campbell--Baker--Hausdorff formula. In order to annihilate the effect caused by these mixed Lie brackets, we need the Lie algebra $\mathcal{K}$ of $K$ to be a subspace of ``the highest layer" of the Lie algebra~$\mathcal{G}^{[\frac{1}{\alpha}]}(\R^d)$.

\begin{lemma}\label{lem:admissible set is convex}
  Let $K$ be a closed subgroup of $\exp(W_{[\frac{1}{\alpha}]})$ and $X \in W^{\alpha}_p([0,T];G^{[\frac{1}{\alpha}]}(\R^d)/K)$. If $\mathbf{X}, \mathbf{Y}\in \mathcal{A}(X)$, then $\mathbf{Z}^\lambda  = \mathbf{C}_{\lambda}(\mathbf{X},\mathbf{Y}) \in \mathcal{A}(X)$ for every $\lambda \in (0,1)$. 
  
  Moreover, by viewing $G^{[\frac{1}{\alpha}]}(\R^d)$ as a subset of the affine vector space $\bigoplus_{i=0}^{[\frac{1}{\alpha}]}  (\R^d)^{\otimes i}$, it holds that $\Z^\lambda_{s,t} = \X_{s,t} + \lambda(\Y_{s,t} - \X_{s,t})$ for every $\lambda \in (0,1)$ and $s,t$ in $[0,T]$.
\end{lemma}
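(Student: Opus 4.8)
The plan is to reduce the whole statement to one structural observation about the top layer and then perform only substitutions. Write $N := [\frac{1}{\alpha}]$, decompose $\mathcal{G}^N(\R^d) = W_1 \oplus \cdots \oplus W_N$ as in the lemma, and put $\pi := \pi_{G^N(\R^d),G^N(\R^d)/K}$. First I would record the following facts. For every $i \ge 1$ the bracket satisfies $[W_N,W_i] \subseteq (\R^d)^{\otimes(N+i)} = \{0\}$ in $T^N(\R^d)$, so $W_N$ is central in $\mathcal{G}^N(\R^d)$ and hence $\exp(W_N)$, in particular $K$, is a central subgroup of $G^N(\R^d)$. Moreover, for $v \in W_N \subseteq (\R^d)^{\otimes N}$ we have $v \otimes v = 0$ in $T^N(\R^d)$, so $\exp(v) = \mathbf{1} + v$, and for any $g \in G^N(\R^d)$, since $\pi_i(g) \otimes v$ has tensor degree $\ge N+1$ for $i \ge 1$, the identities
\[
  g \otimes \exp(v) = \exp(v) \otimes g = g + v \ \text{ in } T^N(\R^d), \qquad \log\big(g \otimes \exp(v)\big) = \log g + v
\]
hold, the last one by the Baker--Campbell--Hausdorff formula together with the centrality of $v$. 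This single observation is exactly where the hypothesis $\mathcal{K} \subseteq W_N$ enters --- it is what annihilates the ``mixed Lie brackets'' discussed before the lemma --- and I expect the careful bookkeeping behind it (keeping track of the truncation so that every product of a positive-degree component with a top-degree element is discarded) to be the only delicate point; everything afterwards is direct.

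Second, I would obtain a pointwise description of $\Y$ relative to $\X$. Since $\pi(\X_t) = X_t = \pi(\Y_t)$, the element $\X_t^{-1} \otimes \Y_t$ lies in $K$, hence $\X_t^{-1} \otimes \Y_t = \exp(\kappa_t)$ with $\kappa_t \in \mathcal{K} \subseteq W_N$, i.e. $\Y_t = \X_t \otimes \exp(\kappa_t)$, and by the first step $\log \Y_t = \log \X_t + \kappa_t$. Therefore
\[
  \log \Z^\lambda_t = (1-\lambda)\log\X_t + \lambda\log\Y_t = \log\X_t + \lambda\kappa_t, \qquad \Z^\lambda_t = \X_t \otimes \exp(\lambda\kappa_t),
\]
which is a continuous $G^N(\R^d)$-valued path (as $\X,\Y$ are continuous and $\exp,\log$ are polynomial maps); since $\lambda\kappa_t \in \mathcal{K}$ we have $\exp(\lambda\kappa_t) \in K$, whence $\pi(\Z^\lambda_t) = \pi(\X_t) = X_t$. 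This is the projection part of $\Z^\lambda \in \mathcal{A}(X)$.

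Third, I would compute the increment: using the centrality of $\exp(W_N)$, the additivity of $\exp$ on the abelian subspace $W_N$, and then the first step with $v = \lambda(\kappa_t - \kappa_s) \in W_N$,
\[
  \Z^\lambda_{s,t} = (\Z^\lambda_s)^{-1} \otimes \Z^\lambda_t = \X_{s,t} \otimes \exp\big(\lambda(\kappa_t - \kappa_s)\big) = \X_{s,t} + \lambda(\kappa_t - \kappa_s) \ \text{ in } T^N(\R^d).
\]
Taking $\lambda = 1$ gives $\Y_{s,t} = \X_{s,t} + (\kappa_t - \kappa_s)$, that is $\kappa_t - \kappa_s = \Y_{s,t} - \X_{s,t}$; substituting this back yields $\Z^\lambda_{s,t} = \X_{s,t} + \lambda(\Y_{s,t} - \X_{s,t})$, which is the second assertion of the lemma.

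Finally, for the Sobolev regularity I would note that $\exp(\lambda(\kappa_t - \kappa_s)) = \delta_{\lambda^{1/N}}\exp(\kappa_t - \kappa_s)$, since $\delta_{\lambda^{1/N}}$ multiplies the $N$-th layer by $\lambda$, and combine this with the sub-additivity, symmetry and homogeneity of the Carnot--Caratheodory norm to get, for $\lambda \in (0,1)$,
\[
  d_{cc}(\Z^\lambda_s,\Z^\lambda_t) = \|\Z^\lambda_{s,t}\| \le \|\X_{s,t}\| + \lambda^{1/N}\|\X_{s,t}^{-1} \otimes \Y_{s,t}\| \le 2\,d_{cc}(\X_s,\X_t) + d_{cc}(\Y_s,\Y_t),
\]
where $\exp(\kappa_t - \kappa_s) = \X_{s,t}^{-1} \otimes \Y_{s,t}$ was used. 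Inserting this into the definition of $\|\cdot\|_{W^\alpha_p}$ gives $\|\Z^\lambda\|_{W^\alpha_p}^p \lesssim \|\X\|_{W^\alpha_p}^p + \|\Y\|_{W^\alpha_p}^p < +\infty$, so $\Z^\lambda \in W^\alpha_p([0,T];G^N(\R^d))$; together with the projection identity from the second step this shows $\Z^\lambda \in \mathcal{A}(X)$ and completes the proof.
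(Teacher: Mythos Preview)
Your proof is correct and follows the same underlying idea as the paper --- both arguments exploit that $\log\X_t$ and $\log\Y_t$ differ only in the top layer $W_N$, so that BCH degenerates and the convex combination behaves additively there. The execution differs in two places. First, you package the top-layer difference into the single central element $\kappa_t=\log(\X_t^{-1}\otimes\Y_t)$ and use the identity $g\otimes\exp(v)=g+v$ for $v\in W_N$ throughout; the paper instead writes out the layer decompositions $g^i_t,h^i_t$ and carries the cross-bracket term $M_{s,t}$ through three separate BCH expansions before cancelling it. Your route is shorter and makes the role of centrality explicit. Second, for the Sobolev bound the paper passes through the equivalent homogeneous norm $\max_i|\pi_i(\log\cdot)|^{1/i}$ to get a convexity-type estimate, whereas you use sub-additivity of the Carnot--Caratheodory norm together with the dilation identity $\exp(\lambda v)=\delta_{\lambda^{1/N}}\exp(v)$ on $W_N$; both give the needed pointwise control of $d_{cc}(\Z^\lambda_s,\Z^\lambda_t)$ by $d_{cc}(\X_s,\X_t)+d_{cc}(\Y_s,\Y_t)$. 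One small remark: your step ``$\lambda\kappa_t\in\mathcal{K}\Rightarrow\exp(\lambda\kappa_t)\in K$'' uses that $\mathcal{K}$ is a linear subspace of $W_N$ with $K=\exp(\mathcal{K})$, which is the standing convention in the paper but worth making explicit.
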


\begin{proof}
  Fix a $\lambda \in (0,1)$, we write for simplicity $\mathbf{Z}$ instead of $\mathbf{Z}^\lambda$ during the proof. From the definition of $\Z$ it suffices to show that $\Z$ belongs to $W^\alpha_p([0,T];G^{[\frac{1}{\alpha}]}(\R^d))$.
  
  Let $g_t := \log \X_t$ and $h_t := \log \Y_t$ for $t \in [0,T]$. We write $g_t = g^{1}_t + \dots + g^{[\frac{1}{\alpha}]}_t$ and $h_t = h^{1}_t + \dots + h^{[\frac{1}{\alpha}]}_t$ with $g^i_t$ and $h^i_t$ belonging to $W_i$ for $i = 1, \dots , [\frac{1}{\alpha}]$. Since $K \subset \exp(W_{[\frac{1}{\alpha}]})$, $X$ takes values in the quotient group $G^{[\frac{1}{\alpha}]}(\R^d)/K$ and $\X, \Y$ are elements in $\mathcal{A}(X)$, it holds that $g^i_t = h^i_t$ for $i = 1, \dots, [\frac{1}{\alpha}] - 1$. By definition, we have
  $$
    \Z_t = \exp\Big((1-\lambda)g_t + \lambda h_t\Big) = \exp\Big(g^1_t + \dots + g^{[\frac{1}{\alpha}]-1}_t + (1- \lambda)g^{[\frac{1}{\alpha}]}_t + \lambda h^{[\frac{1}{\alpha}]}_t\Big).
  $$
  As a consequence, by Campbell--Baker--Hausdorff formula, we obtain that for all $s<t$ in $[0,T]$, 
  \begin{align*}
    \Z_{s,t} = \Z_s^{-1} \otimes \Z_t = \exp\Big(g^1_{s,t} + \dots + g^{[\frac{1}{\alpha}]-1}_{s,t} + (1- \lambda)g^{[\frac{1}{\alpha}]}_{s,t} + \lambda h^{[\frac{1}{\alpha}]}_{s,t} + M_{s,t} \Big),
  \end{align*}
  where $M_{s,t}$ contains all Lie brackets involving $g^i_s$ and $g^i_t$ for $i = 1, \dots, [\frac{1}{\alpha}] - 1$. On the other hand, again by Campbell--Baker--Hausdorff formula we get
  $$
    \X_{s,t} = \exp\Big(g^1_{s,t} + \dots + g^{[\frac{1}{\alpha}]-1}_{s,t} + g^{[\frac{1}{\alpha}]}_{s,t} + M_{s,t} \Big)
  $$
  and 
  $$
    \Y_{s,t} = \exp\Big(g^1_{s,t} + \dots + g^{[\frac{1}{\alpha}]-1}_{s,t} + h^{[\frac{1}{\alpha}]}_{s,t} + M_{s,t} \Big).
  $$
  Hence, the above calculations reveal that $\log \Z_{s,t} = (1 - \lambda) \log \X_{s,t} + \lambda \log \Y_{s,t}$. Now, using the equivalence of homogeneous norms on $G^{[\frac{1}{\alpha}]}(\R^d)$ (see \cite[Proposition~7]{Lyons2007a}) we observe that
  \begin{align*}
    d_{cc}(\Z_s,\Z_t) = \|\Z_{s,t}\|&\sim \max_{i=1,\dots,[\frac{1}{\alpha}]} |(\log \Z_{s,t})^i|^{\frac{1}{i}} 
    =\max_{i=1,\dots,[\frac{1}{\alpha}]} |(1-\lambda)(\log \X_{s,t})^i + \lambda (\log \Y_{s,t})^i|^{\frac{1}{i}}\\
    &\le (1 - \lambda)^{\alpha}\max_{i=1,\dots,[\frac{1}{\alpha}]} |(\log \X_{s,t})^i|^{\frac{1}{i}} + \lambda^\alpha \max_{i=1,\dots,[\frac{1}{\alpha}]} |(\log \Y_{s,t})^i|^{\frac{1}{i}} \\
    &\lesssim (1 - \lambda)^{\alpha}d_{cc}(\X_s,\X_t) + \lambda^\alpha d_{cc}(\Y_s,\Y_t),
  \end{align*}
  where the proportional constant is independent of $s$ and $t$. Since $\X$ and $\Y$ are elements in $W^\alpha_p([0,T]; G^{[\frac{1}{\alpha}]}(\R^d))$, the above estimates imply that $\Z \in W^\alpha_p([0,T]; G^{[\frac{1}{\alpha}]}(\R^d))$.
  
  The above observations show that
  \begin{align*}
    \Z_{s,t}
    &=1+ \sum_{i =1}^{[\frac{1}{\alpha}]-1}\pi_i(\Y_{s,t}) + (1-\lambda)\pi_{[\frac{1}{\alpha}]}(\X_{s,t}) + \lambda \pi_{[\frac{1}{\alpha}]}(\Y_{s,t}),\quad s,t \in [0,T],
  \end{align*}
  which ensures that $\Z_{s,t} = \X_{s,t} + \lambda(\Y_{s,t} - \X_{s,t})$, and completes the proof.
\end{proof}

Thanks to Lemma~\ref{lem:admissible set is convex}, if $K$ is a closed subgroup of $\exp(W_{[\frac{1}{\alpha}]})$, then for any Sobolev path $X \in  W^{\alpha}_p([0,T];G^{[\frac{1}{\alpha}]}(\R^d)/K)$, the admissible set $\mathcal{A}(X)$ is convex. In particular, the convexity on the Lie group $G^{[\frac{1}{\alpha}]}(\R^d)$ coincides with the classical convexity on vector spaces when we embed $G^{[\frac{1}{\alpha}]}(\R^d)$ into the affine vector space $\bigoplus_{i=0}^{[\frac{1}{\alpha}]}(\R^d)^{\otimes i}$. This will allow us to rely on convex analysis on vector spaces to prove the existence of a (unique) solution to the Optimal Extension Problem~\ref{prob:optimal extension}.

\begin{theorem}\label{thm:existence and uniqueness of optimal extension}
  Let $K$ be a closed subgroup of $\exp(W_{[\frac{1}{\alpha}]})$, $X \in  W^{\alpha}_p([0,T];G^{[\frac{1}{\alpha}]}(\R^d)/K)$ and $F \colon \mathcal{A}(X) \rightarrow \R \cup \{+\infty\}$ be a proper functional defined on $\mathcal{A}(X)$, that is, $F(\X) \in \R$ holds for at least one $\X \in \mathcal{A}(X)$. If $F$ can be extended to a coercive, convex and lower semi-continuous functional $\overline{F}$ defined on the Banach space $W^\alpha_p([0,T]; \bigoplus_{i=1}^{[\frac{1}{\alpha}]} (\R^d)^{\otimes i})$ such that $\overline{F} = +\infty$ outside $\mathcal{A}(X)$, then the Optimal Extension Problem~\ref{prob:optimal extension} admits a solution~$\X^*$ in $\mathcal{A}(X)$. If in addition $\overline{F}$ is strictly convex on $\mathcal{A}(X)$, then the solution~$\X^*$ to \eqref{problem:optimal lift} is unique.
\end{theorem}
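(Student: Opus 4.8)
The plan is to solve Problem~\ref{prob:optimal extension} by the direct method of the calculus of variations, carried out in the reflexive Banach space $\mathcal{W} := W^\alpha_p([0,T]; \bigoplus_{i=1}^{[\frac{1}{\alpha}]} (\R^d)^{\otimes i})$. First I would record that $\mathcal{W}$ is indeed reflexive: since $1 < p < +\infty$, the map $\mathbf{Y} \mapsto \big(\mathbf{Y},\, (s,t)\mapsto \mathbf{Y}_t - \mathbf{Y}_s\big)$ embeds $\mathcal{W}$ isometrically onto a closed subspace of $L^p([0,T]; \bigoplus_i (\R^d)^{\otimes i}) \oplus L^p\big([0,T]^2, |t-s|^{-\alpha p - 1}\dd s \dd t;\, \bigoplus_i (\R^d)^{\otimes i}\big)$, and a closed subspace of a reflexive space is reflexive. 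By Lemma~\ref{lem:admissible set is convex}, viewing $G^{[\frac{1}{\alpha}]}(\R^d)$ inside $\bigoplus_{i=0}^{[\frac{1}{\alpha}]}(\R^d)^{\otimes i}$, the admissible set $\mathcal{A}(X)$ is a convex subset of $\mathcal{W}$ on which the convex combination $\mathbf{C}_\lambda$ coincides with the affine structure of $\mathcal{W}$, i.e.\ $\mathbf{C}_\lambda(\mathbf{X},\mathbf{Y}) = \mathbf{X} + \lambda(\mathbf{Y}-\mathbf{X})$. Since by hypothesis $\overline{F}\colon \mathcal{W}\to\R\cup\{+\infty\}$ is coercive, convex, lower semicontinuous and equals $+\infty$ off $\mathcal{A}(X)$ while restricting to $F$ on $\mathcal{A}(X)$, we have $\inf_{\mathcal{W}}\overline{F} = \inf_{\mathcal{A}(X)} F =: m$, and $m < +\infty$ because $F$ is proper.

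For existence, I would take a minimizing sequence $(\mathbf{X}_n)\subset\mathcal{A}(X)$ with $\overline{F}(\mathbf{X}_n)\to m$. For $n$ large, $\overline{F}(\mathbf{X}_n) < m+1$, and coercivity provides a radius $R$ with $\overline{F}(\mathbf{Y}) > m+1$ whenever $\|\mathbf{Y}\|_{\mathcal{W}} > R$; hence $(\mathbf{X}_n)$ is eventually bounded in $\mathcal{W}$. By reflexivity I extract a subsequence $\mathbf{X}_{n_k}\rightharpoonup \mathbf{X}^*$ in $\mathcal{W}$. Since $\overline{F}$ is convex and strongly lower semicontinuous, its epigraph is convex and strongly closed, hence weakly closed, so $\overline{F}$ is weakly lower semicontinuous; therefore $\overline{F}(\mathbf{X}^*)\le \liminf_k \overline{F}(\mathbf{X}_{n_k}) = m$. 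Because $\overline{F}$ never takes the value $-\infty$ and $m<+\infty$, this forces $m\in\R$ and, since $\overline{F}=+\infty$ outside $\mathcal{A}(X)$, also $\mathbf{X}^*\in\mathcal{A}(X)$ with $F(\mathbf{X}^*)=\overline{F}(\mathbf{X}^*)=m=\min_{\mathcal{A}(X)}F$.

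For uniqueness under the additional assumption that $\overline{F}$ is strictly convex on $\mathcal{A}(X)$, suppose $\mathbf{X}^*_1\neq\mathbf{X}^*_2$ were two solutions of \eqref{problem:optimal lift}. Both lie in $\mathcal{A}(X)$, so by Lemma~\ref{lem:admissible set is convex} the element $\mathbf{Z} := \mathbf{C}_{1/2}(\mathbf{X}^*_1,\mathbf{X}^*_2)$ again lies in $\mathcal{A}(X)$ and equals $\tfrac12(\mathbf{X}^*_1+\mathbf{X}^*_2)$ in $\mathcal{W}$. Strict convexity then yields $\overline{F}(\mathbf{Z}) < \tfrac12\overline{F}(\mathbf{X}^*_1) + \tfrac12\overline{F}(\mathbf{X}^*_2) = m$, contradicting the definition of $m$; hence the minimizer is unique.

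I expect that the only genuinely delicate points are the two preparatory ones: verifying reflexivity of $\mathcal{W}$ (which needs $p\in(1,+\infty)$) and, above all, using Lemma~\ref{lem:admissible set is convex} correctly so that $\mathcal{A}(X)$ is handled as an honest convex subset of the \emph{vector} space $\mathcal{W}$ — this is what makes weak limits in $\mathcal{W}$ legitimate competitors and makes $\mathbf{C}_\lambda$ a true affine combination. Once this bookkeeping is in place, the remainder is the textbook coercivity-plus-weak-lower-semicontinuity argument, with strict convexity supplying uniqueness.
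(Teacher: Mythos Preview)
Your proposal is correct and follows essentially the same approach as the paper: embed $\mathcal{A}(X)$ as a convex subset of the reflexive Banach space $\mathcal{W}=W^\alpha_p([0,T];\bigoplus_i(\R^d)^{\otimes i})$ via Lemma~\ref{lem:admissible set is convex} and then run the direct method; the paper simply cites the textbook result \cite[Theorem~2.5.1]{Zualinescu2002} where you unpack it explicitly. One small point worth making explicit: Lemma~\ref{lem:admissible set is convex} is stated for the two-parameter increments $\Z^\lambda_{s,t}$, whereas you use it at the level of the one-parameter paths in $\mathcal{W}$; this is fine (the proof of the lemma shows $\pi_i(\Z^\lambda_t)=(1-\lambda)\pi_i(\X_t)+\lambda\pi_i(\Y_t)$ for every $t$ and every level $i$), but you should say so, and you also need the ball-box estimate $|\X|_{W^\alpha_p}\lesssim\|\X\|_{W^\alpha_p}$ (with constant depending on $\sup_t\|\X_t\|$) to guarantee that $\mathcal{A}(X)$ actually sits inside $\mathcal{W}$ in the first place.
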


\begin{proof}
  With a slight abuse of notation, let $| \cdot |_{W^\alpha_p}$ denote the Sobolev norm on the reflexive Banach space $W^\alpha_p([0,T]; \bigoplus_{i=1}^{[\frac{1}{\alpha}]} (\R^d)^{\otimes i})$. By the ball-box estimate (cf. \cite[Proposition~7.49]{Friz2010}), for any $\X \in W^{\alpha}_p([0,T];G^{[\frac{1}{\alpha}]}(\R^d))$ it holds that $|\X|_{W^\alpha_p} \lesssim \| \X \|_{W^\alpha_p}$, where the proportional constant only depends on $\sup_{t \in [0,T]}\|\X_t\|_{cc}$. Hence, by Lemma~\ref{lem:admissible set is convex} we can embed $\mathcal{A}(X)$ as a convex subset into the Banach space $W^\alpha_p([0,T]; \bigoplus_{i=1}^{[\frac{1}{\alpha}]} (\R^d)^{\otimes i})$, where the convexity is induced by the usual addition on $\bigoplus_{i=1}^{[\frac{1}{\alpha}]} (\R^d)^{\otimes i}$. Hence, for any $F$ satisfies all properties of Theorem~\ref{thm:existence and uniqueness of optimal extension}, the assertion follows immediately from the standard results in convex analysis on reflexive Banach spaces, see e.g. \cite[Theorem~2.5.1]{Zualinescu2002}.
\end{proof}

\subsection{Example: minimal Sobolev extension}

As an exemplary choice of the selection criterion~$F$, we shall take $F$ to be the inhomogeneous Sobolev norm. In this case, the optimal extension problem~\ref{problem:optimal lift} asks to find the (unique) rough path lift~$\X^*$ of a given path~$X$ which has the minimal inhomogeneous Sobolev norm or, in other words, is of ``minimal length'' among all possible rough path lifts above~$X$.

Recall that for $\X \in W^{\alpha}_p([0,T];G^{[\frac{1}{\alpha}]}(\R^d))$ the \textit{inhomogeneous Sobolev norm} is defined by
\begin{equation}\label{eq:inhomogeneous Sobolev norm}
  \vertiii \X_{W^{\alpha}_p} := \sum_{i =1}^{[\frac{1}{\alpha}]}\Big(\iint_{[0,T]^2} \frac{|\pi_i(\X_{s,t})|^{\frac{p}{i}}}{|t-s|^{\alpha p + 1}} \dd s \dd t\Big)^{\frac{i}{p}},
\end{equation}
cf. \cite{Liu2021b}.

In the following, we shall show that there exists indeed a unique minimizer~$\X^*$ to the optimal extension problem~\ref{problem:optimal lift} with respect to the inhomogeneous Sobolev norm $F (\cdot) := \vertiii \cdot_{W^{\alpha}_p}$. In order to rely on Theorem~\ref{thm:existence and uniqueness of optimal extension}, we shall verify that its assumptions are fulfilled by $F (\cdot) = \vertiii \cdot_{W^{\alpha}_p}$. As a first step, we prove that~$F$ is a strictly convex functional on the admissible set.


\begin{lemma}\label{lem:convexity of the inhomogeneous norm}
  Let $K$ be a closed subgroup of $\exp(W_{[\frac{1}{\alpha}]})$ and $X \in W^{\alpha}_p([0,T];G^{[\frac{1}{\alpha}]}(\R^d)/K)$.
  The functional $F (\cdot)=\vertiii \cdot_{W^{\alpha}_p}\colon \mathcal{A}(X)\to \mathbb{R} $ is strictly convex.
\end{lemma}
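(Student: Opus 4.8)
The plan is to reduce strict convexity of $\vertiii{\cdot}_{W^{\alpha}_p}$ on $\mathcal{A}(X)$ to a strict-convexity statement for a single $L^{q}$-norm sitting at the top level of the rough path, and then to play it off against weak geometricity.

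\textbf{Step 1: reduction to the top level.} After embedding $G^{[\frac{1}{\alpha}]}(\R^d)$ into the affine vector space $\bigoplus_{i=0}^{[\frac{1}{\alpha}]}(\R^d)^{\otimes i}$, Lemma~\ref{lem:admissible set is convex} tells us that $\mathcal{A}(X)$ is convex and $\Z^{\lambda}_{s,t}=\X_{s,t}+\lambda(\Y_{s,t}-\X_{s,t})$. Since $K\subset\exp(W_{[\frac{1}{\alpha}]})$ lies in the centre of $G^{[\frac{1}{\alpha}]}(\R^d)$, the components $\pi_i(\X_{s,t})$ with $i<N:=[\frac{1}{\alpha}]$ do not depend on the chosen lift $\X\in\mathcal{A}(X)$ (they are determined by $X$), whereas $\X\mapsto\pi_N(\X_{\cdot,\cdot})$ is affine on $\mathcal{A}(X)$. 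Writing
\[
  \vertiii{\X}_{W^{\alpha}_p}=\sum_{i=1}^{N}g_i(\X),\qquad g_i(\X):=\Big(\iint_{[0,T]^2}\frac{|\pi_i(\X_{s,t})|^{p/i}}{|t-s|^{\alpha p+1}}\dd s\dd t\Big)^{i/p},
\]
the summands $g_1,\dots,g_{N-1}$ are constant on $\mathcal{A}(X)$, so it suffices to prove that $g_N$ is strictly convex on $\mathcal{A}(X)$. Observe that $g_N(\X)=\|\pi_N(\X_{\cdot,\cdot})\|_{L^{q}(\mu)}$ with $q:=p/N$ and $\mu$ the measure $|t-s|^{-\alpha p-1}\dd s\dd t$ on $[0,T]^2$.

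\textbf{Step 2: strict convexity of the $L^{q}$-norm away from rays.} Since $\alpha>1/p$ forces $p>1/\alpha\ge N$, we have $q>1$; hence $L^{q}(\mu)$ is uniformly convex by Clarkson's inequalities, in particular a strictly convex Banach space. For a norm on a strictly convex space one has $\|(1-\lambda)u+\lambda v\|<(1-\lambda)\|u\|+\lambda\|v\|$ for all $\lambda\in(0,1)$ whenever $u$ and $v$ are not non-negatively proportional (that is, neither is a non-negative scalar multiple of the other). Via the affine identification of Step~1 it therefore remains to check: for any two distinct $\X,\Y\in\mathcal{A}(X)$, the elements $\pi_N(\X_{\cdot,\cdot}),\pi_N(\Y_{\cdot,\cdot})\in L^{q}(\mu)$ are not non-negatively proportional (and, as a special case, $\pi_N(\X_{\cdot,\cdot})$ does not vanish identically).

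\textbf{Step 3: ruling out proportionality (the crux).} Each $\X\in\mathcal{A}(X)$ is $G^{N}(\R^d)$-valued, so every $\X_{s,t}$ is an $N$-step signature and the shuffle relations yield
\[
  \mathrm{sym}\big(\pi_N(\X_{s,t})\big)=\frac{1}{N!}\,x_{s,t}^{\otimes N},\qquad x:=\pi_1(X),
\]
where $\mathrm{sym}$ is the symmetrisation projection of $(\R^d)^{\otimes N}$ onto symmetric tensors; crucially the right-hand side is independent of the lift. Consequently, if $\pi_N(\Y_{\cdot,\cdot})=c\,\pi_N(\X_{\cdot,\cdot})$ for some $c\ge0$, applying $\mathrm{sym}$ gives $(1-c)\,x_{s,t}^{\otimes N}=0$ for all $s,t$; excluding the degenerate case of a constant $\R^d$-valued path $x$, there are $s,t$ with $x_{s,t}\ne0$, so $c=1$, and since $\X$ and $\Y$ already agree on all levels $<N$ this forces $\X=\Y$, a contradiction. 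The same identity shows that $\pi_N(\X_{\cdot,\cdot})\equiv0$ would force $x$ to be constant. Hence no obstruction survives, and $g_N$ — and therefore $F=\vertiii{\cdot}_{W^{\alpha}_p}$ — is strictly convex on $\mathcal{A}(X)$.

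I expect Step~3 to be the main obstacle, precisely because a \emph{norm} (unlike its $q$-th power $\int|\cdot|^{q}\dd\mu$, which for $q>1$ is automatically strictly convex) is strictly convex only off the rays through the origin; there is therefore no way around using the signature structure to show that $\mathcal{A}(X)$, embedded at the top level, stays clear of such rays. Steps~1 and~2 are routine given Lemma~\ref{lem:admissible set is convex} and Clarkson's inequalities.
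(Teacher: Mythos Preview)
Your argument is essentially correct and, in one respect, sharper than the paper's. The paper applies the triangle inequality in $L^{p/i}$ at every level $i$ and asserts that equality there forces $\pi_i(\X_{s,t})=\pi_i(\Y_{s,t})$; but strict (or uniform) convexity of an $L^q$-norm only yields that the two functions lie on the same ray, i.e.\ one is a non-negative multiple of the other. For $i<N$ this is harmless since $\pi_i(\X)=\pi_i(\Y)$ anyway, but at the top level $i=N$ the paper's proof leaves exactly the gap you isolate in Step~3. Your use of the shuffle identity $\mathrm{sym}\,\pi_N(\X_{s,t})=\tfrac{1}{N!}x_{s,t}^{\otimes N}$ to rule out $\pi_N(\Y_{\cdot,\cdot})=c\,\pi_N(\X_{\cdot,\cdot})$ with $c\neq 1$ is the right device, and it is genuinely needed.

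Your caveat about the degenerate case $x=\pi_1(X)$ constant is not cosmetic: the lemma \emph{fails} there as stated. Take $N=2$, $K=\exp(W_2)$, let $X$ be the constant path (so $G^2(\R^d)/K\cong\R^d$), and pick a non-constant $f\colon[0,T]\to W_2$ of the right regularity. Then $\X_t:=\exp(f(t))$ and $\Y_t:=\exp(2f(t))$ both lie in $\mathcal{A}(X)$, are distinct, and satisfy $\pi_2(\Y_{s,t})=2\,\pi_2(\X_{s,t})$. One computes $g_2(\Z^\lambda)=(1+\lambda)g_2(\X)=(1-\lambda)g_2(\X)+\lambda g_2(\Y)$, so $F$ is affine along this segment and strict convexity fails. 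Thus a non-degeneracy hypothesis such as ``$\pi_1(X)$ non-constant'' (which is automatic in the applications, e.g.\ Corollary~\ref{cor:optimal lift of 2-levels paths}) should be added; under it your proof is complete, while the paper's argument, read literally, is not.
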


\begin{proof}
  Fix a $\lambda \in (0,1)$ and $\X, \Y \in \mathcal{A}(X)$, we again write $\mathbf{Z}$ for $\mathbf{Z}^\lambda = \mathbf{C}_\lambda(\X,\Y)$.  By Lemma~\ref{lem:admissible set is convex}, we have $\Z \in \mathcal{A}(X)$ and $\Z_{s,t} = (1-\lambda)\X_{s,t} + \lambda \Y_{s,t}$ for all $s,t \in [0,T]$. Hence, we obtain that for each $i = 1, \dots, [\frac{1}{\alpha}]$,
  \begin{align*}
    \bigg(\iint_{[0,T]^2} \frac{|\pi_i(\Z_{s,t})|^{\frac{p}{i}}}{|t-s|^{\alpha p + 1}}\dd s \dd t \bigg)^{\frac{i}{p}} 
    =  \bigg(\iint_{[0,T]^2} \frac{|(1-\lambda)\pi_i(\X_{s,t}) + \lambda \pi_i(\Y_{s,t})|^{\frac{p}{i}}}{|t-s|^{\alpha p + 1}}\dd s \dd t \bigg)^{\frac{i}{p}}. 
  \end{align*}
  By the uniform convexity of the $L^{\frac{p}{i}}([0,T]^2, \d s \dd t)$-norm (note that $\frac{p}{i} > 1$ for all $i = 1, \dots, [\frac{1}{\alpha}]$) we obtain that
  \begin{align*}
    & \bigg(\iint_{[0,T]^2} \frac{|\pi_i(\Z_{s,t})|^{\frac{p}{i}}}{|t-s|^{\alpha p + 1}}\dd s \dd t \bigg)^{\frac{i}{p}} \\
    &\qquad\quad\le (1-\lambda)  \bigg(\iint_{[0,T]^2} \frac{|\pi_i(\X_{s,t})|^{\frac{p}{i}}}{|t-s|^{\alpha p + 1}}\dd s \dd t \bigg)^{\frac{i}{p}} + \lambda   \bigg(\iint_{[0,T]^2} \frac{|\pi_i(\Y_{s,t})|^{\frac{p}{i}}}{|t-s|^{\alpha p + 1}}\dd s \dd  t\bigg)^{\frac{i}{p}}
  \end{align*}
  and the equality holds only when $\pi_i(\X_{s,t}) = \pi_i(\Y_{s,t})$ for all $s,t \in [0,T]$ since $\X$ and $\Y$ are continuous paths. Now in view of the definition of the inhomogeneous Sobolev norm (cf. \eqref{eq:inhomogeneous Sobolev norm}) we can conclude that $\vertiii \cdot_{W^{\alpha}_p}$ is convex on $\mathcal{A}(X)$. Finally, if $\vertiii \Z_{W^{\alpha}_p} = (1-\lambda)\vertiii \X_{W^{\alpha}_p} + \lambda \vertiii \Y_{W^{\alpha}_p}$ holds, then from the above observations we must have $\pi_i(\X_{s,t}) = \pi_i(\Y_{s,t})$ for all $s,t \in [0,T]$ and for all $i = 1, \dots, [\frac{1}{\alpha}]$, which ensures that $\X = \Y$. This gives the strict convexity of~$\vertiii \cdot_{W^{\alpha}_p}$.
\end{proof}

As a next step, we extend the functional $F(\cdot):=\vertiii \cdot_{W^{\alpha}_p}$ to a functional $\overline{F}$, defined on the whole Sobolev rough path space, in the following way:
\begin{equation}\label{eq:defnition of the cost functional}
  \overline{F}\colon W^{\alpha}_p\Big([0,T]; \bigoplus_{i=1}^{[\frac{1}{\alpha}]} (\R^d)^{\otimes i}\Big) \to \R \cup \{ +\infty \}\quad \text{with}\quad
  \overline{F}(\mathbf{X}) := \begin{cases}
  \vertiii \X_{W^{\alpha}_p},& \text{if } \mathbf{X} \in \mathcal{A}(X), \\
  +\infty,& \text{otherwise}.
  \end{cases}
\end{equation}
As we establish in the next lemma, the functional $\overline{F}$ fulfills the assumptions of Theorem~\ref{thm:existence and uniqueness of optimal extension}.

\begin{lemma}\label{lem:properties of cost functional}
  Let $K$ be a closed subgroup of $\exp(W_{[\frac{1}{\alpha}]})$ and $X \in W^{\alpha}_p([0,T];G^{[\frac{1}{\alpha}]}(\R^d)/K)$.
  The functional $\overline{F}$ defined as in~\eqref{eq:defnition of the cost functional} is strictly convex, coercive and lower semi-continuous.
\end{lemma}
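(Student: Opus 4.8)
The plan is to verify the three asserted properties of $\overline{F}$ separately, using Lemma~\ref{lem:admissible set is convex} and Lemma~\ref{lem:convexity of the inhomogeneous norm} for the convexity, a Fatou-type argument for the lower semi-continuity, and the comparison of the homogeneous and inhomogeneous (semi-)norms on $G^{[\frac{1}{\alpha}]}(\R^d)$ for the coercivity. \emph{Strict convexity.} By Lemma~\ref{lem:admissible set is convex}, the componentwise embedding of $G^{[\frac{1}{\alpha}]}(\R^d)$ into the affine space $\bigoplus_{i=1}^{[\frac{1}{\alpha}]}(\R^d)^{\otimes i}$ turns $\mathcal{A}(X)$ into a convex subset on which the group convex combination $\mathbf{C}_\lambda$ coincides with the usual vector-space one. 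Hence, given $\X,\Y\in W^\alpha_p([0,T];\bigoplus_{i=1}^{[\frac{1}{\alpha}]}(\R^d)^{\otimes i})$ and $\lambda\in(0,1)$: if one of $\X,\Y$ lies outside $\mathcal{A}(X)$ the convexity inequality is trivial because the right-hand side equals $+\infty$, whereas if both lie in $\mathcal{A}(X)$ then so does $(1-\lambda)\X+\lambda\Y$, and Lemma~\ref{lem:convexity of the inhomogeneous norm} gives $\overline{F}((1-\lambda)\X+\lambda\Y)\le(1-\lambda)\overline{F}(\X)+\lambda\overline{F}(\Y)$, with equality forcing $\X=\Y$. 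Thus $\overline{F}$ is strictly convex.

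\emph{Lower semi-continuity.} Since lower semi-continuity is equivalent to closedness of all sublevel sets, I would show that $\{\overline{F}\le c\}=\{\X\in\mathcal{A}(X):\vertiii{\X}_{W^\alpha_p}\le c\}$ is closed in $W^\alpha_p([0,T];\bigoplus_{i=1}^{[\frac{1}{\alpha}]}(\R^d)^{\otimes i})$. Let $\X^{(n)}$ lie in this set with $\X^{(n)}\to\X$ in the Banach norm. Because $\alpha>1/p$, the space $W^\alpha_p([0,T];V)$ embeds into $L^p([0,T];V)$ and into $C([0,T];V)$ (via the Garsia--Rodemich--Rumsey argument of the Remark after Definition~\ref{def:Sobolev geometric rough path}), so I can pass to a subsequence with $\X^{(n_k)}_t\to\X_t$ for a.e.\ $t\in[0,T]$; since $G^{[\frac{1}{\alpha}]}(\R^d)$ is closed in $\bigoplus_{i=1}^{[\frac{1}{\alpha}]}(\R^d)^{\otimes i}$ and $\X$ is continuous, $\X$ is $G^{[\frac{1}{\alpha}]}(\R^d)$-valued, and continuity of the quotient map together with $\pi(\X^{(n_k)}_t)=X_t$ yields $\pi(\X_t)=X_t$ first a.e.\ and then everywhere, i.e.\ $\X\in\mathcal{A}(X)$. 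As $(g,h)\mapsto g^{-1}\otimes h$ is continuous, $\X^{(n_k)}_{s,t}\to\X_{s,t}$ for a.e.\ $(s,t)\in[0,T]^2$, so Fatou's lemma applied to each $\iint_{[0,T]^2}|\pi_i(\X_{s,t})|^{p/i}|t-s|^{-\alpha p-1}\,\dd s\,\dd t$, together with the monotonicity of $x\mapsto x^{i/p}$ and the superadditivity of $\liminf$, gives $\vertiii{\X}_{W^\alpha_p}\le\liminf_k\vertiii{\X^{(n_k)}}_{W^\alpha_p}\le c$. Hence $\X\in\{\overline{F}\le c\}$, and $\overline{F}$ is lower semi-continuous.

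\emph{Coercivity.} For $\X\in\{\overline{F}\le c\}$ each summand of $\vertiii{\X}_{W^\alpha_p}$ is bounded by $c$, so $\iint_{[0,T]^2}|\pi_i(\X_{s,t})|^{p/i}|t-s|^{-\alpha p-1}\,\dd s\,\dd t\le c^{p/i}$ for all $i$. By the equivalence of homogeneous norms on $G^{[\frac{1}{\alpha}]}(\R^d)$ (cf.\ \cite[Proposition~7]{Lyons2007a}, \cite[Chapter~7]{Friz2010}) one has $d_{cc}(\X_s,\X_t)^p\lesssim\sum_i|\pi_i(\X_{s,t})|^{p/i}$, so the homogeneous Sobolev norm $\|\X\|_{W^\alpha_p}$ is bounded in terms of $c$; the Garsia--Rodemich--Rumsey inequality then controls $\sup_{t\in[0,T]}\|\X_t\|_{cc}$, and the ball--box estimate, exactly as in the proof of Theorem~\ref{thm:existence and uniqueness of optimal extension}, gives $|\X|_{W^\alpha_p}\lesssim\|\X\|_{W^\alpha_p}$. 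Consequently $\{\overline{F}\le c\}$ is bounded, i.e.\ $\overline{F}$ is coercive. The technically most delicate point is the lower semi-continuity, specifically the closedness of $\mathcal{A}(X)$ under $W^\alpha_p$-convergence, since this is where one passes from Banach-norm convergence to a.e.\ convergence of the lifts and of their increments; the other two properties are mostly bookkeeping on top of Lemma~\ref{lem:admissible set is convex}, Lemma~\ref{lem:convexity of the inhomogeneous norm} and the standard norm comparisons on $G^{[\frac{1}{\alpha}]}(\R^d)$.
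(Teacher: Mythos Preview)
Your proof is correct and follows essentially the same route as the paper: strict convexity is reduced to Lemma~\ref{lem:convexity of the inhomogeneous norm} via Lemma~\ref{lem:admissible set is convex}, lower semi-continuity comes from pointwise convergence (you obtain it via Garsia--Rodemich--Rumsey and a subsequence, the paper via the embedding $W^\alpha_p\hookrightarrow C^{1/\alpha\text{-}\mathsf{var}}$) together with Fatou, and coercivity uses the equivalence of homogeneous norms plus the ball--box estimate. One cosmetic point: in the lower semi-continuity step you declare $\X\in\mathcal{A}(X)$ before establishing $\vertiii{\X}_{W^\alpha_p}<\infty$; membership in $\mathcal{A}(X)$ requires finiteness of the Carnot--Caratheodory Sobolev norm, which only follows once the Fatou bound and the norm comparison are in place, so that conclusion should come at the end (the paper handles this by first splitting on whether $\liminf\vertiii{\X^n}_{W^\alpha_p}$ is finite).
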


\begin{proof}
  (1) $\overline{F}$ is strictly convex: By construction of $\overline{F}$, it suffices to show that $\vertiii \cdot_{W^{\alpha}_p}$ is strictly convex on $\mathcal{A}(X)$, which is the content of Lemma~\ref{lem:convexity of the inhomogeneous norm}. \smallskip
 
  (2) $\overline{F}$ is coercive: We show that for any $a \in \R$, the set $\{\overline{F} \leq a \}\subset W^{\alpha}_p([0,T]; \bigoplus_{i=1}^{[\frac{1}{\alpha}]} (\R^d)^{\otimes i})$ is bounded. In view of the definition of $\overline{F}$ it is enough to check that 
  $$
    \big\{\mathbf{X} \in \mathcal{A}(X) : \vertiii \X_{W^{\alpha}_p}\leq a\big \}
  $$
  is bounded w.r.t. the $|\cdot|_{W^{\alpha}_p}$-norm on $W^{\alpha}_p([0,T];\bigoplus_{i=1}^{[\frac{1}{\alpha}]} (\R^d)^{\otimes i})$, where $| \cdot |_{W^\alpha_p}$ denotes again the Sobolev norm $W^\alpha_p([0,T]; \bigoplus_{i=1}^{[\frac{1}{\alpha}]} (\R^d)^{\otimes i})$. By the equivalence of homogeneous norms (cf. \cite[Theorem~7.44]{Friz2010}), for any $s,t$ in $[0,T]$ we have
  $$
    d_{cc}(\X_s,\X_t) \sim  \max_{i=1,\dots,[\frac{1}{\alpha}]}|\pi_i(\X_{s,t})|^{\frac{1}{i}},
  $$
  which implies that 
  \begin{equation}\label{eq:bound by inhomogeneous Sobolev norm}
    \|\X\|_{W^{\alpha}_p} \lesssim_{a} \vertiii \X_{W^{\alpha}_p}^{\alpha}.
  \end{equation}
  Furthermore, by Sobolev embedding (see \cite[Theorem~2]{Friz2006}) we have $\|\X\|_{\frac{1}{\alpha}\var} \lesssim \|\X\|_{W^{\alpha}_p}$, and therefore $\sup_{t \in [0,T]}\|\X_{0,t}\|_{cc} \le C$ for some constant $C$ only depends on $a$. From the proof of Theorem~\ref{thm:existence and uniqueness of optimal extension} we know that $|\X|_{W^{\alpha}_p} \lesssim \|\X\|_{W^{\alpha}_p}$. Hence, we can conclude that $|\X|_{W^{\alpha}_p} \le f(a)$ for some continuous increasing function $f$ uniformly over all $\X$ satisfying $\vertiii \X_{W^{\alpha}_p} \le a$.\smallskip
  
  (3) $\overline{F}$ is lower semi-continuous. Suppose that $(\mathbf{X}^n)_{n \ge 1}$ is a sequence converging to $\mathbf{X}$ in $ W^{\alpha}_p([0,T]; \bigoplus_{i=1}^{[\frac{1}{\alpha}]} (\R^d)^{\otimes i})$, and  without loss of generality we may assume that every $\mathbf{X}^n$ is in $\mathcal{A}(X)$. Since $\alpha > \frac{1}{p}$, Sobolev embedding (see \cite[Theorem~2]{Friz2006}) provides that $\X^n$ converges to $\X$ also w.r.t. the $\|\cdot\|_{\frac{1}{\alpha}\var}$-norm, which in turn implies that $\mathbf{X}^n_{s,t}$ converges to $\mathbf{X}_{s,t}$ for every $s,t \in [0,T]$. By Fatou's lemma, we can deduce that $\vertiii \X_{W^{\alpha}_p} \le \liminf_{n \rightarrow \infty}\vertiii {\X^n}_{W^{\alpha}_p}$. If $\liminf_{n \rightarrow \infty} \vertiii{\X^n}_{W^{\alpha}_p} = + \infty$, then we always have $\overline{F}(\mathbf{X}) \leq \liminf_{n \rightarrow \infty} \overline{F}(\mathbf{X}^n)$ (notice that in this case we do not need $\mathbf{X} \in \mathcal{A}(X)$). If $\liminf_{n \rightarrow \infty} \vertiii{\X^n}_{W^{\alpha}_p} < +\infty$, then $\mathbf{X}$ has finite $\vertiii \cdot_{W^{\alpha}_p}$-norm and the pointwise convergence ensures that $\mathbf{X}_t \in G^{[\frac{1}{\alpha}]}(\R^d)$  and $\pi_{G^{[\frac{1}{\alpha}]}(\R^d),G^{[\frac{1}{\alpha}]}(\R^d)/K}(\mathbf{X}_t) = X_t$ hold for all $t \in [0,T]$. These arguments together with the Bound~\eqref{eq:bound by inhomogeneous Sobolev norm} imply that $\mathbf{X} \in \mathcal{A}(X)$. Hence, in this case we have $\overline{F}(\mathbf{X}) \leq \liminf_{n \rightarrow \infty} \overline{F}(\mathbf{X}^n)$ as well.
\end{proof}

Based on Lemma~\ref{lem:convexity of the inhomogeneous norm} and \ref{lem:properties of cost functional} and as an application of Theorem~\ref{thm:existence and uniqueness of optimal extension}, we obtain the existence of a unique rough path lift~$\X^*$ of a given path~$X$, which is of ``minimal length''.

\begin{proposition}\label{prop:optimization of inhomogeneous Sobolev norm}
  Let $K$ be a closed subgroup of $\exp(W_{[\frac{1}{\alpha}]})$ and $X \in  W^{\alpha}_p([0,T];G^{[\frac{1}{\alpha}]}(\R^d)/K)$. Then, there exists a unique rough path lift $\X^* \in \mathcal{A}(X)$ of minimal inhomogeneous Sobolev norm $\vertiii \cdot_{W^{\alpha}_p}$, i.e.,
  $$  
    \vertiii {\X^*}_{W^{\alpha}_p} \le \vertiii{\X}_{W^{\alpha}_p} \quad \text{ for all}\quad \X \in \mathcal{A}(X).
  $$
  In particular, given a path $X \in  W^{\alpha}_p([0,T];\R^d)$ for $\alpha\in (1/3,1/2)$, there exists a unique rough path lift $\X^* \in \mathcal{A}(X)$ such that
  $$  
    \vertiii {\X^*}_{W^{\alpha}_p} \le \vertiii{\X}_{W^{\alpha}_p} \quad \text{for all}\quad \X \in \mathcal{A}(X).
  $$
\end{proposition}

For simplicity let us now take that $T=1$. In Section~\ref{subsec:Sobolev rough path} we introduced an equivalent Sobolev norm $\| \cdot \|_{W^\alpha_p,(1)}$ on the Sobolev space $W^\alpha_p([0,1];G^{[\frac{1}{\alpha}]}(\R^d))$, see formula~\eqref{eq:discrete Sobolev norm}. Correspondingly, as in \cite[(2.4)]{Liu2021}, one can define its inhomogeneous counterpart on $W^\alpha_p([0,1];G^{[\frac{1}{\alpha}]}(\R^d))$, which will be denoted by $\vertiii{\cdot}_{W^\alpha_p,(1)}$ such that 
\begin{equation}\label{eq: inhomogeneous discrete Sobolev norm}
  \vertiii{\X}_{W^\alpha_p,(1)} := \sum_{k=1}^{[\frac{1}{\alpha}]} \vertiii{\X}_{W^\alpha_p,(1);k},
  \quad \text{for}\quad \X \in W^\alpha_p([0,1];G^{[\frac{1}{\alpha}]}(\R^d)),
\end{equation}
where 
\begin{equation*}
  \vertiii{\X}_{W^\alpha_p,(1);k} := \Big(\sum_{j \ge 0}2^{j(\alpha p -1)}\sum_{i = 1}^{2^j}|\pi_k(\X_{ \frac{i-1}{2^j}T, \frac{i}{2^k}T})|^{\frac{p}{k}}\Big)^{\frac{k}{p}}.
\end{equation*}

Let $F(\cdot) := \vertiii{\cdot}_{W^\alpha_p,(1)}$, we will show that such $F$ is a strictly convex functional on the admissible set as the classical inhomogeneous Sobolev norm $\vertiii{\cdot}_{W^\alpha_p}$ introduced before.

\begin{lemma}\label{lem:convexity of the inhomogeneous discrete norm}
  Let $K$ be a closed subgroup of $\exp(W_{[\frac{1}{\alpha}]})$ and $X \in W^{\alpha}_p([0,1];G^{[\frac{1}{\alpha}]}(\R^d)/K)$. The functional $F (\cdot)=\vertiii \cdot_{W^{\alpha}_p,(1)}\colon \mathcal{A}(X)\to \mathbb{R} $ is strictly convex.
\end{lemma}

\begin{proof}
  First let us introduce some notations. For $\sigma \in \R$, $q \in (0,\infty]$ and a Banach space~$U$, the weighted $\ell^q$-space $\ell^\sigma_q(U)$ is defined by 
  $$
    \ell^\sigma_q(U) := \Big\{\xi = (\xi_j)_{j\ge 0}: \xi_j \in U, \|\xi\|_{\ell^\sigma_p(U)} := \Big(\sum_{j\ge 0}(2^{\sigma j}\|\xi_j\|)^q\Big)^{\frac{1}{q}} <+ \infty\Big\}.
  $$
  Fix a $k \in \{1,\ldots,[\frac{1}{\alpha}]\}$ and an $\X \in  W^\alpha_p([0,1];G^{[\frac{1}{\alpha}]}(\R^d))$, we define $\sigma_k:= k(\alpha - 1/p)$, $q_k:= p/k$ and $U_k := L^{q_k}(D,\mu;(\R^d)^{\otimes k})$, where $D$ are the dyadic numbers on $[0,1]$ and $\mu$ is the counting measure on $D$. Furthermore, let $\xi(\X)^k := (\xi(\X)^k_j)_{j \ge 0}$ such that for $a \in D$, $j \ge 1$,
  \begin{align*}
    \xi(\X)^k_j(a) := 
    \begin{cases}
      \pi_k(\X_{ \frac{i-1}{2^j}, \frac{i}{2^k}}),& \text{if } a = \frac{i}{2^j} \text{ for } i = 0,\ldots,2^j - 1,\\
       0,& \text{otherwise},
    \end{cases}
  \end{align*}
  and for $j = 0$,
  \begin{align*}
    \xi(\X)_0(a) := 
    \begin{cases}
      \pi_k(\X_{0,1}),& \text{for } a = 1,\\
      0,& \text{otherwise}.  
    \end{cases}
  \end{align*}
  One can verify that $\xi(\X)^k$ belongs to the space $\ell^{\sigma_k}_{q_k}(U_k)$ and $\|\xi(\X)^k\|_{\ell^{\sigma_k}_{q^k}(U_k)} = \vertiii{\X}_{W^\alpha_p,(1);k}$. Since $\ell^{q_k}$ and $L^{q_k}$-spaces are all uniformly convex when $q_k \in (1,\infty)$, we obtain the strict convexity of the norm $\| \cdot\|_{\ell^{\sigma_k}_{q_k}(U_k)}$ for each $k$. As a consequence, our claim follows by noting that for any $\lambda \in (0,1)$, $\X,\Y \in \mathcal{A}(X)$, its convex combination $\Z := (1-\lambda)\X + \lambda \Y$ is again an element in $\mathcal{A}(X)$ and for each $k$ it holds that $\xi(\Z)^k = (1-\lambda)\xi(\X)^k + \lambda \xi(\Y)^k$, see the proof of Lemma~\ref{lem:convexity of the inhomogeneous norm}.
\end{proof}

Now we extend $F(\cdot) = \vertiii{\cdot}_{W^\alpha_p,(1)}$ to a functional $\overline{F}$ defined on the whole Sobolev space by following the same manner as in~\eqref{eq:defnition of the cost functional}. We will denote by $|\cdot|_{W^\alpha_p,(1)}$ the counterpart of the Sobolev norm~\eqref{eq:discrete Sobolev norm} on the Banach space $W^\alpha_p([0,1];\bigoplus_{i=1}^{[\frac{1}{\alpha}]} (\R^d)^{\otimes i})$ (i.e., by replacing the metric $d_{cc}$ through the Euclidean distance on $\bigoplus_{i=1}^{[\frac{1}{\alpha}]} (\R^d)^{\otimes i}$). By \cite[Theorem~2.2]{Liu2020} we know that $|\cdot|_{W^\alpha_p,(1)}$ is equivalent to the Sobolev norm $|\cdot|_{W^\alpha_p}$. Hence, all arguments for establishing Lemma~\ref{lem:properties of cost functional} remain valid for the current functional $\overline{F}$ and we can deduce that $\overline{F}$ induced by $\vertiii{\cdot}_{W^\alpha_p,(1)}$ is coercive and lower semi-continuous. So, together with Lemma~\ref{lem:convexity of the inhomogeneous discrete norm} we recover all results obtained for the previously defined inhomogeneous Sobolev norm, which will be summarized in the corollary below:

\begin{remark}
  Proposition~\ref{prop:optimization of inhomogeneous Sobolev norm} is also true if one replaces $\vertiii{\cdot}_{W^\alpha_p}$ by $\vertiii{\cdot}_{W^\alpha_p,(1)}$ for $T=1$ and with the obvious modifications if $T\neq 1$.
\end{remark}

An explicit characterization of the optimal rough path lift~$\X^*$ given the inhomogeneous Sobolev norm as selection criterion would certainly be very interesting. However, this seems to be a possibly very hard question outside the scope of the current article. Therefore, we leave it at this point for future research.

\subsection{Example: Stratonovich lift of a Brownian motion}

Analogously to the inhomogeneous Sobolev norm $ \vertiii{\cdot}_{W^\alpha_p,(1)}$ (see \eqref{eq: inhomogeneous discrete Sobolev norm}), one can introduce a discrete inhomogeneous Sobolev distance on $W^\alpha_p([0,1];G^{[\frac{1}{\alpha}]})$ given by
\begin{equation*}
  \hat{\rho}_{W^\alpha_p} (\X^1,\X^2):= \sum_{k =1}^{[\frac{1}{\alpha}]} \hat{\rho}^{(k)}_{W^\alpha_p}(\X^1,\X^2),
  \quad \text{for}\quad \X^1,\X^2 \in W^\alpha_p([0,1];G^{[\frac{1}{\alpha}]}(\R^d)),
\end{equation*}
where 
\begin{equation*}
  \hat{\rho}^{(k)}_{W^\alpha_p}(\X^1,\X^2) := \Big(\sum_{j \ge 0} 2^{j(\alpha p -1)}\sum_{i=1}^{2^j} |\pi_k(\X^1_{(i-1)2^{-j},i2^{-j}} -   \X^2_{(i-1)2^{-j},i2^{-j}})|^{\frac{p}{k}}\Big)^{\frac{k}{p}}.
\end{equation*}

\begin{remark}
  The discrete inhomogeneous Sobolev distance $\hat{\rho}_{W^\alpha_p} $ plays a crucial role in establishing the local Lipschitz continuity of the It{\^o}--Lyons map on the space of Sobolev rough paths, see \cite{Liu2021}.
\end{remark}

The aim of this subsection is to consider the sample paths of a $d$-dimensional Brownian motion $(B_t)_{t\in[0,T]}$ on a probability space $(\Omega,\mathcal{F},\mathbb{P})$. It is well-known that the sample paths of a Brownian motion have Sobolev regularity with $\alpha \in (1/3,1/2)$ and $p\in (1,+\infty)$ almost surely, see e.g. \cite{Rosenbaum2009}. Hence, we focus in this subsection on the case that $\alpha \in (1/3,1/2)$, $p\in (1,+\infty)$, $\alpha > 1/p$ and take again for simplicity $T = 1$.\smallskip 

Given a Sobolev path $X \in W^\alpha_p([0,1];\R^d)$ we denote by $X^n$ the dyadic interpolation of $X$ based on the $n$-th dyadic points on $[0,1]$, that is, for $t \in [\frac{k-1}{2^n},\frac{k}{2^n}]$, 
\begin{equation*}
  X^n_t := X_{\frac{k-1}{2^j}} + 2^n\bigg(t - \frac{k-1}{2^j}\bigg)\Delta^n_kX,
  \quad \text{where}\quad
  \Delta^n_kX := X_{\frac{k}{2^n}} - X_{\frac{k-1}{2^n}}.
\end{equation*}
Furthermore, let $S(X^n)$ be the canonical lift of $X^n$ into $G^2(\R^d)$, namely 
\begin{equation*}
  S(X^n) := (1,X^n, \mathbb{X}^n)\quad \text{with}\quad \mathbb{X}^{n}_{s,t} := \int_s^t X^n_{s,r} \otimes \d X^n_{r}.
\end{equation*}
A direct calculation shows that (or see \cite{Ledoux2002}) for all $n \ge m$,
\begin{equation*}
  \mathbb{X}^m_{\frac{k-1}{2^n}, \frac{k}{2^n}} = \frac{1}{2}(2^{m-n})^2 \Delta^m_lX \otimes \Delta^m_lX,
\end{equation*}
where $l$ is the unique integer such that 
\begin{equation*}
  \frac{l-1}{2^m} \le \frac{k-1}{2^n} < \frac{k}{2^n} \le \frac{l}{2^m};
\end{equation*}
and for $n \le m$, it holds that
\begin{equation*}
  \mathbb{X}^m_{\frac{k-1}{2^n}, \frac{k}{2^n}} = \frac{1}{2}\Delta^n_kX \otimes \Delta^n_kX + \frac{1}{2}\sum_{r<l, r,l=2^{m-n}(k-1)+1}^{2^{m-n}k}\Big(\Delta^m_rX \otimes \Delta^m_lX - \Delta^m_lX \otimes \Delta^m_rX\Big).
\end{equation*}
For $m\in \N$, we define
\begin{equation}\label{eq:Stratonovich approximation functional}
  F^m(\X) := \hat{\rho}_{W^\alpha_p} (\X,S(X^m)), \quad \X\in W^\alpha_p([0,1];G^{2}(\R^d)),
\end{equation}
and 
\begin{equation*}\label{eq:Stratonovich functional}
  F(\X):= \limsup_{n\to \infty} F^m(\X).
\end{equation*}
In the following we consider the optimization problem
\begin{equation}\label{eq:Stratonovich is optimal}
  \min_{\X\in \mathcal{A}(X)} F(\X),
\end{equation}
where $F$ is defined as in \eqref{eq:Stratonovich functional} and we recall that $\mathcal{A}(X)$ denotes all possible rough path lifts above the given Sobolev path $X\in W^\alpha_p([0,1];\R^d)$.\smallskip

Coming back to the Brownian motion $(B_t)_{t\in[0,1]}$, we recall that it can be lifted almost surely to a rough path $\mathbf{B}^s=(1,B,\mathbb{B}^{\text{s}})$ via Stratonovich integration, see e.g. \cite[Chapter~13]{Friz2010}. It turns out that the rough path $\mathbf{B}^s=(1,B,\mathbb{B}^{\text{s}})$ is the unique optimal solution to the optimization problem~\eqref{eq:Stratonovich is optimal} almost surely.

\begin{theorem}\label{thm:Stratonovich is optimal}
  For almost all $\omega\in \Omega$, the Stratonovich rough path lift 
  \begin{equation*}
    \mathbf{B}^s(\omega)=(1,B(\omega),\mathbb{B}^{\text{s}}(\omega))\in W^\alpha_p([0,1];G^2(\R^d))
  \end{equation*}
  is the unique minimizer of the optimization problem~\eqref{eq:Stratonovich is optimal} given $F$ as in \eqref{eq:Stratonovich functional} and $\mathcal{A}(B(\omega))$.
\end{theorem}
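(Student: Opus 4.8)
The plan is to reduce the theorem to the single almost-sure statement
\[
  \hat{\rho}_{W^\alpha_p}\big(S(B^m),\mathbf{B}^s\big)\longrightarrow 0\qquad(m\to\infty).
\]
Granting this, the triangle inequality for $\hat{\rho}_{W^\alpha_p}$ --- which, exactly as in the proof of Lemma~\ref{lem:convexity of the inhomogeneous discrete norm}, is the norm of a difference of sequences in $\bigoplus_{k}\ell^{\sigma_k}_{q_k}(U_k)$, hence a genuine metric on $W^\alpha_p([0,1];G^2(\R^d))$ that separates points by continuity of rough paths --- gives $|F^m(\X)-\hat{\rho}_{W^\alpha_p}(\X,\mathbf{B}^s)|\le\hat{\rho}_{W^\alpha_p}(S(B^m),\mathbf{B}^s)\to0$, so that
\[
  F(\X)=\limsup_{m\to\infty}F^m(\X)=\hat{\rho}_{W^\alpha_p}(\X,\mathbf{B}^s)\qquad\text{for every }\X\in\mathcal{A}(B).
\]
Since the sample paths of $B$ lie in $W^\alpha_p([0,1];\R^d)$ and the Stratonovich lift obeys $\mathbf{B}^s\in W^\alpha_p([0,1];G^2(\R^d))$ almost surely, we have $\mathbf{B}^s\in\mathcal{A}(B)$ a.s.; as $F=\hat{\rho}_{W^\alpha_p}(\cdot,\mathbf{B}^s)\ge0$ vanishes precisely at $\mathbf{B}^s$, it is the unique minimizer of \eqref{eq:Stratonovich is optimal}. (One cannot invoke Theorem~\ref{thm:existence and uniqueness of optimal extension} directly here: $F$ is convex on $\mathcal{A}(B)$, being a pointwise limit of the non-increasing sequence of convex functionals $\sup_{m\ge N}F^m$, but the distance to a fixed point is not strictly convex, so uniqueness has to be read off from the identification $F=\hat{\rho}_{W^\alpha_p}(\cdot,\mathbf{B}^s)$.)

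To establish the displayed convergence I would estimate $\hat{\rho}^{(k)}_{W^\alpha_p}(S(B^m),\mathbf{B}^s)$ for $k=1,2$ by splitting the dyadic sum over the scale $j$ into the blocks $j\le m$ and $j>m$. In the block $j>m$ the interpolant $B^m$ is affine on each level-$j$ interval, so by the explicit formulas recorded before the theorem the increments of $\pi_1(S(B^m))$ over such intervals equal $2^{m-j}\Delta^m_lB$ and those of $\pi_2(S(B^m))$ equal $\tfrac12 2^{2(m-j)}\Delta^m_lB\otimes\Delta^m_lB$, with $l$ the enclosing level-$m$ index. Carrying out the $i$-sum (each level-$m$ interval contains $2^{j-m}$ level-$j$ ones) and the remaining geometric $j$-sum --- convergent since $\alpha<1$ --- bounds the $j>m$ part of $\hat{\rho}^{(k)}_{W^\alpha_p}(S(B^m),\mathbf{B}^s)$ by a constant times $2^{m(\alpha-\frac12)}\big(2^{-m(1-p/2)}\sum_{l=1}^{2^m}|\Delta^m_lB|^p\big)^{1/p}$ plus the tails $\big(\sum_{j>m}2^{j(\alpha p-1)}\sum_i|\pi_k(\mathbf{B}^s_{(i-1)2^{-j},i2^{-j}})|^{p/k}\big)^{k/p}$. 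Now $2^{-m(1-p/2)}\sum_l|\Delta^m_lB|^p$ converges almost surely to a finite Gaussian constant by Chebyshev (its variance is $O(2^{-m})$) and Borel--Cantelli, $2^{m(\alpha-\frac12)}\to0$ because $\alpha<\tfrac12$, and the tails vanish because $\vertiii{\mathbf{B}^s}_{W^\alpha_p,(1)}<\infty$ a.s.; hence the $j>m$ block tends to $0$ almost surely.

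For the block $j\le m$ the endpoints $\tfrac{i-1}{2^j},\tfrac{i}{2^j}$ are level-$m$ dyadic points, so $B^m$ coincides with $B$ there: the level-one contribution vanishes identically, while the level-two contribution is $\sum_i|\mathbb{X}^m_{(i-1)2^{-j},i2^{-j}}-\mathbb{B}^{\mathrm s}_{(i-1)2^{-j},i2^{-j}}|^{p/2}$, i.e.\ the error between the level-$m$ polygonal L\'evy area and the true Stratonovich area over each level-$j$ interval. These error increments lie in the second Wiener chaos, so by hypercontractivity $\mathbb{E}\big[|\mathbb{X}^m_{s,t}-\mathbb{B}^{\mathrm s}_{s,t}|^{p/2}\big]\lesssim\big(\mathbb{E}|\mathbb{X}^m_{s,t}-\mathbb{B}^{\mathrm s}_{s,t}|^2\big)^{p/4}\lesssim(2^{-m}|t-s|)^{p/4}$, where $\mathbb{E}|\mathbb{X}^m_{s,t}-\mathbb{B}^{\mathrm s}_{s,t}|^2\lesssim 2^{-m}|t-s|$ is the classical $L^2$-rate for dyadic polygonal approximations of Brownian motion. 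Summing over $i$ and $j\le m$ then gives $\mathbb{E}\big[\big(\text{the $j\le m$ part of }\hat{\rho}^{(2)}_{W^\alpha_p}(S(B^m),\mathbf{B}^s)\big)^{p/2}\big]\lesssim 2^{-mp/4}\sum_{j\le m}2^{jp(\alpha-\frac14)}\lesssim\max\{\,m\,2^{-mp/4},\,2^{mp(\alpha-\frac12)}\}$, which is summable in $m$; Chebyshev together with Borel--Cantelli therefore shows that the $j\le m$ block tends to $0$ almost surely as well. Combining the two blocks yields the displayed convergence and completes the proof.

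I expect the block $j\le m$ to be the crux: it amounts to controlling the L\'evy-area approximation error simultaneously across all scales $j\le m$ in the discrete Sobolev norm, and this is where the precise decay rate $\mathbb{E}|\mathbb{X}^m_{s,t}-\mathbb{B}^{\mathrm s}_{s,t}|^2\lesssim 2^{-m}|t-s|$ together with the second-chaos moment equivalence is indispensable --- a weaker rate would already fail for $\alpha$ close to $1/2$. By contrast, the treatment of the block $j>m$, the finiteness and the triangle inequality for $\hat{\rho}_{W^\alpha_p}$, the membership $\mathbf{B}^s\in\mathcal{A}(B)$, and the Borel--Cantelli bookkeeping are all routine.
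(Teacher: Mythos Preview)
Your proof is correct and follows essentially the same strategy as the paper: both reduce the theorem to the almost-sure convergence $\hat{\rho}_{W^\alpha_p}(S(B^m),\mathbf{B}^s)\to 0$, and both derive this from moment estimates split across dyadic scales together with Borel--Cantelli. The main organizational difference is that the paper (in Proposition~\ref{prop: stratonovich lift can be approx. by minimizers}) establishes the convergence by showing that $(S(B^m))_m$ is \emph{Cauchy} in $\hat{\rho}_{W^\alpha_p}$ --- estimating the consecutive differences $\mathbb{B}^{m+1}-\mathbb{B}^m$ via the bounds of \cite{Ledoux2002} --- and then identifies the limit through the known $q$-variation convergence, whereas you estimate $\hat{\rho}_{W^\alpha_p}(S(B^m),\mathbf{B}^s)$ directly. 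The paper's Cauchy route has the advantage that $\mathbf{B}^s\in W^\alpha_p$ comes out as a consequence rather than an input; your direct route requires this membership up front (for the tail argument in the block $j>m$), but yields the cleaner statement $F(\X)=\hat{\rho}_{W^\alpha_p}(\X,\mathbf{B}^s)$ on all of $\mathcal{A}(B)$, which the paper only uses implicitly in its uniqueness step.
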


Before proving Theorem~\ref{thm:Stratonovich is optimal}, we need to do some preliminary work. Using the piecewise linear approximations along the dyadic points, we obtain the following lemma.

\begin{lemma}\label{lemma: optimizer for each dyadic interpolcation}
  Let $\alpha \in (1/3,1/2)$ and $p \in (1,+\infty)$ be such that $\alpha > 1/p$. Let $X \in W^\alpha_p([0,1];\R^d)$ be fixed and let $X^n$ be defined as above. For each $m\in \mathbb{N}$ one has:
  \begin{enumerate}
    \item $S(X^m)$ belongs to $W^\alpha_p([0,1];G^2(\R^d))$. 
    \item Given $X$ and the functional $F^m$ as defined in \eqref{eq:Stratonovich approximation functional}, the Optimal Extension Problem~\ref{prob:optimal extension} admits a unique minimizer in $\mathcal{A}(X)$.
  \end{enumerate}
\end{lemma}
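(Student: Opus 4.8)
The plan is to prove (1) by an elementary Lipschitz estimate and (2) by realizing $F^m$ as a distance--to--a--point functional on the convex set $\mathcal{A}(X)$, combining Theorem~\ref{thm:existence and uniqueness of optimal extension} for existence with a direct convexity argument for uniqueness.

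For (1), note that $X^m$ is piecewise affine with $2^m$ pieces, hence Lipschitz and of bounded variation, so $S(X^m)$ is well defined; moreover $d_{cc}(S(X^m)_s,S(X^m)_t)=\|S(X^m)_{s,t}\|\lesssim\|X^m\|_{1\text{-}\mathsf{var};[s,t]}\le C_m|t-s|$ with $C_m:=2^m\max_{1\le k\le 2^m}|\Delta^m_kX|$. Plugging this into the definition of the fractional Sobolev norm gives $\|S(X^m)\|_{W^\alpha_p}^p\lesssim C_m^p\iint_{[0,1]^2}|t-s|^{p(1-\alpha)-1}\dd s\dd t<+\infty$, since $p(1-\alpha)-1>-1$; thus $S(X^m)\in W^\alpha_p([0,1];G^2(\R^d))$. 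In particular $F^m=\hat{\rho}_{W^\alpha_p}(\cdot,S(X^m))$ is finite on $\mathcal{A}(X)$ (by the triangle inequality for $\hat{\rho}_{W^\alpha_p}$ together with the equivalence of the discrete and continuous Sobolev norms from \cite[Theorem~2.2]{Liu2020}), hence a proper functional there.

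For (2), here $[\tfrac1\alpha]=2$ and $\mathcal{A}(X)$ is the set of all $G^2(\R^d)$-valued rough path lifts of the $\R^d$-path $X$, corresponding to $K=\exp(W_2)=\exp(W_{[\frac1\alpha]})$. By Lemma~\ref{lem:admissible set is convex}, $\mathcal{A}(X)$ embeds as a convex affine subset into the reflexive Banach space $V:=W^\alpha_p([0,1];\bigoplus_{i=1}^{2}(\R^d)^{\otimes i})$; along this embedding $\pi_1(\X_{s,t})=X_t-X_s$ is the \emph{same} for every $\X\in\mathcal{A}(X)$, while $\X\mapsto\pi_2(\X_{s,t})$ is affine. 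Extending $F^m$ by $+\infty$ off $\mathcal{A}(X)$ to $\overline{F^m}\colon V\to\R\cup\{+\infty\}$, one verifies \emph{exactly} as in Lemma~\ref{lem:properties of cost functional} that $\overline{F^m}$ is convex (it is the inhomogeneous discrete Sobolev norm $\hat{\rho}_{W^\alpha_p}(\cdot,S(X^m))$ composed with an affine map), coercive (triangle inequality for $\hat{\rho}_{W^\alpha_p}$, the norm equivalence of \cite[Theorem~2.2]{Liu2020}, and the embedding $W^\alpha_p\hookrightarrow C^{\frac{1}{\alpha}\text{-}\mathsf{var}}$ of \cite[Theorem~2]{Friz2006}), and lower semi-continuous (Fatou's lemma applied after passing to pointwise convergence of the increments via the same embedding, plus closedness of $\mathcal{A}(X)$). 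Theorem~\ref{thm:existence and uniqueness of optimal extension} then produces a minimizer $\X^{*,m}\in\mathcal{A}(X)$. For uniqueness, fix a base point $\X_0\in\mathcal{A}(X)$ and write, on $\mathcal{A}(X)$, $F^m(\X)=c_1+\bigl\|L(\X-\X_0)+c_2\bigr\|$, where $c_1=\hat{\rho}^{(1)}_{W^\alpha_p}(\cdot,S(X^m))$ is constant (the first level never varies), $L$ is the linear map sending the second-level difference to its family of dyadic increments inside the weighted space $\ell^{\sigma_2}_{q_2}(L^{q_2})$ with $q_2=p/2>1$, $c_2$ collects the corresponding fixed dyadic second-level increments of $S(X^m)$, and $\|\cdot\|$ is the norm of that space, which is uniformly convex by Clarkson's inequality exactly as in Lemma~\ref{lem:convexity of the inhomogeneous discrete norm}. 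The map $L$ is injective: two lifts in $\mathcal{A}(X)$ with equal dyadic second-level increments agree on all dyadics, hence everywhere by continuity of rough paths. If $\X_1,\X_2$ both minimize, convexity of $\overline{F^m}$ makes their midpoint a minimizer, so equality holds in the triangle inequality for $\|\cdot\|$; uniform convexity forces $L(\X_1-\X_0)+c_2$ and $L(\X_2-\X_0)+c_2$ to be positively proportional with equal norm, hence equal, and injectivity of $L$ yields $\X_1=\X_2$. (Equivalently, when $X\neq X^m$ the symmetric part $\tfrac12(X^m_t-X^m_s)^{\otimes 2}$ of $\pi_2(S(X^m)_{s,t})$ differs on some dyadic interval from $\tfrac12(X_t-X_s)^{\otimes 2}$, so $c_2\notin\mathrm{range}(L)$ and $\overline{F^m}$ is genuinely strictly convex on $\mathcal{A}(X)$; the degenerate case $X=X^m$ is trivial since then $S(X^m)=S(X)\in\mathcal{A}(X)$ is the obvious unique minimizer.)

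I expect the uniqueness part to be the main obstacle: unlike the inhomogeneous Sobolev norm itself, a distance--to--a--point functional is not automatically strictly convex, so one must exploit the fine structure of $\mathcal{A}(X)$ --- that only the top level varies and varies affinely (Lemma~\ref{lem:admissible set is convex}), that the top-level discrete Sobolev norm is uniformly convex, and that the dyadic-increment map is injective (via \cite[Theorem~2.2]{Liu2020} and continuity of rough paths) --- and, if one wishes to invoke the uniqueness clause of Theorem~\ref{thm:existence and uniqueness of optimal extension} verbatim, to dispose of the harmless case $X=X^m$ separately. The remaining verifications (coercivity, lower semi-continuity, finiteness of $F^m$) are routine adaptations of Lemma~\ref{lem:properties of cost functional}.
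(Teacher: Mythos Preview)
Your proof is correct and follows essentially the same strategy as the paper: for (1) you use the Lipschitz bound on $X^m$ just as the paper does (the paper phrases it via $W^1_p$ and \cite[Exercise~7.60]{Friz2010}, but the content is identical), and for (2) you verify properness, coercivity and lower semi-continuity exactly as in Lemma~\ref{lem:properties of cost functional} and then appeal to Theorem~\ref{thm:existence and uniqueness of optimal extension}. The only difference is in the uniqueness step: the paper simply writes ``following the same line as the proof of Lemma~\ref{lem:convexity of the inhomogeneous discrete norm}, one can check that $F^m$ is strictly convex on $\mathcal{A}(X)$'', whereas you (rightly) note that a distance-to-a-point functional is not automatically strictly convex and supply the missing structural input explicitly---that on $\mathcal{A}(X)$ only the top level varies, that the dyadic-increment map on that level is injective, and that the common symmetric part forces the proportionality constant to be~$1$ (with the harmless degenerate case $X=X^m$ handled separately). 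Your version is thus a more careful execution of the same argument rather than a different route.
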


\begin{proof}
  (1) Since every $X^m$ is Lipschitz continuous and therefore belongs to the Sobolev space $W^1_p([0,1];\R^d)$ for $1< p < +\infty$, by \cite[Exercise~7.60]{Friz2010} we obtain that 
  $$
    S(X^m) \in W^1_p([0,1];\R^d)
    \quad \text{with}\quad 
    \|S(X^m)\|_{W^1_p} = \|X^m\|_{W^1_p}.
  $$ 
  This indeed implies that $S(X^m) \in W^\alpha_p([0,1];G^2(\R^d))$.
  
  Alternatively, one can take use of the explicit formula of $\mathbb{X}^m_{\frac{k-1}{2^n},\frac{k}{2^n}}$ and the discrete characterization of Sobolev spaces (see \cite[Theorem~2.2]{Liu2020}) as before to compute that
  \begin{align*}
    \|S(X^m)\|_{W^\alpha_p}^p& \sim \sum_{n=0}^\infty2^{n(\alpha p -1)}\sum_{k=1}^{2^n}d_{cc}(S(X^m)_{\frac{k-1}{2^n}},S(X^m)_{\frac{k}{2^n}})^p \\
    &\le C_m \sum_{n=0}^m2^{n(\alpha p -1)}\sum_{k=1}^{2^n}|X_{\frac{k-1}{2^n}} - X_{\frac{k}{2^n}}|^p \\
    &\le C_m \|X\|_{W^\alpha_p}^p < +\infty,
  \end{align*}
  where the constant $C_m$ may depend on $m$. 
  
  (2) Following the same line as of the proof of Lemma~\ref{lem:convexity of the inhomogeneous discrete norm}, one can check that the functional $F^m$ is strictly convex on $\mathcal{A}(X)$. Moreover, since $S(X^m) \in W^\alpha_p([0,1];G^2(\R^d))$, it holds that $F^m(\X) < +\infty$ for all $\X \in \mathcal{A}(X)$, which means that $F^m$ is proper on $\mathcal{A}(X)$. By the triangle inequality together with the discrete characterization of Sobolev spaces (\cite[Theorem~2.2]{Liu2020}) we see that $F^m$ is coercive and by Fatou's lemma, $F^m$ is lower semi-continuous. Therefore, we conclude that $F^m$ admits a unique minimizer in $\mathcal{A}(X)$ by using Theorem~\ref{thm:existence and uniqueness of optimal extension}.
\end{proof}

If $X$ treated in Lemma~\ref{lemma: optimizer for each dyadic interpolcation} is a typical sample path of Brownian motion, then we can say more about the behaviour of the sequence of minimizers associated to $F^m$. The next proposition says that in this case these minimizers converge to the Stratonovich lift of the Brownian motion with respect to the inhomogeneous Sobolev metric $\hat{\rho}_{W^\alpha_p}$.

\begin{proposition}\label{prop: stratonovich lift can be approx. by minimizers}
  Let $B$ be an $\R^d$-valued Brownian motion. Let $B^m$ denote the dyadic interpolation of $B$ based on the $m$-th dyadic points in $[0,1]$ as before. Let $S(B^m)$ be the canonical lift of $B^m$. Let $\alpha \in (1/3,1/2)$ and $p \in (1,+\infty)$ be such that $\alpha > 1/p$. Then, it holds:
  \begin{enumerate}
    \item For almost all $\omega$, the sequence $S(B^m(\omega))$ converges to the Stratonovich lift $\mathbf{B}^{s}(\omega)$ with respect to the inhomogeneous Sobolev metric $\hat{\rho}_{W^\alpha_p}$. In particular, one almost surely has 
    $$
      \mathbf{B}^s(\omega) \in W^\alpha_p([0,1];G^2(\R^d)).
    $$ 
    \item Let $F^m$ be the functional defined in \eqref{eq:Stratonovich approximation functional} with $S(X^m)$ replaced by $S(B^m)$. Let $\X^{*,m}(\omega)$ denote the unique minimizer of $F^m$ over $\mathcal{A}(B(\omega))$. Then $\X^{*,m}$ converges to the Stratonovich lift $\mathbf{B}^s$ with respect to $\hat{\rho}_{W^\alpha_p}$ almost surely.
  \end{enumerate}
\end{proposition}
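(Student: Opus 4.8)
The plan is to prove part~(1) -- the almost sure convergence $S(B^m)\to\mathbf{B}^s$ in the discrete inhomogeneous Sobolev metric $\hat\rho_{W^\alpha_p}$ -- and then to deduce part~(2) immediately from the optimality of $\X^{*,m}$ together with the fact that $\mathbf{B}^s$ itself belongs to the admissible set.

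For part~(1) I first note that $1/p<\alpha<1/2$ forces $p>2$, hence $p/k>1$ for $k=1,2$, so that $\hat\rho_{W^\alpha_p}$ is built from genuine weighted $\ell^{p/k}$-norms over the dyadic intervals of $[0,1]$ and, in particular, obeys the triangle inequality. It then suffices to show
\[
  \sum_{m\ge1}\mathbb{E}\big[\hat\rho^{(k)}_{W^\alpha_p}(S(B^m),\mathbf{B}^s)^{p/k}\big]<\infty\qquad(k=1,2),
\]
since Tonelli's theorem and the Borel--Cantelli lemma then yield $\hat\rho_{W^\alpha_p}(S(B^m),\mathbf{B}^s)\to0$ almost surely. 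I would estimate each expectation by splitting the sum over dyadic levels $j$ at $j=m$. For levels $j>m$ (intervals finer than the interpolation mesh) the explicit formulas for $\mathbb{X}^m$ recalled above give, on any such interval $I=[(i-1)2^{-j},i2^{-j}]$, $\pi_1(S(B^m)_I)=2^{m-j}\Delta^m_lB$ and $\pi_2(S(B^m)_I)=\tfrac12(2^{m-j})^2\Delta^m_lB\otimes\Delta^m_lB$, whence $\mathbb{E}|\pi_k(S(B^m)_I)|^{p/k}\lesssim 2^{(m-j)p}2^{-mp/2}$; on the other hand $\pi_1(\mathbf{B}^s_I)=B_I$ and $\pi_2(\mathbf{B}^s_I)=\tfrac12B_I^{\otimes2}+A_I$ with $A$ the Lévy area, so $\mathbb{E}|\pi_k(\mathbf{B}^s_I)|^{p/k}\lesssim 2^{-jp/2}$. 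Summing over the $2^j$ intervals of level $j$ and over $j>m$ produces a contribution $\lesssim 2^{mp(\alpha-1/2)}$, which is summable in $m$ since $\alpha<1/2$. For levels $j\le m$ there is nothing to estimate at level $k=1$, because $B^m$ agrees with $B$ at every dyadic point of level $\le m$, so $\pi_1(S(B^m)_I)=\pi_1(\mathbf{B}^s_I)$; at level $k=2$ the symmetric parts $\tfrac12\Delta^j_iB^{\otimes2}$ also cancel, leaving $\pi_2(S(B^m)_I-\mathbf{B}^s_I)=\delta^m_{j,i}$, the error of the scale-$m$ dyadic approximation of the Lévy area of $B$ over the level-$j$ interval $I$.

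The crux of the argument is the bound $\mathbb{E}|\delta^m_{j,i}|^2\lesssim 2^{-(j+m)}$. Here I would use that $\delta^m_{j,i}$ lies in the second Wiener chaos, so that all its $L^q$-moments are comparable to its $L^2$-moment; by Brownian scaling $\delta^m_{j,i}$ has the law of $2^{-j}$ times the scale-$(m-j)$ dyadic Lévy-area approximation error of a standard Brownian motion on $[0,1]$, and the classical estimate $\mathbb{E}|A^{(\ell)}_{0,1}-A_{0,1}|^2\lesssim 2^{-\ell}$ (obtained from the telescoping $A_{0,1}-A^{(\ell)}_{0,1}=\sum_{k\ge\ell}(A^{(k+1)}_{0,1}-A^{(k)}_{0,1})$, the independence of the mean-zero refinement defects inside each level, and Minkowski's inequality across levels) gives the claim. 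Consequently $\mathbb{E}|\delta^m_{j,i}|^{p/2}\lesssim 2^{-(j+m)p/4}$, and therefore
\[
  \sum_{j=0}^m 2^{j(\alpha p-1)}\sum_{i=1}^{2^j}\mathbb{E}|\delta^m_{j,i}|^{p/2}\;\lesssim\; 2^{-mp/4}\sum_{j=0}^m 2^{jp(\alpha-1/4)}\;\lesssim\; \max\{2^{-mp/4},\,2^{mp(\alpha-1/2)}\}\,(1+m),
\]
which is again summable in $m$ since $\alpha<1/2$. Adding the two ranges of $j$ proves the summability, hence $\hat\rho_{W^\alpha_p}(S(B^m),\mathbf{B}^s)\to0$ a.s.; and since $S(B^m)\in W^\alpha_p([0,1];G^2(\R^d))$ by Lemma~\ref{lemma: optimizer for each dyadic interpolcation}(1), the triangle inequality for $\hat\rho_{W^\alpha_p}$ then also gives $\mathbf{B}^s\in W^\alpha_p([0,1];G^2(\R^d))$ almost surely.

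For part~(2) I would work on the almost sure event on which $B\in W^\alpha_p([0,1];\R^d)$, $\mathbf{B}^s\in W^\alpha_p([0,1];G^2(\R^d))$ and $\hat\rho_{W^\alpha_p}(S(B^m),\mathbf{B}^s)\to0$. Since $\pi_1(\mathbf{B}^s)=B$, on this event $\mathbf{B}^s\in\mathcal{A}(B)$, so the optimality of $\X^{*,m}$ for $F^m(\cdot)=\hat\rho_{W^\alpha_p}(\cdot,S(B^m))$ over $\mathcal{A}(B)$ (Lemma~\ref{lemma: optimizer for each dyadic interpolcation}(2)) gives
\[
  \hat\rho_{W^\alpha_p}(\X^{*,m},S(B^m))=F^m(\X^{*,m})\le F^m(\mathbf{B}^s)=\hat\rho_{W^\alpha_p}(\mathbf{B}^s,S(B^m)),
\]
and the right-hand side tends to $0$ as $m\to\infty$ by part~(1); the triangle inequality then yields $\hat\rho_{W^\alpha_p}(\X^{*,m},\mathbf{B}^s)\le\hat\rho_{W^\alpha_p}(\X^{*,m},S(B^m))+\hat\rho_{W^\alpha_p}(S(B^m),\mathbf{B}^s)\to0$, which is the assertion. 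The only genuinely delicate point in the whole proof is the quantitative $L^2$-decay $\mathbb{E}|\delta^m_{j,i}|^2\lesssim 2^{-(j+m)}$ of the dyadic Lévy-area approximation error -- it is the decay in $m-j$, and not merely the correct scaling in $2^{-j}$, that makes the level-two sum summable in $m$ -- together with the routine bookkeeping needed to upgrade the moment bounds to an almost sure statement; the increment formulas, the Gaussian and second-chaos moment estimates, and the optimality argument of part~(2) are all standard.
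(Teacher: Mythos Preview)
Your proof is correct and takes a route that differs from the paper's in part~(1), while part~(2) is essentially identical to the paper's argument.

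For part~(1), the paper does \emph{not} estimate $\hat\rho_{W^\alpha_p}(S(B^m),\mathbf{B}^s)$ directly. Instead it shows that $(S(B^m))_m$ is Cauchy in $\hat\rho_{W^\alpha_p}$ by bounding the consecutive differences $\mathbb{B}^{m+1}-\mathbb{B}^m$ (quoting the moment estimates of Ledoux--Lyons--Qian, Eqs.~(5.4) and~(5.5)), and then \emph{separately} identifies the limit with $\mathbf{B}^s$ via the known almost-sure convergence of $S(B^m)$ to $\mathbf{B}^s$ in inhomogeneous $q$-variation; for the first-level convergence the paper invokes the forward reference Proposition~\ref{prop:weakly geometric is geometric}. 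Your approach is more direct: by comparing straight to $\mathbf{B}^s$ you exploit the exact cancellation at level one (for $j\le m$) and of the symmetric part at level two, reducing the delicate range to the single L\'evy-area approximation error $\delta^m_{j,i}$, whose $L^2$-decay $\lesssim 2^{-(j+m)}$ you derive from scaling and the classical telescoping argument. The resulting decay rate $2^{mp(\alpha-1/2)}$ is the same as the paper's, but you avoid both the forward reference and the external limit-identification step, at the cost of assuming a priori that the L\'evy area $A_I$ (and hence $\mathbf{B}^s$) is defined pointwise, which is of course standard. Either approach works; yours is slightly more self-contained, while the paper's is more modular in that it leverages the Ledoux--Lyons--Qian estimates verbatim.
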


\begin{proof}
  (1) For each $m\in \mathbb{N}$ we write $S(B^m) = (1,B^m, \mathbb{B}^m)$. Let us fix an $m\in \mathbb{N}$. First, by \cite[Eq.~(5.4)]{Ledoux2002} we have for all $n \ge m$,
  \begin{equation*}
    \sum_{k=1}^{2^n}\mathbb{E}\Big[\Big|\mathbb{B}^m_{\frac{k-1}{2^n},\frac{k}{2^n}}\Big|^{\frac{p}{2}}\Big] \le C 2^{-n(p-1)}2^{mp/2},
  \end{equation*}
  where the constant $C$ only depends on $p$ and is independent of $n$ and $m$. Using this bound, we deduce that
  \begin{equation*}
    \sum_{n=m}^\infty 2^{n(\alpha p -1)}\sum_{k=1}^{2^n}\mathbb{E}\Big[\Big|\mathbb{B}^m_{\frac{k-1}{2^n},\frac{k}{2^n}}\Big|^{\frac{p}{2}}\Big] \le C 2^{m(\alpha - \frac{1}{2})p},
  \end{equation*}
  and consequently by Jensen's inequality that
  \begin{align*}
    \mathbb{E}\Big[\Big(\sum_{n=m}^\infty 2^{n(\alpha p -1)}\sum_{k=1}^{2^n}\Big|\mathbb{B}^m_{\frac{k-1}{2^n},\frac{k}{2^n}}\Big|^{\frac{p}{2}}\Big)^{\frac{2}{p}}\Big] &\le  \Big(\sum_{n=m}^\infty 2^{n(\alpha p -1)}\sum_{k=1}^{2^n}\mathbb{E}\Big[\Big|\mathbb{B}^m_{\frac{k-1}{2^n},\frac{k}{2^n}}\Big|^{\frac{p}{2}}\Big]\Big)^{\frac{2}{p}} \\
    &\le C 2^{m(2\alpha -1)}.
  \end{align*}
  In particular, we obtain that 
  \begin{align*}
    &\sum_{m=1}^\infty \mathbb{E}\Big[\Big(\sum_{n=m+1}^\infty 2^{n(\alpha p -1)}\sum_{k=1}^{2^n}\Big|\mathbb{B}^{m+1}_{\frac{k-1}{2^n},\frac{k}{2^n}} - \mathbb{B}^{m}_{\frac{k-1}{2^n},\frac{k}{2^n}}\Big|^{\frac{p}{2}}\Big)^{\frac{2}{p}}\Big]  \\ 
    &\,\,\lesssim \sum_{m=1}^\infty \mathbb{E}\Big[\Big(\sum_{n=m+1}^\infty 2^{n(\alpha p -1)}\sum_{k=1}^{2^n}\Big|\mathbb{B}^{m+1}_{\frac{k-1}{2^n},\frac{k}{2^n}}\Big|^{\frac{p}{2}}\Big)^{\frac{2}{p}}\Big] + \sum_{m=1}^\infty \mathbb{E}\Big[\Big(\sum_{n=m+1}^\infty 2^{n(\alpha p -1)}\sum_{k=1}^{2^n}\Big|\mathbb{B}^{m}_{\frac{k-1}{2^n},\frac{k}{2^n}}\Big|^{\frac{p}{2}}\Big)^{\frac{2}{p}}\Big]\\
    & \,\,\lesssim \sum_{m=1}^\infty 2^{m(2\alpha -1)}.
  \end{align*}
  Since $\alpha \in (1/3,1/2)$, we have $2\alpha - 1 < 0$, which implies that
  \begin{equation}\label{eq: the bound for n > m}
    \sum_{m=1}^\infty \mathbb{E}\Big[\Big(\sum_{n=m+1}^\infty 2^{n(\alpha p -1)}\sum_{k=1}^{2^n}\Big|\mathbb{B}^{m+1}_{\frac{k-1}{2^n},\frac{k}{2^n}} - \mathbb{B}^{m}_{\frac{k-1}{2^n},\frac{k}{2^n}}\Big|^{\frac{p}{2}}\Big)^{\frac{2}{p}}\Big] 
    < + \infty.
  \end{equation}
  Let us fix again an $m\in \mathbb{N}$ and consider the difference $\mathbb{B}^{m+1}_{\frac{k-1}{2^n},\frac{k}{2^n}} - \mathbb{B}^{m}_{\frac{k-1}{2^n},\frac{k}{2^n}}$ for $n \le m$. Since we are in the finite dimensional setting, the Euclidean norm on $\R^d \otimes \R^d$ is exact for any Gaussian measure with the parameter equals to $\frac{1}{2}$ (see \cite[Theorem~4]{Ledoux2002}). This result together with the explicit formula of $\mathbb{B}^{m+1}_{\frac{k-1}{2^n},\frac{k}{2^n}} - \mathbb{B}^{m}_{\frac{k-1}{2^n},\frac{k}{2^n}}$ (cf. \cite[Eq.~(5.5)]{Ledoux2002}) gives that
  \begin{equation*}
    \sum_{k=1}^{2^n} \mathbb{E}\Big[\Big|\mathbb{B}^{m+1}_{\frac{k-1}{2^n},\frac{k}{2^n}} - \mathbb{B}^{m}_{\frac{k-1}{2^n},\frac{k}{2^n}}\Big|^{\frac{p}{2}}\Big] \le C 2^{-\frac{mp}{4}}2^{-n(\frac{p}{4}-1)},
  \end{equation*}
  where the constant $C$ only depends on $p$. It follows that
  \begin{align*}
    \sum_{n=0}^m 2^{n(\alpha p -1)}\sum_{k=1}^{2^n} \mathbb{E}\Big[\Big|\mathbb{B}^{m+1}_{\frac{k-1}{2^n},\frac{k}{2^n}} - \mathbb{B}^{m}_{\frac{k-1}{2^n},\frac{k}{2^n}}\Big|^{\frac{p}{2}}\Big] &\le C \sum_{n=0}^m2^{np(\alpha - \frac{1}{4})}2^{-\frac{mp}{4}}\\
    &\le C 2^{mp(\alpha - \frac{1}{2})}.
  \end{align*}
  Then, using Jensen's inequality again, we obtain that
  \begin{align}\label{eq: the bound for n<m}
    \sum_{m=1}^\infty \mathbb{E}\Big[\Big(\sum_{n=0}^m 2^{n(\alpha p -1)}\sum_{k=1}^{2^n}\Big|\mathbb{B}^{m+1}_{\frac{k-1}{2^n},\frac{k}{2^n}} - \mathbb{B}^{m}_{\frac{k-1}{2^n},\frac{k}{2^n}}\Big|^{\frac{p}{2}}\Big)^{\frac{2}{p}}\Big] \lesssim \sum_{m=1}^\infty 2^{m(2\alpha -1)} < +\infty.
  \end{align}
  Combining \eqref{eq: the bound for n > m} and \eqref{eq: the bound for n<m}, we actually obtain almost surely that
  \begin{equation*}
    \sum_{m=1}^\infty \Big(\sum_{n=0}^\infty 2^{n(\alpha p -1)}\sum_{k=1}^{2^n}\Big|\mathbb{B}^{m+1}_{\frac{k-1}{2^n},\frac{k}{2^n}} - \mathbb{B}^{m}_{\frac{k-1}{2^n},\frac{k}{2^n}}\Big|^{\frac{p}{2}}\Big)^{\frac{2}{p}}<+\infty,
  \end{equation*}
  which implies that the sequence $(\mathbb{B}^m)$ is a Cauchy sequence with respect to the inhomogeneous metric $\hat{\rho}^{(2)}_{W^\alpha_p}$. Furthermore, using the Proposition~\ref{prop:weakly geometric is geometric} below we also have that the sequence $(B^m)$ is a Cauchy sequence for the metric $\hat{\rho}^{(1)}_{W^\alpha_p}$. Hence, the sequence $(S(B^m))$ is a Cauchy sequence for $\hat{\rho}_{W^\alpha_p}$.
  
  On the other hand, by \cite[Theorem~3]{Ledoux2002} we know that the sequence $S(B^m)$ converges to the Stratonovich lift $\mathbf{B}^s$ with respect to the inhomogeneous $q$-variation metric for any $q \in (2,3)$ almost surely. This observation indeed implies that the limit of $S(B^m)$ with respect to the Sobolev metric $\hat{\rho}_{W^\alpha_p}$ coincides with $\mathbf{B}^s$ almost surely, where we implicitly use the fact that $W^\alpha_p([0,1];G^2(\R^d))$ is complete with respect to $\hat{\rho}_{W^\alpha_p}$, which can be easily shown.
  
  (2) We consider only those $\omega\in \Omega$ such that $S(B^m(\omega))$ converges to $\mathbf{B}^s(\omega)$, which happens for a set with probability one. Let $\X^{*,m}(\omega)$ denote the unique minimizer of $F^m$ over $\mathcal{A}(B(\omega))$, which exists due to Lemma~\ref{lemma: optimizer for each dyadic interpolcation}. Then by the triangle inequality and the minimization property that $\hat{\rho}_{W^\alpha_p}(\X^{*,m}(\omega), S(B^m(\omega))) = F^m(\X^{*,m}(\omega)) \le F^m(\mathbf{B}^s(\omega)) = \hat{\rho}_{W^\alpha_p}(\mathbf{B}^s(\omega), S(B^m(\omega)))$, we get that
  \begin{align*}
    \hat{\rho}_{W^\alpha_p}(\mathbf{B}^s(\omega), \X^{*,m}(\omega)) &\le \hat{\rho}_{W^\alpha_p}(\mathbf{B}^s(\omega), S(B^m(\omega))) + \hat{\rho}_{W^\alpha_p}(\X^{*,m}(\omega), S(B^m(\omega))) \\
    & \le 2 \hat{\rho}_{W^\alpha_p}(\mathbf{B}^s(\omega), S(B^m(\omega))),
  \end{align*}
  which converges to $0$ as $m \rightarrow \infty$. 
\end{proof}

Here we also record an interesting result that every Sobolev rough path~$\X$ can be approximated by its dyadic geodesic interpolations with respect to the discrete Sobolev distance.

\begin{proposition}\label{prop:weakly geometric is geometric}
  Let $\X \in W^\alpha_p([0,1];G^N(\R^d))$ be a path for $\alpha \in (0,1)$ and $p\in [1,+\infty)$ such that $\alpha>1/p$. For each $n \ge 1$, let $\X^n$ be the finite variation rough path with values in $G^N(\R^d)$ such that $\X^n_t = \X_t$ for all $t = k/2^n$, $k = 0, \ldots, 2^n-1$, and $\X^n\mid_{[k/2^n,(k+1)/2^n]}$ is the geodesic joining $\X_{k/2^n}$ and $\X_{(k+1)/2^n}$. Then, one has 
  \begin{equation*}
    \lim_{n \rightarrow \infty}d_{W^\alpha_p,(1)}(\X^n,\X) = 0.
  \end{equation*}
\end{proposition}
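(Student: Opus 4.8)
The plan is to use that the geodesic interpolation $\X^n$ agrees with $\X$ on a large subgrid, so that the two rough paths differ only at fine scales. Concretely, by construction $\X^n_t=\X_t$ whenever $t$ is a dyadic point of level $m\le n$ (such a point is also of level $n$), hence for every $j\le n$ the increments $\X^n_{\frac{m}{2^j},\frac{m+1}{2^j}}$ and $\X_{\frac{m}{2^j},\frac{m+1}{2^j}}$ coincide. Recalling that $d_{W^\alpha_p,(1)}$ is the homogeneous discrete Sobolev metric,
\[
  d_{W^\alpha_p,(1)}(\X^n,\X)^p=\sum_{j\ge 0}2^{j(\alpha p-1)}\sum_{m=0}^{2^j-1}d_{cc}\big(\X^n_{\frac{m}{2^j},\frac{m+1}{2^j}},\X_{\frac{m}{2^j},\frac{m+1}{2^j}}\big)^p ,
\]
only the terms with $j>n$ contribute, and it remains to show this tail vanishes. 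For $j>n$ I would bound each increment by the triangle inequality for $d_{cc}$ together with left-invariance, $d_{cc}(\X^n_{s,t},\X_{s,t})\le d_{cc}(\X^n_s,\X^n_t)+d_{cc}(\X_s,\X_t)$, and by $(a+b)^p\le 2^{p-1}(a^p+b^p)$, splitting the tail into an \emph{original part}, involving the increments $d_{cc}(\X_{\frac{m}{2^j}},\X_{\frac{m+1}{2^j}})^p$, and a \emph{geodesic part}, involving $d_{cc}(\X^n_{\frac{m}{2^j}},\X^n_{\frac{m+1}{2^j}})^p$.

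The original part, $\sum_{j>n}2^{j(\alpha p-1)}\sum_{m}d_{cc}(\X_{\frac{m}{2^j}},\X_{\frac{m+1}{2^j}})^p$, is the tail of the convergent series $\|\X\|_{W^\alpha_p,(1)}^p$ — finite since $\X\in W^\alpha_p$, using the equivalence of the discrete and continuous Sobolev norms from \cite[Theorem~2.2]{Liu2020} — and therefore tends to $0$ as $n\to\infty$. For the geodesic part, the crucial input is that on each level-$n$ interval $[\frac{k}{2^n},\frac{k+1}{2^n}]$ the path $\X^n$ is a constant-speed geodesic joining $\X_{\frac{k}{2^n}}$ and $\X_{\frac{k+1}{2^n}}$, so every level-$j$ subinterval ($j>n$) of it satisfies $d_{cc}(\X^n_{\frac{m}{2^j}},\X^n_{\frac{m+1}{2^j}})=2^{n-j}d_{cc}(\X_{\frac{k}{2^n}},\X_{\frac{k+1}{2^n}})$. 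Summing the $p$-th powers over the $2^{j-n}$ level-$j$ subintervals inside each of the $2^n$ level-$n$ intervals gives $\sum_{m}d_{cc}(\X^n_{\frac{m}{2^j}},\X^n_{\frac{m+1}{2^j}})^p=2^{(j-n)(1-p)}A_n$ with $A_n:=\sum_{k=0}^{2^n-1}d_{cc}(\X_{\frac{k}{2^n}},\X_{\frac{k+1}{2^n}})^p$. Multiplying by $2^{j(\alpha p-1)}$ and summing the resulting geometric series over $j>n$ (convergent because $\alpha<1$ makes the exponent $p(\alpha-1)$ negative) leaves, up to a constant depending only on $\alpha,p$, precisely $2^{n(\alpha p-1)}A_n$, which is the $j=n$ summand of $\|\X\|_{W^\alpha_p,(1)}^p<\infty$ and hence tends to $0$. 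Combining the two parts gives $d_{W^\alpha_p,(1)}(\X^n,\X)\to 0$.

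The main work here is bookkeeping rather than a conceptual difficulty: one must carefully count the nested dyadic subintervals and track the exponents through the geometric summation so as to recognise the outcome of the geodesic part as a single \emph{tail} term of $\|\X\|_{W^\alpha_p,(1)}^p$ (which decays to $0$ along the convergent series), not merely as a quantity bounded by a fixed negative power of $2^n$. Besides that, one should note that $\X^n$ is indeed an admissible finite-variation (weakly geometric) rough path, being a continuous concatenation of $G^N(\R^d)$-valued geodesics matching at the endpoints, and that the identity $d_{cc}(\X^n_{\frac{m}{2^j}},\X^n_{\frac{m+1}{2^j}})=2^{n-j}d_{cc}(\X_{\frac{k}{2^n}},\X_{\frac{k+1}{2^n}})$ is exactly the constant-speed reparametrisation property defining a geodesic in the metric space $(G^N(\R^d),d_{cc})$.
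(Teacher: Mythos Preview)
Your proof is correct and follows essentially the same route as the paper's: both observe that the terms with $j\le n$ vanish, split the tail via the same triangle inequality $d_{cc}(\X^n_{s,t},\X_{s,t})\le \|\X^n_{s,t}\|+\|\X_{s,t}\|$, handle the ``original part'' as the tail of the convergent series $\|\X\|_{W^\alpha_p,(1)}^p$, and reduce the ``geodesic part'' via the constant-speed identity $d_{cc}(\X^n_{m/2^j},\X^n_{(m+1)/2^j})=2^{n-j}d_{cc}(\X_{k/2^n},\X_{(k+1)/2^n})$ to a geometric series times $2^{n(\alpha p-1)}A_n$. Your bookkeeping of the exponents is accurate and matches the paper's computation $\big(\sum_{j>n}2^{(n-j)(1-\alpha)p}\big)\,2^{n(\alpha p-1)}A_n$ exactly.
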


\begin{proof}
  Fix an $n \ge 1$. Since $\X^n$ coincides with $\X$ on all dyadic points of the $n$-th generation, we indeed have
  $$
    \sum_{m=0}^{2^j-1}d_{cc}(\X^n_{m/2^j,(m+1)/2^j},\X_{m/2^j,(m+1)/2^j}) = 0
  $$
  for all $j \le n$. Hence, in view of the definition of $d_{W^\alpha_p,(1)}$ it suffices to show that $\lim_{n \rightarrow \infty} R_n = 0$ for 
  $$
    R_n := \sum_{j=n+1}^\infty 2^{j(\alpha p - 1)}\sum_{m=0}^{2^j-1} d_{cc}(\X^n_{m/2^j,(m+1)/2^j}, \X_{m/2^j,(m+1)/2^j})^p.
  $$
  For each $j \ge n+1$, by triangle inequality one has
  \begin{align*}
    d_{cc}(\X^n_{m/2^j,(m+1)/2^j}, \X_{m/2^j,(m+1)/2^j}) 
    \le & d_{cc}(\X^n_{m/2^j}, \X^n_{(m+1)/2^j}) + d_{cc}(\X_{m/2^j}, \X_{(m+1)/2^j}).
  \end{align*}
  Since $\X \in W^\alpha_p([0,1];G^N(\R^d))$, the condition $\|\X\|_{W^\alpha_p,(1)} <+ \infty$ ensures that 
  $$
    \lim_{n \rightarrow \infty} \sum_{j=n+1}^\infty 2^{j(\alpha p - 1)}\sum_{m=0}^{2^j-1} d_{cc}(\X_{m/2^j}, \X_{(m+1)/2^j})^p = 0.
  $$ 
  Thus in order to get $\lim_{n \rightarrow \infty}R_n = 0$, we only need to show that
  \begin{equation}\label{eq: asymptotic behaviour of geodesic interpolation}
    \lim_{n \rightarrow \infty} \sum_{j=n+1}^\infty 2^{j(\alpha p - 1)}\sum_{m=0}^{2^j-1} d_{cc}(\X^n_{m/2^j}, \X^n_{(m+1)/2^j})^p = 0.
  \end{equation}
  For this purpose, we note that since $\X^n$ is the geodesic linking $\X_{m/2^n}$ with $\X_{(m+1)/2^n}$, $m = 0, \ldots, 2^n - 1$, for $j \ge n+1$, $k/2^j \in [m/2^j,(m+1)/2^j]$, it holds that 
  $$
    d_{cc}(\X^n_{k/2^j}, \X^n_{(k+10/2^j)}) = 2^{n-j}d_{cc}(\X_{m/2^n}, \X_{(m+1)/2^n}).
  $$
  As a consequence, we can readily deduce that  
  \begin{align*}
    \sum_{j=n+1}^\infty  2^{j(\alpha p - 1)}
    &\sum_{m=0}^{2^j-1} d_{cc}(\X^n_{m/2^j}, \X^n_{(m+1)/2^j})^p \\
    &= \Big(\sum_{j = n+1}^\infty 2^{(n-j)(1-\alpha)p}\Big) 
    2^{n(\alpha p -1)} \sum_{m=0}^{2^n-1} d_{cc} (\X_{m/2^n},\X_{(m+1)/2^n})^p.
  \end{align*}
  Since $\sum_{j = n+1}^\infty 2^{(n-j)(1-\alpha)p} \sim 2^{(1-\alpha)p}$, letting $n$ tend to infinity leads to~\eqref{eq: asymptotic behaviour of geodesic interpolation}.
\end{proof}

Finally, we have all ingredients at hand to prove Theorem~\ref{thm:Stratonovich is optimal}.

\begin{proof}[Proof of Theorem~\ref{thm:Stratonovich is optimal}]
  By Proposition~\ref{prop: stratonovich lift can be approx. by minimizers} the Stratonovich rough path lift $\mathbf{B}^s$ belongs to $W^{\alpha}_p([0,1];G^2(\R^d))$ and is a solution to the optimization problem~\eqref{eq:Stratonovich is optimal} with $F(\mathbf{B}^s)=0$ almost surely. Let $\omega\in \Omega$ be such that $F(\mathbf{B}^s(\omega))=0$.
  
  It remains to show the uniqueness of $\mathbf{B}^s(\omega)$ as optimizer. To this end, suppose $\X^*$ is another optimizer of~\eqref{eq:Stratonovich is optimal} given $B(\omega)$. In this case we have 
  \begin{align*}
    0 \leq \hat{\rho}_{W^\alpha_p} (\mathbf{B}^s(\omega),\X^*)
    \leq \hat{\rho}_{W^\alpha_p} (\mathbf{B}^s(\omega),S(B^m(\omega))) 
    + \hat{\rho}_{W^\alpha_p} (S(B^m(\omega)),\X^*).
  \end{align*}
  Taking the limes superior, we get $\hat{\rho}_{W^\alpha_p} (\mathbf{B}^s(\omega),\X^*)=0$, which implies the claimed uniqueness. 
\end{proof}

\subsection{Optimal extension of lower dimensional rough paths}

A related extension problem of interest is whether there exists a optimal joint rough path lift given two lower dimensional weakly geometric rough paths. For instance, this task appears in the context of rough differential equations with mean field interaction, see \cite{Cass2015}, or in the context of robust non-linear filtering, see \cite{Diehl2015}.

More precisely, for $k,l\in \N$ and for two Sobolev rough paths
$$
  \X\in W^{\alpha}_p([0,T];G^{[\frac{1}{\alpha}]}(\mathbb{R}^k)) \quad \text{and} \quad  \Y \in W^{\alpha}_p([0,T];G^{[\frac{1}{\alpha}]}(\mathbb{R}^l)),
$$ 
with $d:=k+l$, define the \textit{admissible set}  
\begin{align*}
  \mathcal{A}(\X,\Y)
  := \Big\{ \Z \in W^{\alpha}_p([0,T];G^{[\frac{1}{\alpha}]}(\mathbb{R}^d))\,:\, \pi_{d,k} (\Z) =\X\text{ and }\pi_{d,l} (\Z) =\Y \Big\},
\end{align*}
where $\pi_{d,k}\colon G^{[\frac{1}{\alpha}]}(\R^d)\to G^{[\frac{1}{\alpha}]}(\R^k)$ is the extension of the canonical projection $p_{d,k}\colon \R^d\to \R^k$ and $\pi_{d,l}$ is analogously defined. For more details we refer to \cite[Section~7.5.6]{Friz2010} and, in particular, \cite[Example~7.54]{Friz2010}. 

\begin{problem}\label{prob:joint lift problem}
  Given two lower-dimensional Sobolev rough paths $\X\in W^{\alpha}_p([0,T];G^{[\frac{1}{\alpha}]}(\R^k))$ and $\Y \in W^{\alpha}_p([0,T];G^{[\frac{1}{\alpha}]}(\mathbb{R}^l))$ and a selection criterion $F\colon \mathcal{A}(\X,\Y) \to \R \cup \{+\infty\}$, we are looking for an admissible rough path lift $\Z^*\in \mathcal{A}(\X,\Y)$ such that
  \begin{equation*}
    F(\Z^*)=\min_{\Z\in \mathcal{A}(\X,\Y)} F(\Z).
  \end{equation*}
\end{problem}

Indeed, we will show that if $\alpha \in (1/3,1/2)$ (that is, if $[\frac{1}{\alpha}] = 2$) and if the functional $F$ satisfies all assumptions required in Theorem~\ref{thm:existence and uniqueness of optimal extension}, the optimal extension problem looking for joint rough path lifts of two lower dimensional rough paths described as in the Optimal Extension Problem~\ref{prob:joint lift problem} admits a (unique) solution. 

\begin{proposition}\label{prop:optimization of 2 levels joint lifts}
  Let $\X\in W^{\alpha}_p([0,T];G^{2}(\mathbb{R}^k))$ and $\Y \in W^{\alpha}_p([0,T];G^{2}(\mathbb{R}^l))$ be given for $\alpha \in (\frac{1}{3},\frac{1}{2})$. Then, the admissible set $\mathcal{A}(\X,\Y)$ is non-empty. Moreover, let $F$ be a proper functional defined on $\mathcal{A}(\X,\Y)$ such that $F$ admits an extension $\overline{F}$ defined on the Banach  $W^\alpha_p([0,T];\R^d \oplus (\R^d)^{\otimes2})$ which is coercive, convex, lower semi-continuous and equals to $+\infty$ outside $\mathcal{A}(\X,\Y)$. Then, the Optimal Extension Problem~\ref{prob:joint lift problem} admits at least one solution~$Z^*$. If in addition $\overline{F}$ is strictly convex, then the solution~$Z^*$ is unique.
\end{proposition}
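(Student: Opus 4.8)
The plan is to reduce this statement to the results already obtained in this section by recognising $\mathcal{A}(\X,\Y)$ as an admissible set of the form $\mathcal{A}(Z)$ treated in Theorem~\ref{thm:existence and uniqueness of optimal extension}. Since $\alpha\in(1/3,1/2)$ we have $[\tfrac{1}{\alpha}]=2$, so $\mathcal{G}^2(\R^d)=W_1\oplus W_2$ is $2$-step nilpotent with $W_1=\R^d=\R^k\oplus\R^l$ and $W_2\cong\Lambda^2\R^d$ contained in the centre. Decomposing $\Lambda^2\R^d=\Lambda^2\R^k\oplus(\R^k\otimes\R^l)\oplus\Lambda^2\R^l$, where the middle summand collects the ``mixed'' antisymmetric $2$-tensors, I would set $\mathfrak{k}$ to be that middle summand and $K:=\exp(\mathfrak{k})$; being a subspace of the centre $W_2$, $\mathfrak{k}$ is an ideal, so $K$ is a closed normal subgroup of $G^2(\R^d)$ with $K\subset\exp(W_{[\frac{1}{\alpha}]})$. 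A short computation with the block structure of $G^2(\R^d)$ shows that the canonical projections $\pi_{d,k}$ and $\pi_{d,l}$ both annihilate $K$ and that the induced map $G^2(\R^d)/K\to G^2(\R^k)\times G^2(\R^l)$, $gK\mapsto(\pi_{d,k}(g),\pi_{d,l}(g))$, is a group isomorphism (injectivity uses that the symmetric part of the second level is pinned down by the first level; surjectivity is checked by explicitly gluing $\X_t$ and $\Y_t$ into an element of $G^2(\R^d)$). Under this isomorphism the pair $(\X,\Y)$ corresponds to a path $Z$ in $G^2(\R^d)/K$ with $d(Z_s,Z_t)\lesssim d_{cc}(\X_s,\X_t)+d_{cc}(\Y_s,\Y_t)$ uniformly on bounded sets, by equivalence of homogeneous norms, hence $Z\in W^\alpha_p([0,T];G^2(\R^d)/K)$, and $\mathcal{A}(\X,\Y)=\mathcal{A}(Z)$.

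With this identification in hand the proposition follows from the three pillars of the section. Non-emptiness of $\mathcal{A}(\X,\Y)=\mathcal{A}(Z)$ is precisely the Lyons--Victoir extension theorem for Sobolev paths (Theorem~\ref{thm:Generalization of Theorem 14 in Lyons-Victoir}), which applies because $\tfrac{1}{\alpha}\in(2,3)$ is not an integer. Convexity of $\mathcal{A}(Z)$ when embedded into the reflexive Banach space $W^\alpha_p([0,T];\R^d\oplus(\R^d)^{\otimes2})$, together with the affine identity $\Z^\lambda_{s,t}=\Z^1_{s,t}+\lambda(\Z^2_{s,t}-\Z^1_{s,t})$ for the convex combination $\Z^\lambda=\mathbf{C}_\lambda(\Z^1,\Z^2)$, is exactly Lemma~\ref{lem:admissible set is convex}, applicable since $K\subset\exp(W_{[\frac{1}{\alpha}]})$. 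Finally, for a proper $F$ whose extension $\overline{F}$ is coercive, convex, lower semi-continuous and equal to $+\infty$ off $\mathcal{A}(\X,\Y)$, the optimal extension problem~\ref{prob:joint lift problem} for $(\X,\Y)$ is literally the instance of Theorem~\ref{thm:existence and uniqueness of optimal extension} associated with $Z$ and $K$; it therefore admits a solution $\Z^*$, which is unique whenever $\overline{F}$ is in addition strictly convex.

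I expect the only real work to be the first step, and in particular the correct choice of $K$: one must arrange that the fibre of $\pi_{G^2(\R^d),G^2(\R^d)/K}$ over $Z$ coincides with the fibre of $(\pi_{d,k},\pi_{d,l})$ over $(\X,\Y)$. The genuinely free datum of a joint lift is the mixed block of the second level, subject only to a symmetric constraint that determines it, up to its antisymmetric part, in terms of the first levels of $\X$ and $\Y$; right multiplication by $\exp(\xi)$ with $\xi\in\mathfrak{k}$ shifts exactly this antisymmetric part (and the transposed counter-term in the $\R^l\otimes\R^k$ block), so the two fibre structures match precisely for $\mathfrak{k}$ the mixed antisymmetric tensors and for no larger or smaller subspace. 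Once this bookkeeping is in place the reduction goes through verbatim, and, as in the earlier examples, no further analytic input is required beyond what is already available in Theorem~\ref{thm:existence and uniqueness of optimal extension} and Lemma~\ref{lem:admissible set is convex}.
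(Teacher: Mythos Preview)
Your proposal is correct and takes essentially the same route as the paper: your subgroup $K=\exp(\mathfrak{k})$ with $\mathfrak{k}=[\R^k,\R^l]$ coincides with the paper's $L$, and both arguments build a path in $G^2(\R^d)/K$ from $(\X,\Y)$, bound its quotient norm by $\|\X_{s,t}\|+\|\Y_{s,t}\|$, and apply the Lyons--Victoir extension (Theorem~\ref{thm:Generalization of Theorem 14 in Lyons-Victoir}) for non-emptiness. The one minor difference is in the optimization step: you identify $\mathcal{A}(\X,\Y)=\mathcal{A}(Z)$ exactly and invoke Theorem~\ref{thm:existence and uniqueness of optimal extension} for this $Z$ and $K$, whereas the paper embeds $\mathcal{A}(\X,\Y)$ as a convex subset of the larger set $\mathcal{A}(X)$ with $X:=\pi_1(\X)\oplus\pi_1(\Y)\in\R^d\cong G^2(\R^d)/\exp(W_2)$ and applies Theorem~\ref{thm:existence and uniqueness of optimal extension} there; your framing is slightly tighter but the analytic content is identical.
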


\begin{proof}
  First let us prove that $\mathcal{A}(\X,\Y)$ is always non-empty. Let $\Z_t := \exp\big(\log \X_t + \log \Y_t\big)$ for $t \in [0,T]$. Here one needs to interpret $\log \X_t$ as a vector in the bigger Lie algebra $\mathcal{G}^2(\R^d)$ by identifying $\mathcal{G}^2(\R^k)$ as a subspace of $\mathcal{G}^2(\R^d)$ in the natural way, and the same interpretation also holds for $\log \Y_t$. Then the path $\Z$ takes values  in the quotient group $G^2(\R^d)/L$, where $L = \exp \mathcal{L}$ and $\mathcal{L}$ is the Lie ideal generated by 
  $$
    [\R^k, \R^l] = \text{span}\{e_i \otimes e_{k+j} - e_{k+j} \otimes e_i: i=1, \dots , k, j=1, \dots , l\}
  $$
  where $e_{m}$, $m=1,\dots,d$, is the canonical basis of $\R^d = \R^k \oplus \R^l$. Using the Campbell--Baker--Hausdorff formula one can verify that for $s,t \in [0,T]$, 
  $$
    \Z_{s,t} = \Z_s^{-1} \otimes \Z_t = \X_{s,t} \otimes \Y_{s,t} \otimes \exp{M_{s,t}},
  $$
  where $M_{s,t}$ belongs to $\mathcal{L}$, and we interpret $\X_{s,t}$ and $\Y_{s,t}$ as elements in $G^2(\R^d)$ by identifying $G^2(\R^k)$ and $G^2(\R^l)$ as Lie subgroups of $G^2(\R^d)$ in the natural way. Hence, we have
  $$
    \|\Z_{s,t}\|_{G^2(\R^d)/L} = \inf_{l \in L}\|\Z_{s,t}\otimes l\| \le \|\Z_{s,t} \otimes \exp(-M_{s,t})\| = \|\X_{s,t} \otimes \Y_{s,t}\|.
  $$
  Since $\X$ and $\Y$ have finite Sobolev norms, the sub-additivity of Carnot--Caratheodory norm $\| \cdot \|$ provides that $\Z \in W^\alpha_p([0,T];G^2(\R^d)/L)$. Then, by Lyons--Victoir extension theorem for Sobolev paths (see Theorem~\ref{thm:Generalization of Theorem 14 in Lyons-Victoir})  we can lift $\Z$ to a $\tilde{\Z} \in W^\alpha_p([0,T];G^2(\R^d))$ such that $\pi_{G^2(\R^d),G^2(\R^d)/L}(\tilde{\Z}) = \Z$. Thanks to the definition of $\Z$ we have $\pi_{d,k}(\tilde{\Z}) = \X$ and $\pi_{d,l}(\tilde{\Z}) = \Y$, and thus we obtain that $\tilde{\Z} \in \mathcal{A}(\X,\Y)$.
  
  Now the existence and uniqueness of a solution~$\Z^*$ to the Optimal Extension Problem~\ref{prob:joint lift problem} follows immediately from Theorem~\ref{thm:existence and uniqueness of optimal extension} because in this case one can check that $\mathcal{A}(\X,\Y)$ is a convex subset of $\mathcal{A}(X)\subset W^\alpha_p([0,T];\R^d \oplus (\R^d)^{\otimes2})$, where $X := \pi_1(\X) \oplus \pi_1(\Y)$ is an element of the quotient group $G^2(\R^d)/\exp (W_2) \cong \R^d$ and the convexity is the usual one defined on the Banach space $W^\alpha_p([0,T];\R^d \oplus (\R^d)^{\otimes2})$.
\end{proof}

\begin{remark}
  Analogously to the proof of Proposition~\ref{prop:optimization of 2 levels joint lifts}, one can actually show that $\mathcal{A}(\X,\Y)$ is non-empty for all $\alpha \in (0,1)$ and $p\in (1,+\infty)$ with $\alpha > \frac{1}{p}$, using Lyons--Victoir extension theorem for Sobolev paths (see Theorem~\ref{thm:Generalization of Theorem 14 in Lyons-Victoir}).
\end{remark}

\section{Discussion: generalizations to Besov spaces}\label{sec:Besov spaces}

In principle, all results presented in the previous sections extend from Sobolev spaces to the even more general class of Besov spaces. Since this would make the paper much longer and technically more involved without (most likely) leading to additionally conceptional new insights, we decided to write the paper in the Sobolev setting. However, some results extend immediately to a Besov topology without extra conceptional effort. Thus an extension might be interesting in view of the Besov rough analysis recently provided in~\cite{Friz2021}. In this section we shall point out which ones.

\smallskip

We start by recalling the definition of (non-linear) Besov spaces. Let $(E,d)$ be a metric space. For $\alpha\in (0,1)$ and $p,q\in [1,+\infty]$ the \textit{Besov space} $B_{p,q}^\alpha([0,1];E)$ consists of all measurable functions $f\colon [0,1] \to E $ such that 
\begin{equation*}
  {\|f\|}_{B^\alpha_{p,q}} :=\bigg(\int_{0}^1 \Big(\int_0^{1-h} \frac{d(f(x),f(x+h))^p}{h^{\alpha p}}\dd x\Big)^{\frac{q}{p}}\frac{\dd h}{h} \bigg)^{1/q}<+\infty.
\end{equation*}
Note that $B_{p,p}^\alpha([0,1];E)$ coincides with the Sobolev space $W_{p}^\alpha([0,1];E)$. This leads naturally to the notion of Besov rough paths. 

\begin{definition}
  Let $0<\alpha<1$ and $1\le p,q\le + \infty$ such that $\alpha  > 1/p$. Every $\X\in B^\alpha_{p,q}([0,1];G^{[\frac{1}{\alpha}]}(\R^d))$ is called \textit{Besov rough path} with regularity $(\alpha,p,q)$. 
\end{definition}

The main ingredient for some results (pointed out below) was the equivalence of ${\|f\|}_{B^\alpha_{p,q}}$ and
\begin{equation*}
  {\|f\|}_{b^\alpha_{p,q},(1)} :=\bigg(  \sum_{j \geq 0} 2^{jq(\alpha - \frac{1}{p})} \Big(\sum_{m=0}^{2^j-1} { d\big(f\big(\frac{m+1}{2^j}),f\big(\frac{m}{2^j}\big) \big)  }^p\Big)^{\frac{q}{p}}\bigg)^{1/q}
\end{equation*}
for continuous functions $f\colon [0,1] \to E$, see \cite[Theorem~2.2]{Liu2020}, which we so far only use in the Sobolev case. Based on this equivalence, one can easily modify the poof of Lemma~\ref{lem:Generalization of Lemma 13 in Lyons-Victoir} to obtain its Besov analogue and follow the lines of \cite{Lyons2007a} to arrive at the \textit{Lyons-Victoir theorem for Besov paths}, cf. Corollary~\ref{cor:Generalization of Corollary 19 in Lyons-Victoir}. Let $0<\alpha<1$ and $1\le p,q\le + \infty$ be such that $\alpha > 1/p$ such that $\frac{1}{\alpha} \notin \N\setminus \{1\}$ for the rest of the section. 

\begin{proposition}
  Then, every $\R^d$-valued path $X\in B^\alpha_{p,q}([0,1];\R^d)$ can be lifted to a Besov rough path in $B^\alpha_{p,q}([0,1];G^{[\frac{1}{\alpha}]}(\R^d))$.
\end{proposition}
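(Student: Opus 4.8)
The plan is to mimic exactly the strategy used for the Sobolev (i.e. $q=p$) case in Section~\ref{subsec:Lyons--Victoir lift}, observing that the only place where the exponent $q$ enters is through the discrete characterization of Besov spaces and the summability arguments it feeds. First I would state and prove the Besov analogue of Lemma~\ref{lem:Generalization of Lemma 13 in Lyons-Victoir}: given a normed Carnot group $(G,\|\cdot\|_G)$ with graded Lie algebra $\mathcal{G}=W_1\oplus\cdots\oplus W_n$, a closed subgroup $K\subset\exp(W_n)$, exponents $0<\alpha<1$, $1\le p,q\le+\infty$ with $\alpha>1/p$ and $\alpha<1/n$, every continuous path $X\in B^\alpha_{p,q}([0,T];G/K)$ admits a lift $\tilde X\in B^\alpha_{p,q}([0,T];G)$ with $\pi_{G,G/K}(\tilde X)=X$. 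The construction of $\tilde X$ on the dyadic grid via the recursive definition of the correction elements $Y_{\frac{k}{2^m},\frac{k+1}{2^m}}\in K$ and the dilation $\delta_{2^{-1/n}}$ is \emph{verbatim} that of Lemma~\ref{lem:Generalization of Lemma 13 in Lyons-Victoir}; it does not see $q$ at all, and relies only on the axiom of choice through \cite[Proposition~6]{Lyons2007a}.

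The regularity estimate is where $q$ shows up, and here I would replace the scalar recursion for $a_m$ by its $\ell^q$-weighted version. Concretely, set
\[
  a_m := 2^{m(\alpha-\frac1p)}\Big(\sum_{k=0}^{2^m-1}\|Y_{\frac{k}{2^m},\frac{k+1}{2^m}}\|_G^p\Big)^{1/p},
  \qquad
  b_m := C2^{m(\alpha-\frac1p)}\Big(\sum_{k=0}^{2^m-1}\|X_{\frac{k}{2^m},\frac{k+1}{2^m}}\|_{G/K}^p\Big)^{1/p}+\cdots,
\]
exactly as in the Sobolev proof, so that one again obtains $a_{m+1}\le 2^{r}a_m+b_m$ with $r=\alpha-\frac1n-\frac1p<0$. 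The discrete characterization \cite[Theorem~2.2]{Liu2020}, which is stated there for general $q$ (see the discussion in Section~\ref{sec:Besov spaces}), gives $\sum_m b_m^q\lesssim \|X\|_{b^\alpha_{p,q},(1)}^q<+\infty$. Iterating the recursion yields $a_{m+1}\le 2^{r(m+1)}a_0+\sum_{k=0}^m 2^{r(m-k)}b_k$, and since the geometric weights $(2^{r\ell})_{\ell\ge0}$ are summable, a discrete Young / Minkowski inequality for the $\ell^q$-norm (replacing the $\ell^p$ Minkowski and Jensen steps of the Sobolev proof) gives $\big(\sum_m a_m^q\big)^{1/q}\lesssim \big(\sum_m b_m^q\big)^{1/q}<+\infty$. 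Combining with $\|\tilde X_{\cdot}\|_G\lesssim \|X_{\cdot}\|_{G/K}+\|Y_{\cdot}\|_G$ via the triangle inequality, one concludes $\|\tilde X\|_{b^\alpha_{p,q},(1)}<+\infty$, and then \cite[Theorem~2.2]{Liu2020} once more upgrades this to $\tilde X\in B^\alpha_{p,q}([0,T];G)$.

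With the Besov analogue of Lemma~\ref{lem:Generalization of Lemma 13 in Lyons-Victoir} in hand, the Besov analogues of Theorem~\ref{thm:Generalization of Theorem 14 in Lyons-Victoir} and Corollary~\ref{cor:Generalization of Corollary 19 in Lyons-Victoir} follow precisely as in \cite{Lyons2007a}, by induction on the depth of the step-$[\frac1\alpha]$ free nilpotent group: at each layer one peels off the highest grading, applies the extension lemma with $G=G^{[\frac1\alpha]}(\R^d)/K_i$ for a suitable descending chain of normal subgroups $K_i$, and uses that $\frac1\alpha\notin\N\setminus\{1\}$ precisely to guarantee the strict inequality $\alpha<1/n$ needed at the final step. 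Specializing $G/K=\R^d$ (i.e. $K=\exp(W_2\oplus\cdots\oplus W_{[1/\alpha]})$) and starting from $X\in B^\alpha_{p,q}([0,1];\R^d)$ produces the desired Besov rough path lift $\X\in B^\alpha_{p,q}([0,1];G^{[\frac1\alpha]}(\R^d))$ with $\pi_1(\X)=X$, proving the proposition. The only mildly delicate point — and the one I would write out carefully — is the $\ell^q$-summation of the iterated recursion, since for $q<1$ one must use subadditivity of $t\mapsto t^q$ rather than Minkowski, and for $q=+\infty$ the geometric series is replaced by a supremum; but since the paper restricts attention to $1\le q\le+\infty$ and the weights are a fixed geometric sequence independent of the data, this is entirely routine and causes no loss of regularity.
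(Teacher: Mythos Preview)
Your proposal is correct and follows precisely the approach the paper itself indicates: modify the proof of Lemma~\ref{lem:Generalization of Lemma 13 in Lyons-Victoir} by replacing the $\ell^p$-summation in $m$ by an $\ell^q$-summation, using the general-$q$ discrete characterization \cite[Theorem~2.2]{Liu2020}, and then iterate as in \cite{Lyons2007a} to obtain the Besov analogues of Theorem~\ref{thm:Generalization of Theorem 14 in Lyons-Victoir} and Corollary~\ref{cor:Generalization of Corollary 19 in Lyons-Victoir}. The paper does not spell out the details, but the only new ingredient beyond the Sobolev proof is exactly the $\ell^q$-convolution estimate with the geometric kernel $(2^{r\ell})_{\ell\ge0}$ that you identify, which is indeed routine for $1\le q\le+\infty$.
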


The Optimal Extension Problem~\ref{prob:optimal extension} can also be considered for Besov rough paths. For $K$  a closed subgroup of $\exp(W_{[\frac{1}{\alpha}]})$ and $X\in B^{\alpha}_{p,q}([0,1];G^{[\frac{1}{\alpha}]}(\R^d)/K)$ we only need to replace the admissible set $\mathcal{A}(X)$ by
\begin{equation*}
  \mathcal{A}_B(X):= \Big\{ \X \in B^{\alpha}_{p,q}([0,1];G^{[\frac{1}{\alpha}]}(\mathbb{R}^d))\,:\, \pi_{G^{[\frac{1}{\alpha}]}(\R^d),G^{[\frac{1}{\alpha}]}(\R^d)/K} (\X) =X\Big\}.
\end{equation*}

Following the proof of Lemma~\ref{lem:admissible set is convex} and Theorem~\ref{thm:existence and uniqueness of optimal extension}, we deduce without difficulties:

\begin{proposition}
  Let $F \colon \mathcal{A}_B(X) \rightarrow \R \cup \{+\infty\}$ be a proper functional defined on $\mathcal{A}_B(X)$, that is, $F(\X) \in \R$ holds for at least one $\X \in \mathcal{A}_B(X)$. If $F$ can be extended to a coercive, strictly convex and lower semi-continuous functional $\overline{F}$ defined on the Banach space $B^\alpha_{p,q}([0,1]; \bigoplus_{i=1}^{[\frac{1}{\alpha}]} (\R^d)^{\otimes i})$ such that $\overline{F} = +\infty$ outside $\mathcal{A}(X)$, then the Optimal Extension Problem~\ref{prob:optimal extension} admits a unique solution~$\X^*$ in $\mathcal{A}_B(X)$. 
\end{proposition}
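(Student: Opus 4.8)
The plan is to mirror exactly the Sobolev argument that proved Theorem~\ref{thm:existence and uniqueness of optimal extension} and Proposition~\ref{prop:optimization of inhomogeneous Sobolev norm}, checking that every ingredient carries over verbatim when the exponent on the ``outer'' $\ell^q_\lambda$-integration is made into a free parameter $q$. First I would record that, since $K\subset\exp(W_{[\frac1\alpha]})$, the convexity bookkeeping of Lemma~\ref{lem:admissible set is convex} is insensitive to the choice of $p$ versus $q$: the Campbell--Baker--Hausdorff computation shows $\log\Z^\lambda_{s,t}=(1-\lambda)\log\X_{s,t}+\lambda\log\Y_{s,t}$ and hence $\Z^\lambda_{s,t}=\X_{s,t}+\lambda(\Y_{s,t}-\X_{s,t})$ for the embedding into $\bigoplus_{i=0}^{[\frac1\alpha]}(\R^d)^{\otimes i}$. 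The only new point is to check that the Sobolev-norm bound $d_{cc}(\Z_s,\Z_t)\lesssim(1-\lambda)^\alpha d_{cc}(\X_s,\X_t)+\lambda^\alpha d_{cc}(\Y_s,\Y_t)$ used there still forces $\Z^\lambda\in B^\alpha_{p,q}$; but this is a pointwise inequality on increments, so it passes through \emph{any} mixed $(\ell^p_x,\ell^q_\lambda)$-type functional, in particular through $\|\cdot\|_{B^\alpha_{p,q}}$ and its discrete version $\|\cdot\|_{b^\alpha_{p,q},(1)}$. So $\mathcal{A}_B(X)$ is a convex subset of the Banach space $B^\alpha_{p,q}([0,1];\bigoplus_{i=1}^{[\frac1\alpha]}(\R^d)^{\otimes i})$, with the convexity induced by ordinary vector addition.

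Next I would import the functional-analytic core. The key structural fact is that $B^\alpha_{p,q}$ is a reflexive Banach space for $p,q\in(1,+\infty)$ --- it is (isomorphic to) a closed subspace of a weighted $\ell^q$-sum of $L^p$-spaces, and such a space is reflexive. (If one allows $p=\infty$ or $q=\infty$ one loses reflexivity, so I would state the proposition for $p,q\in(1,+\infty)$, consistent with how the Sobolev case is phrased.) Granted reflexivity, I would invoke the same abstract result from convex analysis already cited in the proof of Theorem~\ref{thm:existence and uniqueness of optimal extension}, namely \cite[Theorem~2.5.1]{Zualinescu2002}: a proper, coercive, lower semi-continuous, strictly convex functional $\overline{F}$ on a reflexive Banach space attains its infimum at a unique point. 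Since $\overline{F}=+\infty$ off $\mathcal{A}_B(X)$ and $F$ is proper there, the minimizer $\X^*$ automatically lies in $\mathcal{A}_B(X)$ and solves the optimal extension problem~\ref{prob:optimal extension}.

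The one genuine check --- and the step I expect to need the most care --- is the embedding estimate $|\X|_{B^\alpha_{p,q}}\lesssim\|\X\|_{B^\alpha_{p,q}}$ relating the linear Besov norm on $\bigoplus_{i=1}^{[\frac1\alpha]}(\R^d)^{\otimes i}$ to the non-linear one on $G^{[\frac1\alpha]}(\R^d)$, since the whole argument silently uses that $\mathcal{A}_B(X)$, while defined via the group-valued Besov topology, really does sit inside the linear Banach space. This is the analogue of the ball-box estimate $|\X|_{W^\alpha_p}\lesssim\|\X\|_{W^\alpha_p}$ used in the proof of Theorem~\ref{thm:existence and uniqueness of optimal extension}: by \cite[Proposition~7.49]{Friz2010} one has $|\pi_i(\X_{s,t})|^{1/i}\lesssim d_{cc}(\X_s,\X_t)$ uniformly on bounded sets, with constant depending only on $\sup_t\|\X_t\|_{cc}$, and this pointwise domination of increments again survives any $(\ell^p_x,\ell^q_\lambda)$-aggregation. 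The same remark shows that a sublevel set $\{\overline F\le a\}$ is bounded in $|\cdot|_{B^\alpha_{p,q}}$ once $F$ dominates (a constant times a power of) the group-valued Besov norm, which is how coercivity of $\overline F$ is meant to be verified in applications; so no extra work beyond the Sobolev case is required.

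Finally I would note that, exactly as in Section~\ref{sec:optimal lift}, a sufficient supply of such functionals $\overline F$ is guaranteed because Fatou's lemma (together with the Besov embedding into $\frac1\alpha$-variation, \cite[Theorem~2]{Friz2006}, which yields pointwise convergence of increments along a $B^\alpha_{p,q}$-convergent sequence) gives lower semi-continuity, and the uniform convexity of $\ell^q$ and $L^p$ for $q\in(1,\infty)$, $p\in(1,\infty)$ gives strict convexity of the inhomogeneous Besov norm on $\mathcal{A}_B(X)$, so that the inhomogeneous Besov norm is a concrete admissible selection criterion --- the Besov analogue of Proposition~\ref{prop:optimization of inhomogeneous Sobolev norm} and Corollary~\ref{cor:optimal lift of 2-levels paths}. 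I would keep the proof to a single paragraph in the paper: ``Following verbatim the proofs of Lemma~\ref{lem:admissible set is convex} and Theorem~\ref{thm:existence and uniqueness of optimal extension}, replacing $W^\alpha_p$ by $B^\alpha_{p,q}$ and using that $B^\alpha_{p,q}$ is reflexive for $p,q\in(1,+\infty)$, the claim follows from \cite[Theorem~2.5.1]{Zualinescu2002}.''
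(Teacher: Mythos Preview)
Your proposal is correct and follows exactly the approach the paper takes: the paper's own proof consists of the single sentence ``Following the proof of Lemma~\ref{lem:admissible set is convex} and Theorem~\ref{thm:existence and uniqueness of optimal extension}, we deduce without difficulties,'' which is precisely the strategy you spell out in detail. Your explicit observation that reflexivity of $B^\alpha_{p,q}$ requires $p,q\in(1,+\infty)$ is a useful clarification the paper leaves implicit.
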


\medskip

\noindent{\bf Data availability statement:} Data sharing is not applicable to this article as no datasets were generated or analysed during the current study.


\providecommand{\bysame}{\leavevmode\hbox to3em{\hrulefill}\thinspace}
\providecommand{\MR}{\relax\ifhmode\unskip\space\fi MR }
\providecommand{\MRhref}[2]{%
  \href{http://www.ams.org/mathscinet-getitem?mr=#1}{#2}
}
\providecommand{\href}[2]{#2}

\end{document}